\numberwithin{equation}{section}
\newtheorem{theorem}{Theorem}
\newtheorem{thm}[theorem]{Theorem}
\newtheorem{cor}[theorem]{Corollary}
\newtheorem{lemma}[theorem]{Lemma}
\newtheorem{prop}[theorem]{Proposition}
\theoremstyle{definition}
\newtheorem{defi}[theorem]{Definition}
\newtheorem{problem}[theorem]{Problem}
\theoremstyle{remark}
\newtheorem{remark}[theorem]{Remark}
\newtheorem{example}[theorem]{Example}
\newtheorem{claim}{Claim}
\newtheorem*{acknowledgments}{Acknowledgments}
\numberwithin{theorem}{section}
\newcounter{my_enumerate_counter1}
\newcommand{\pushcounterx}{\setcounter{my_enumerate_counter1}{\value{enumi}}}
\newcounter{my_enumerate_counter}
\newcommand{\pushcounter}{\setcounter{my_enumerate_counter}{\value{enumi}}}
\newcommand{\popcounter}{\setcounter{enumi}{\value{my_enumerate_counter}}}
\newcommand{\twom}{\{-1,1\}^\mu}
\newcommand{\e}{\eps}
\newcommand{\cstar}{$\mathrm{C}^*$}
\newcommand{\cst}{\mathrm{C}^*}
\newcommand{\cstu}{\mathrm{C}^*_u}
\DeclareMathOperator{\Ad}{Ad}
\DeclareMathOperator{\supp}{supp}
\DeclareMathOperator{\Id}{Id}
\DeclareMathOperator{\cov}{cov}
\newcommand{\cZ}{\mathcal{Z}}
\newcommand{\cY}{\mathcal Y} 
\newcommand{\sfP}{\mathsf P}
\newcommand{\sfR}{\mathsf R}
\newcommand {\D}{\mathbb D}
\newcommand {\C}{\mathbb C}
\newcommand{\eps}{\varepsilon}
\newcommand{\cB}{\mathcal{B}}
\newcommand{\cF}{\mathcal{F}}
\newcommand{\cP}{\mathcal{P}}
\newcommand{\cN}{\mathcal{N}}
\newcommand{\cE}{\mathcal{E}}
\newcommand{\cU}{\mathcal{U}}
\newcommand{\fX}{\mathfrak{X}}
\newcommand{\fE}{\mathfrak{E}}
\newcommand{\N}{\mathbb{N}}
\newcommand{\R}{\mathbb{R}}
\newcommand{\Z}{\mathbb{Z}}
\newcommand{\tn}{{\vert\kern-0.25ex\vert\kern-0.25ex\vert}}
\newcommand{\Tn}{{\big\vert\kern-0.25ex\big\vert\kern-0.25ex\big\vert}}    
\newcommand{\TN}{{\Big\vert\kern-0.25ex\Big\vert\kern-0.25ex\Big\vert}} 
\begin{document}

\title[On the rigidity of uniform Roe algebras ]{On the rigidity of uniform Roe algebras over uniformly locally finite coarse spaces}
 
 \keywords{bounded geometry, coarse structure, uniform Roe algebra, uniformly discrete, locally finite.}
\author{B. M. Braga and  I. Farah }
\address{Department of Mathematics and Statistics\\
 York University\\
4700 Keele St.\\ Toronto, Ontario, M3J IP3\\
Canada}\email{demendoncabraga@gmail.com}
\address{Department of Mathematics and Statistics\\
 York University\\
4700 Keele St.\\ Toronto, Ontario, M3J IP3\\
Canada}
\email{ifarah@mathstat.yorku.ca}
\thanks{BMB was supported by York Science Research Fellowship. Both authors were partially supported 
  by IF's NSERC grant}
\date{\today}
\maketitle

\begin{abstract}
Given a coarse space $(X,\mathcal{E})$, one can define a \cstar-algebra $\cstu(X)$ called the uniform Roe algebra of $(X,\mathcal{E})$. It has been proved by J. \v{S}pakula and R. Willett that if the uniform Roe algebras of two uniformly locally finite metric spaces with property A are isomorphic, then the metric spaces are coarsely equivalent to each other. In this paper, we look at the problem of generalizing this result for general coarse spaces and on weakening the hypothesis of the spaces having property A. 
\end{abstract}

\setcounter{tocdepth}{1}
\tableofcontents
\section{Introduction}
The  concept of coarse spaces generalizes the idea  of metric spaces  and gives us the appropriate framework to study    large-scale geometry. In a nutshell,  a coarse space  consists of a pair  $(X,\cE)$, where $X$ is a set and $\cE$ is a family of subsets of $X\times X$ which measures `boundedness' in $X$  (we refer the reader to Section \ref{SectionBackground}  for precise definitions of the terminology used in this introduction). A (connected) coarse space $(X,\cE)$ happens to be metrizable exactly when its coarse structure is generated by $\aleph_0$  subsets \cite[Theorem 2.55]{RoeBook}. 

Given a uniformly locally finite coarse space $(X,\cE)$, one can define a  $^*$-subalgebra  $\cstu[X]$ of $\cB(\ell_2(X)$ and its norm-closure, a \cstar-algebra $\cstu(X)$, called the \emph{algebraic uniform Roe algebra of $(X,\cE)$} and \emph{uniform Roe algebra of $(X,\cE)$}, respectively. These algebras  are named after J. Roe, who introduced a version of them  in his study of   index theory of elliptical operators
on noncompact manifolds \cite{Roe1988,Roe1993}. An important  motivation to study  this \cstar -algebra comes from its intrinsic relation with the coarse Baum-Connes conjecture and, as a consequence, to the Novikov conjecture (see \cite{Yu2000}).\footnote{Recently,  uniform
Roe algebras   and its K-theory have also been
used  in mathematical physics to study the classification of
topological phases and the topology of quantum systems \cite{Kubota2017,EwertMeyer2019}.} More to the point, the rigidity problems considered in this paper are directly concerned with the Baum--Connes conjecture (see the discussion in  \cite[Section~1.2]{SpakulaWillett2013}). 
Many coarse properties of $(X,\cE)$ reflect on \cstar-algebraic properties of $\cstu(X)$ and vice 
versa (see \cite{LiWillett2017}, \cite{Sako2013}, and \cite{WinterZacharias2010}). However, 
the rigidity question---whether the uniform Roe algebra completely determines the coarse 
structure of the coarse space---remains open.

An  isomorphism $\Phi\colon \cstu(X)\to \cstu(Y)$ is \emph{spatially implemented} 
if there exists  a unitary $U\colon \ell_2(X)\to \ell_2(Y)$ such that  $\Phi=\Ad U$ (where $\Ad U (T)=UTU^*$). 
In this case we say that $\cst(X)$ and $\cst(Y)$ are \emph{spatially isomorphic}. 
If in addition there exists $\e>0$ such that 
\[
\inf_{x\in X}\sup_{y\in Y}|\langle U\delta_x,\delta_y\rangle|>0\ \ \text{and}\ \ \inf_{y\in Y}\sup_{x\in X}|\langle U^*\delta_y,\delta_x\rangle|>0
\]
then we say that $\Phi$ is a  \emph{rigid    isomorphism} between $\cstu(X)$ and $\cstu(Y)$ 
and  that $\cstu(X)$ and $\cstu(Y)$ are \emph{rigidly    isomorphic}. 
Analogous terminology applies to the case when $\Phi$ is an isomorphism 
between algebraic Roe algebras $\cstu[X]$ and $\cstu[Y]$. 

We study the following relations between  coarse spaces $(X,\cE)$ and $(Y,\cF)$. 
\begin{enumerate}[(I)]
\item \label{I.I} $(X,\cE)$ and $(Y,\cF)$ are  coarsely equivalent.
\item \label{I.II} $(X,\cE)$ and $(Y,\cF)$ are bijectively coarsely equivalent.
\item \label{I.IsoAlg} $\cstu[X]$ and $\cstu[Y]$ are    isomorphic.
\item \label{I.IsoDiag} There is an isomorphism  $\cstu(X)\to\cstu(Y)$  taking $\ell_\infty(X)$ 
to $\ell_\infty(Y)$. 
\item \label{I.IsoRig} $\cstu(X)$ and $\cstu(Y)$ are rigidly    isomorphic.
\item \label{I.IsoSpa} $\cstu(X)$ and $\cstu(Y)$ are spatially    isomorphic.
\item \label{I.Iso} $\cstu(X)$ and $\cstu(Y)$ are    isomorphic. 
\pushcounterx
\end{enumerate}
 Notice that, although  the implication \eqref{I.II}  $\Rightarrow$  \eqref{I.I} is trivial, \eqref{I.I}  in general does not imply any of the other properties (for example if $X$ and $Y$ are connected and 
finite coarse spaces of different cardinalities, or if $X$ and $Y$ are $\R$ and $\Z$ with their standard metrizable coarse structures). This paper revolves around the following question.

\begin{problem}
Let $(X,\cE)$ and $(Y,\cF)$ be uniformly locally finite coarse spaces. Do all properties above imply \eqref{I.I}? Moreover, do we have that \[\eqref{I.II}\Leftrightarrow \eqref{I.IsoAlg} \Leftrightarrow\eqref{I.IsoDiag} \Leftrightarrow\eqref{I.IsoRig}\Leftrightarrow\eqref{I.IsoSpa}\Leftrightarrow\eqref{I.Iso}?\]
\end{problem}

The implications \eqref{I.IsoRig} $\Rightarrow$ \eqref{I.IsoSpa} $\Rightarrow$ \eqref{I.Iso} and \eqref{I.IsoAlg} $\Rightarrow$  \eqref{I.Iso} are trivial  and 
the implications  \eqref{I.II}  $\Rightarrow$   \eqref{I.IsoAlg}  and \eqref{I.II}  $\Rightarrow$   \eqref{I.IsoDiag}  are quite straightforward. We list below what is known regarding the remaining implications. 
 
\begin{enumerate} 
\item \label{SpatiallyImplemented} 
Properties \eqref{I.IsoSpa} and \eqref{I.Iso} are  equivalent. 
More precisely, every isomorphism as in \eqref{I.IsoAlg} or \eqref{I.Iso} is spatially implemented  (see   \cite{SpakulaWillett2013}, Lemma 3.1, or    Lemma \ref{LemmaSpakulaWillett} below). 
\item For  uniformly locally finite metric  spaces with property A,  
\eqref{I.Iso} implies \eqref{I.I} (\cite{SpakulaWillett2013}, Theorem 4.1). 
 \item   Properties \eqref{I.Iso} and  \eqref{I.IsoRig} are equivalent for  uniformly locally finite metric spaces
 with property A.  Moreover, for  uniformly locally finite metric spaces
 with property A, 
  \emph{every} isomorphism between their uniform Roe algebras is a rigid    isomorphism (\cite{SpakulaWillett2013}, Lemma 4.6). 
  
\item Properties \eqref{I.II} and \eqref{I.IsoDiag} are equivalent  for   uniformly locally finite  metric spaces (\cite{WhiteWillett2017}, Corollary 6.13).
\item  Properties \eqref{I.II},  \eqref{I.IsoAlg}, \eqref{I.IsoDiag},  \eqref{I.IsoRig}, \eqref{I.IsoSpa}, and \eqref{I.Iso} are all equivalent in the following situations. 
\begin{enumerate}
\item for  uniformly locally finite metric spaces with property A  (\cite{WhiteWillett2017}, Corollary 6.13), 
\item for  countable locally finite groups with proper left-invariant metrics 
(\cite[Theorem~1.1]{LiLiao2017}; the standard Cayley graph metric of a finitely generated group is proper and left-invariant). 
\end{enumerate}
\item Property \eqref{I.I} implies that $\cstu(X)$ and $\cstu(Y)$ are Morita 
equivalent (this was proved for metric spaces in  \cite[Theorem~4]{brodzki2007property}, but the proof clearly translates to   coarse spaces; see also   \cite[Proposition 3.10]{ChungLi2018}). 
  \pushcounter
\end{enumerate}

Recall, an operator $T\in \cstu(X)$ is called a \emph{ghost} 
 if $\langle T\delta_x,\delta_{x'}\rangle\to 0$, as $(x,x')\to \infty$ on $X\times X$ (see Definition \ref{DefiGhost} for details). 
 A uniformly locally  finite metric space $(X,d)$ has property A if and only if all ghost operators in $\cstu(X)$ are compact (see Proposition~11.43 of \cite{RoeBook} and Theorem 1.3 of \cite{RoeWillett2014}).\footnote{We take the advantage
of this fact and use it as an excuse not to give the definition of property A; let's just say that A appears to stand
 for `amenability'.}
  Since  there exist uniformly locally finite metric spaces with non-compact ghost operators in which 
  all ghost projections are compact (see \cite{RoeWillett2014}, Theorem 1.4), the property of a uniformly locally finite metric space having only compact ghost projections is strictly weaker than property~A. Also, it was proved in \cite{Yu2000}, Theorem 2.7, that having property~A implies coarse embeddability into a Hilbert space. 
  
 We now describe the main results of this paper.
  
\begin{enumerate}
\popcounter
\item \label{ThmMAIN} Properties    \eqref{I.II}, \eqref{I.IsoAlg}, and \eqref{I.IsoDiag}  are equivalent for all uniformly locally finite coarse spaces (Theorem \ref{T.ThmMAIN}). 
\item 
\label{ThmRigIsoImpliesCoarseEqui}
\eqref{I.IsoRig} implies \eqref{I.I} for 
 uniformly locally finite metric spaces (Theorem~\ref{T.ThmRigIsoImpliesCoarseEqui}). 
\item \label{ThmRigIsoImpliesCoarseEquiMARTIN}
More generally, it is consistent with ZFC that \eqref{I.IsoRig} implies \eqref{I.I} for 
 uniformly locally finite coarse spaces whose coarse structures are generated by less than $2^{\aleph_0}$ subsets
 (see Theorem~\ref{T.ThmRigIsoImpliesCoarseEqui} for a stronger result).
\item \label{ThmIsoImpliesRigIso}\label{I.CorHilbertEmbRigidity}
\eqref{I.Iso} implies  \eqref{I.IsoRig} for 
uniformly locally finite metric spaces  which coarsely embed into a Hilbert space
(Theorem~\ref{T.CorHilbertEmbRigidity}). 
\item \label{ThmIsoImpliesRigIsoGhost}
\eqref{I.Iso} implies  \eqref{I.IsoRig} for  
uniformly locally finite metric spaces 
in which all the  ghost projections are compact
(Theorem~\ref{C.ThmIsoImpliesRigIsoGhost}). 
 More precisely, if an isomorphism is not 
rigidly implemented, then at least one of the uniform Roe algebras  
has a Cartan masa which contains non-compact ghost projections (Theorem~\ref{T.ThmIsoImpliesRigIsoGhost}). 
\end{enumerate}

By \eqref{I.CorHilbertEmbRigidity} and \eqref{ThmRigIsoImpliesCoarseEqui}, we have the following. 

\begin{cor}\label{CorHilbertEmbRigidity}
Let $(X,d)$ and $(Y,\partial)$ be
  uniformly locally finite metric spaces which coarsely embed into a Hilbert space.   If $\cstu(X)$ and $\cstu(Y)$ are    isomorphic, then $(X,d)$ and $(Y,\partial)$ are coarsely equivalent. \qed
\end{cor}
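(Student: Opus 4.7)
The plan is to deduce the corollary as an immediate concatenation of the two results cited just above it, so no new argument is needed beyond stringing the implications together in the right order.

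First I would note that by assumption we are in the situation where $(X,d)$ and $(Y,\partial)$ are uniformly locally finite metric spaces that coarsely embed into a Hilbert space, and where $\cstu(X)$ and $\cstu(Y)$ are isomorphic. This is exactly the hypothesis of item~\eqref{I.CorHilbertEmbRigidity} (Theorem~\ref{T.CorHilbertEmbRigidity}), applied to an arbitrary isomorphism $\Phi\colon \cstu(X)\to\cstu(Y)$. That theorem upgrades the bare isomorphism \eqref{I.Iso} to a rigid isomorphism \eqref{I.IsoRig}: there exists a unitary $U\colon \ell_2(X)\to\ell_2(Y)$ with $\Phi=\Ad U$ satisfying the two-sided rigidity estimate
\[
\inf_{x\in X}\sup_{y\in Y}|\langle U\delta_x,\delta_y\rangle|>0 \quad\text{and}\quad \inf_{y\in Y}\sup_{x\in X}|\langle U^*\delta_y,\delta_x\rangle|>0.
\]

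Next I would invoke \eqref{ThmRigIsoImpliesCoarseEqui} (Theorem~\ref{T.ThmRigIsoImpliesCoarseEqui}) for uniformly locally finite metric spaces, which asserts precisely that \eqref{I.IsoRig} implies \eqref{I.I}. Applying it to the rigid isomorphism produced in the previous step yields that $(X,d)$ and $(Y,\partial)$ are coarsely equivalent, which is the desired conclusion.

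There is essentially no obstacle to this step, since both cited theorems apply directly under the stated hypotheses; the role of the coarse embeddability into Hilbert space is confined to Theorem~\ref{T.CorHilbertEmbRigidity}, and once rigidity of the isomorphism is obtained, the metric hypothesis (without any further regularity such as property A) is enough to pass to coarse equivalence via Theorem~\ref{T.ThmRigIsoImpliesCoarseEqui}. Thus the proof is merely the composition \eqref{I.Iso}$\Rightarrow$\eqref{I.IsoRig}$\Rightarrow$\eqref{I.I}.
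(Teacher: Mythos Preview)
Your proposal is correct and matches the paper's approach exactly: the corollary is stated with a \qed and is deduced immediately from items \eqref{I.CorHilbertEmbRigidity} and \eqref{ThmRigIsoImpliesCoarseEqui}, i.e., Theorem~\ref{T.CorHilbertEmbRigidity} followed by Theorem~\ref{T.ThmRigIsoImpliesCoarseEqui}. The only implicit point worth making explicit is that metrizable coarse spaces are automatically small (by the Baire category theorem, as noted after Definition~\ref{Def.Small}), so the hypotheses of Theorem~\ref{T.ThmRigIsoImpliesCoarseEqui} are indeed met.
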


By \eqref{ThmIsoImpliesRigIsoGhost} and \eqref{ThmRigIsoImpliesCoarseEqui}, we have the following. 

\begin{cor}
Let $(X,d)$ and $(Y,\partial)$ be
  uniformly locally finite metric spaces such  that all the  ghost projections in $\cstu(X)$ and $\cstu(Y)$ are compact. If $\cstu(X)$ and $\cstu(Y)$ are    isomorphic, then $(X,d)$ and $(Y,\partial)$ are coarsely equivalent. \qed
\end{cor}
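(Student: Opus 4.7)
The plan is to obtain the corollary as a direct two-step consequence of two results that have already been stated in the introduction, namely item (8) (Theorems~\ref{T.ThmIsoImpliesRigIsoGhost} and \ref{C.ThmIsoImpliesRigIsoGhost}) and item (7) (Theorem~\ref{T.ThmRigIsoImpliesCoarseEqui}). No substantive new argument is needed at this point; the work has been pushed into those two theorems, and what remains is simply to chain the implications \eqref{I.Iso}\,$\Rightarrow$\,\eqref{I.IsoRig}\,$\Rightarrow$\,\eqref{I.I}.

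Concretely, I would start by fixing an isomorphism $\Phi\colon \cstu(X)\to \cstu(Y)$. To upgrade $\Phi$ to a rigid isomorphism, I would invoke Theorem~\ref{T.ThmIsoImpliesRigIsoGhost} in contrapositive form: it tells us that if $\Phi$ failed to be rigid, then at least one of the uniform Roe algebras $\cstu(X)$, $\cstu(Y)$ would have to contain a Cartan masa harboring a non-compact ghost projection. Since the hypothesis of the corollary rules this out on both sides, every such $\Phi$ must in fact be rigid.

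Having produced a rigid isomorphism between $\cstu(X)$ and $\cstu(Y)$, I would then apply Theorem~\ref{T.ThmRigIsoImpliesCoarseEqui}, which asserts that any rigid isomorphism between the uniform Roe algebras of uniformly locally finite metric spaces implies that the underlying spaces are coarsely equivalent. This yields the conclusion that $(X,d)$ and $(Y,\partial)$ are coarsely equivalent, completing the proof.

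Since both steps are black-box citations of earlier theorems, there is essentially no obstacle \emph{at this stage}; the real content lies in those two theorems. If I had to name the more delicate of the two inputs, it would be Theorem~\ref{T.ThmIsoImpliesRigIsoGhost}, because detecting rigidity of an arbitrary $\cst$-algebra isomorphism through the compactness of ghost projections in Cartan masas is the genuinely non-trivial ingredient; the passage from rigid isomorphism to coarse equivalence is comparatively softer, following the template already developed for the metric, property-A case.
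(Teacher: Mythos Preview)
Your proposal is correct and follows exactly the same two-step argument as the paper: invoke Theorem~\ref{T.ThmIsoImpliesRigIsoGhost} (or \ref{C.ThmIsoImpliesRigIsoGhost}) to upgrade the isomorphism to a rigid one under the ghost-projection hypothesis, then apply Theorem~\ref{T.ThmRigIsoImpliesCoarseEqui} to conclude coarse equivalence. The only quibble is that your item numbers are off by one relative to the paper's enumeration, but since you cite the theorems by their correct labels this is immaterial.
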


We do not know whether a Cartan masa  can contain a non-compact  ghost projection (see the conclusion of \eqref{ThmIsoImpliesRigIsoGhost}); see however Example~\ref{Ex.Ghost}.

A uniformly locally finite metric space which coarsely embeds into a Hilbert space but  does not have property A was constructed in \cite{ArzhantsevaGuentnerSpakula2012}, Theorem 1.1. Although this space does not have property A, all ghost projections in $\cstu(X)$ are compact (see \cite{RoeWillett2014}, Theorem 1.4). In particular, it follows from Theorem 1.1 of \cite{Yu2000} that this space satisfies the coarse Baum--Connes conjecture.

Our main technical contribution is that for uniformly locally finite metric spaces $X$ and $Y$, any isomorphism $\cstu(Y)\to\cstu(X)$ must satisfy a certain kind of automatic uniform approximation for ``banded'' operators. We believe that this is likely to have other applications,  especially to 
banded matrices in operator theory and mathematical physics. We proceed to give an informal description of this result,  and  refer to Definition \ref{Def.CoarseLike} and Theorem \ref{T.Approximation} for precise statements and generalization for nonmetrizable spaces. Let $(X,\cE)$ and $(Y,\cF)$ be metrizable uniformly locally finite coarse spaces. 
We prove that any    isomorphism $\cstu(Y)\to \cstu(X)$ 
must satisfy the following  `coarse-like' property: roughly  speaking, given such    isomorphism, we show that for every $\eps>0$ there exists an assignment $F\in\cF\mapsto E_F\in \cE$ so  that this    isomorphism takes operators   supported in $F$ to operators supported in $E_F$ up to a error of $\eps$.  In the case of the algebraic uniform Roe algebra, the error may be taken to be zero and the metrizability assumption can be omitted (Theorem~\ref{ThmAlgebraicMain}).

This paper is organized as follows.  In Section \ref{SectionBackground}, we present all the necessary 
definitions and background for these notes. Section \ref{SectionSpatially} is dedicated to show that Lemma 3.1 of \cite{SpakulaWillett2013} can be generalized to non-connected  coarse spaces. 
In Section \ref{SectionIsoRoeAlg}, we prove the `coarse-like' property mentioned above which gives us (\ref{ThmRigIsoImpliesCoarseEqui}) and (\ref{ThmRigIsoImpliesCoarseEquiMARTIN}). We can also prove rigidity of uniform Roe algebras for a subclass  of uniformly locally finite coarse spaces which we call  spaces with  \emph{small partitions}. This is done in Section~\ref{SubsectionUnformRoeAlgDisjointUnion} and  we refer the reader to Definition \ref{DefinitionUniformlyMetrizable}  and Theorem \ref{ThmPartitionUniformlyMetrizable} for precise statements. In Section~\ref{SectionGhostlyMasas}, we look at Cartan masas (Definition \ref{DefiCartanMasa}) and show (\ref{ThmIsoImpliesRigIsoGhost}) above. 
 In Section \ref{SectionRigidityHilbert}, we show that rigidity holds for metric spaces which coarsely embed into a Hilbert space ((\ref{ThmIsoImpliesRigIso}) above) and, in Section \ref{SectionIsoAlgebraicRoeAlg}, we deal with rigidity of algebraic uniform Roe algebras and prove (\ref{ThmMAIN}).

\begin{remark}
The results and techniques introduced in this paper have already been successfully applied to a variety of loosely related subjects:   
rigidity for isomorphisms between the quotients of uniform Roe algebras 
by the compact operators (\cite{BragaFarahVignati2018}), embeddings between uniform Roe algebras (\cite{BragaFarahVignati2019}), Banach algebra isomorphisms between uniform Roe algebras  (\cite{BragaVignati2019}), rigidity results for metric spaces satisfying the coarse Baum--Connes conjecture with coefficients  (\cite{BragaChungLi2019}), and Cartan masas of $\cstu(X)$ (\cite{WhiteWillett2017}).
\end{remark}

\section{Background}\label{SectionBackground}

We start this section by giving the basic definitions about coarse spaces. For a detailed account of that,   we refer the reader to \cite{RoeBook} and \cite{RosendalBook}.

\subsection{Coarse spaces}\label{SubsectionCoarseSpaces}
%
 Let $X$ be a set. Given subsets $E,F\subset X\times X$, we define
\[
E^{-1}=\{(x,y)\in X\times X\mid (y,x)\in E\}
\]
and
\[E\circ F=\{(x,y)\in X\times X\mid \exists z\in X\ \ \text{with}\  \ (x,z)\in E\ \ \text{and}\ \ (z,y)\in F\}.\]
We say that $E$ is \emph{symmetric} if $E=E^{-1}$. For each $n\in\N$, define $E^{(n)}$ recursively as follows. Let $E^{(1)}=E$ and $E^{(n+1)}=E\circ E^{(n)}$, for all $n\geq 1$.  

\begin{defi}\label{DefiCoarseSpace}
Let $X$ be a set. 
A collection $\cE$ of subsets of $X\times X$ is called  a \emph{coarse structure} if
\begin{enumerate}[(i)]
\item $\Delta_X\coloneqq \{(x,x)\in X \times X\mid x\in X\}\in \mathcal{E}$,
\item $E\in\mathcal{E}$ and $F\subset E$ implies $F\in\mathcal{E}$,
\item $E\in \mathcal{E}$ implies $E^{-1}\in\mathcal{E}$, 
\item $E,F\in\mathcal{E}$ implies $E\cup F\in\mathcal{E}$, and 
\item $E,F\in \mathcal{E}$ implies $E\circ F\in\mathcal{E}$.
\end{enumerate}
The elements of $\mathcal{E}$ are called \emph{entourages} and the pair $(X,\mathcal{E})$  is called a \emph{coarse space}.
\end{defi}

Let $X$ be a set, $\cE\subset \cP(X\times X)$ and $A\subset X$ be a subset. We define  
\[\mathcal{E}_A=\{E\cap A\times A\mid E\in\mathcal{E}\}.\]
If $\cE$ is a coarse structure on $X$, then $\cE_A$ defines a coarse structure on $A$. A coarse space $(X,\cE)$ is called \emph{connected} if $\{(x,y)\}\in\cE$, for all $x,y\in X$. Since $\{(x,y)\}\in\mathcal{E}$ defines an equivalence relation on $X$, we can always write $X=\bigsqcup_{j\in J}X_j$, where each $(X_j,\mathcal{E}_{X_j})$ is connected and $X_j\cap X_i=\emptyset$, for all $j\neq i$. The subsets $(X_j)_{j\in J}$ are called the \emph{connected components of $X$}.

 Given a set $X$ and a family of subsets $\{E_i\}_{i\in I}\subset \mathcal{P}(X\times X)$, the intersection of all the coarse structures on $X$ containing the family $\{E_i\}_{i\in I}$, say $\cE$, is still a coarse structure and it is called the \emph{coarse structure generated by $\{E_i\}_{i\in I}$}. The family $\{E_i\}_{i\in I}$ is called a set of \emph{generators} of  $\cE$. We say that a coarse structure $\mathcal{E}$ on $X$ is \emph{countably generated} if it is generated by a countable family of subsets of $X\times X$.

Let $(X,d)$ be a metric space. For each $r\geq 0$, let 
\[E_r=\{(x,y)\in X\times X\mid d(x,y)\leq r\}.\]
We call the  the coarse structure generated by $\{E_r\}_{r\geq 0}$ the \emph{bounded coarse structure of $(X,d)$} and we denote it by $\mathcal{E}_d$. Clearly, $\mathcal{E}_d$ is countably generated. 

A coarse space $(X,\mathcal{E})$ is \emph{metrizable} if there exists some metric $d$ on $X$ such that $\mathcal{E}$ is the bounded coarse structure of $(X,d)$. 
A connected coarse structure $(X,\mathcal{E})$ is metrizable if and only if $\mathcal{E}$ is countably generated
(see \cite{RoeBook}, Theorem 2.55).

For $E\subset X\times X$ and $x\in X$ let  
\[
E_x=\{y\in X\mid (x,y)\in E\}\ \ \text{and}\ \ E^x=\{y\in X\mid (y,x)\in E\}.
\]
\begin{defi}
A coarse space $(X,\mathcal{E})$ is called \emph{uniformly locally finite} if  
\[\sup_{x\in X} |E_x|<\infty,\]
for all $E\in\mathcal{E}$. If  $(X,d)$ is a metric space and $\cE_d$ is its bounded coarse structure, we say that $(X,d)$ is a \emph{uniformly locally finite metric space} if $(X,\cE_d)$ is a uniformly locally finite coarse space.
\end{defi}

\begin{remark}
We notice that a uniformly locally finite metric space $(X,d)$ is usually called a \emph{metric space with bounded geometry} in the literature. However, the common definition of bounded geometry  for general coarse spaces (see \cite{RoeBook}, Chapter 3, Section 3.1) is not the generalization we need for these notes. Precisely, what we need is the idea of \emph{uniformly discrete coarse spaces} defined in \cite{RoeBook}, Definition 3.24. Since the terminology \emph{uniformly locally finite}  is also used by other authors (e.g.,  \cite{Sako2012}, \cite{Sako2013} and \cite{RosendalBook}), we chose to use this less common terminology (even for metric spaces). 
\end{remark}

Let $(X,\mathcal{E})$ be a coarse space and let $E\in\mathcal{E}$. A subset $X'$ of $X$ is called \emph{$E$-separated} if $(x,y)\not\in E$, for all distinct $x,y\in X'$. The following
lemma is a reformulation of  Lemma 2.7(a), of \cite{SkandalisTuYu2002}. For the convenience of the reader, we include a proof.

\begin{prop}\label{partition}
Let $(X,\mathcal{E})$ be a coarse structure. Let $E\in\mathcal{E}$ be such that  $n\coloneqq \sup_{x\in X}|E_x|< \infty$. Then there exists a partition 
\[X=X_1\sqcup\ldots\sqcup X_n,\]
such that $X_i$ is $E$-separated, for all $i\in \{1,\ldots,n\}$. In particular,  if $(X,\mathcal{E})$ is a uniformly locally finite  coarse space, then such partition exists for all $E\in\mathcal{E}$.
\end{prop}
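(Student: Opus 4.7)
My plan is to prove the proposition by greedy graph-colouring carried out by transfinite recursion (equivalently, via Zorn's lemma). The first step is a harmless reduction: replace $E$ by $\tilde E \coloneqq E \cup E^{-1} \cup \Delta_X$, which again lies in $\mathcal{E}$ by the axioms of Definition \ref{DefiCoarseSpace}. A subset is $E$-separated if and only if it is $\tilde E$-separated (the separation condition already quantifies over both orderings of any pair), so it suffices to produce the partition with respect to the enlarged entourage. After this reduction I may and do assume $E$ is symmetric and reflexive; the supremum $\sup_x |E_x|$ may increase, but remains finite, which is all that matters for the ``in particular'' clause.

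Next, I well-order $X$ as $\{x_\alpha : \alpha < \kappa\}$ using the axiom of choice and recursively define a colouring $c \colon X \to \{1, \dots, n\}$ by setting $c(x_\alpha)$ to be the least index that differs from $c(x_\beta)$ for every $\beta < \alpha$ satisfying $x_\beta \in E_{x_\alpha}$. Reflexivity of $E$ yields $x_\alpha \in E_{x_\alpha}$, so $|E_{x_\alpha} \setminus \{x_\alpha\}| \leq n - 1$, bounding the number of forbidden colours at stage $\alpha$ and guaranteeing that a valid choice exists. The desired partition is then $X_i \coloneqq c^{-1}(i)$. Indeed, if $x_\alpha$ and $x_\beta$ are distinct points sharing the colour $i$ with $\beta < \alpha$, then by symmetry of $E$ either ordering $(x_\alpha, x_\beta)$ or $(x_\beta, x_\alpha)$ lying in $E$ would put $x_\beta \in E_{x_\alpha}\setminus\{x_\alpha\}$, contradicting the recursive choice of $c(x_\alpha) = c(x_\beta) = i$.

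The main obstacle is the bookkeeping in the recursive step: the bound $|E_x| \leq n$ is too weak on its own to run a greedy $n$-colouring, since the ``conflict graph'' (undirected edge between $x$ and $y$ when either $(x,y) \in E$ or $(y,x) \in E$) could a priori have vertices of degree exceeding $n$ once both directions are counted. The symmetry-and-reflexivity reduction of the first paragraph is precisely what turns the hypothesis into the strict degree bound $|E_x \setminus \{x\}| \leq n - 1$ required by the greedy argument. The ``in particular'' clause is then immediate: uniform local finiteness ensures $\sup_x |\tilde E_x| < \infty$ for every $E \in \mathcal{E}$, so the main statement applies to the enlarged entourage and yields the desired partition.
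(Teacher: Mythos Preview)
Your proof is correct and takes a genuinely different route from the paper's. The paper argues by iteratively extracting maximal $E$-separated subsets via Zorn's lemma: let $X_1$ be a maximal $E$-separated subset of $X$, then $X_2$ a maximal $E$-separated subset of $X\setminus X_1$, and so on; if the process has not exhausted $X$ after $n$ steps, any point $x\in X_{n+1}$ must, by maximality, be $E$-related to some point in each of $X_1,\ldots,X_n$, forcing $|E_x|\geq n+1$. Your greedy-colouring argument is the graph-theoretic dual: rather than building the colour classes one at a time, you process the vertices one at a time along a well-ordering and assign each the least available colour. Both arguments are standard, and both require the same symmetry-and-reflexivity reduction at the outset.

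You are in fact more careful than the paper in flagging that this reduction may increase the bound $n$, so that the literal statement---a partition into \emph{exactly} $\sup_x|E_x|$ pieces for an arbitrary $E$---need not hold (for instance, a directed $3$-cycle with loops has $n=2$ but admits no partition into two $E$-separated pieces). The paper simply writes ``without loss of generality'' without comment. For the ``in particular'' clause, and for every application of the proposition later in the paper, only some finite bound is needed, so the point is harmless.
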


\begin{proof}
Fix $E\in\mathcal{E}$ as in the proposition. Without loss of generality, we assume that $E$ is symmetric and that $\Delta_X\subset E$. By Zorn's lemma, we can pick a maximal $E$-separated subset  $X_1\subset X$. Assume $X_i$ has being defined, for $i\geq 1$. If \[X\setminus \bigcup_{n=1}^iX_n\neq \emptyset,\] let $X_{i+1}\subset X\setminus \bigcup_{n=1}^iX_n$ be a maximal $E$-separated subset. If the proposition does not hold, this defines a finite sequence $(X_i)_{i=1}^{n+1}$ of nonempty pairwise disjoint   subsets such that, for all $i\leq n+1$, $X_{i}\subset X\setminus \bigcup_{n=1}^{i-1}X_n$ and $X_i$ is  a maximal $E$-separated subset of $X\setminus \bigcup_{n=1}^{i-1}X_n$. Pick $x\in X_{n+1}$. For each $i\leq n$, we can pick $x_i\in X_i$ such that $(x,x_i)\in E$. Therefore,
\[\{x\}\cup\{x_i\mid j\in\{1,\ldots,n\}\}\subset E_x.\]
This gives us that $|E_x|\geq n+1$;  contradiction.
\end{proof}

\begin{defi}
Let $(X,\mathcal{E})$ and $(Y,\mathcal{F})$ be  coarse spaces.  If $Z$ is any set, 
then maps   $f:Z\to X$ and $g:Z\to X$ are said to be  \emph{close} if
\[
\{(f(x),f(x'))\in X\times X\mid x,x'\in Z\}\in \mathcal{E}.
\]   
\begin{enumerate}[(i)]
\item A map  $f:X\to Y$  
 is called \emph{coarse} if for all $E\in\mathcal{E}$ there exists $F\in \mathcal{F}$ such that $(x,x')\in E$ implies $(f(x),f(x'))\in F$. 
\item A coarse map $f:X\to Y$  is a \emph{coarse embedding} if 
 for all $F\in \mathcal{F}$ there exists $E\in\mathcal{E}$ such that $(x,x')\not\in E$ implies $(f(x),f(x'))\not\in F$.
\item A coarse map $f:X\to Y$ 
 is a \emph{coarse equivalence} if  there exists a coarse map $g:Y\to X$ such that $g\circ f$ and $f\circ g$  are close to $\Id_X$ and $\Id_Y$, respectively.\footnote{Notice that a coarse equivalence is automatically a coarse embedding.}
 \end{enumerate}
\end{defi}
Equivalently, $f$ is a coarse equivalence if it is a coarse embedding and it is \emph{cobounded}, i.e., there exists $F\in\mathcal{F}$ such that 
\[Y=\{y\in Y\mid \exists x\in X,\  (f(x),y)\in F\}.\]

\subsection{Uniform Roe algebras}\label{SubsectionUnformRoeAlg}
We denote the algebra of bounded linear operators on a Hilbert space $H$   
 by $B(H)$. 

\begin{defi}
Let $(X,\mathcal{E})$ be a  coarse space. The \emph{algebraic uniform Roe algebra} $\cstu[X]$ is defined by setting
\[\cstu[X,\cE]=\Big\{T\in B(\ell_2(X))\mid \exists E\in\cE,\ \forall (x,x')\not\in E,\  \langle T\delta_x,\delta_{x'}\rangle=0\Big\},\]
and the \emph{uniform Roe algebra} $\cstu(X,\cE)$ is defined as the norm closure of $\cstu[X]$ in $B(\ell_2(X))$. Clearly,   $\cstu[X,\cE]$ is a    algebra and  $\cstu(X,\cE)$ is a \cstar -algebra. We omit $\cE$ whenever it is clear from the context and write $\cstu[X]$ and $\cstu(X)$ for $\cst[X,\cE]$ and $\cstu(X,\cE)$, 
respectively.   
\end{defi}

Since we will only work with uniformly locally finite coarse spaces $(X,\cE)$, it is worth noticing that, for any such $(X,\cE)$, the following holds. Let  $T=(T_{xy})_{x,y\in X}$ be a family of complex numbers satisfying 
\begin{enumerate}[(i)]
\item $\sup_{x,y\in X}|T_{xy}|< \infty$, and
\item there exists $E\in \mathcal{E}$ such that $T_{xy}=0$ if $(x,y)\not\in E$.
\end{enumerate}
The family $T$ naturally induces a bounded operator on $\ell_2(X)$, which  we still denote  by $T$. The set of all such operators coincides with $\cstu[X]$ and, for any such  $T=(T_{xy})_{x,y\in X}$, 
it holds that \[
\|T\|\leq \sup_{x\in X}|E^x|\cdot \sup\{|T_{xy}|\mid x,y\in X\},
\]
for $E\in \mathcal{E}$ as in (ii).

For a set  $X$ the Hilbert space $\ell_2(X)$ has  a  standard orthonormal 
 basis, which we denote by  $(\delta_x)_{x\in X}$. The \emph{support} of $T\in B(\ell_2(X))$ is 
\[
\supp(T)=\{(x,x')\in X\times X\mid \langle T\delta_x,\delta_{x'}\rangle\neq 0\}.
\]
By identifying  $\ell_\infty(X)$ with the subspace of all $T\in B(\ell_2(X))$ 
such that $\supp(T)\subseteq \Delta_X$ we have the inclusion $\ell_\infty(X)\subset \cstu(X)$.

The algebra $\ell_\infty(X)$ is a maximal abelian subalgebra (or shortly, masa) with special properties, 
captured by the following definition
 (\cite{WhiteWillett2017},  Proposition 3.1).

\begin{defi}\label{DefiCartanMasa}
A \cstar-subalgebra   
$B$ of  a \cstar -algebra 
$A$ is a \emph{Cartan masa} if
\begin{enumerate}[(i)]
\item $B$ is a maximal abelian self-adjoint subalgebra (i.e., a \emph{masa}) of $A$,
\item $B$ contains an approximate unit for $A$,
\item the \emph{normalizer} of $B$ in $A$ defined as 
\[\cN_A(B)=\{a\in A\mid aBa^*\cup a^*Ba\subset B\}\]
generates $A$ as a \cstar -algebra, and 
\item there is a faithful conditional expectation from $A$ onto  $B$.
\end{enumerate}
\end{defi}

Given $x,x'\in X$, we define an operator $e_{xx'}\in B(\ell_2(X))$ by letting
\[e_{xx'}(\delta_z)=\langle \delta_z, \delta_{x'}\rangle \delta_x,\]
for all $z\in Z$. For $A\subset X$, let $\chi_A=\sum_{x\in A} e_{xx}$. Clearly, $\chi_A$ belongs to $\ell_\infty(X)$. Given $T\in B(\ell_2(X))$ and $x,x'\in X$, we let 
\[T_{xx'}=e_{xx}Te_{x'x'}.\]
So,  if $\{(x,x')\}\in\mathcal{E}$, we have that $T_{xx'}\in \cstu[X]$. 

We state the following trivial lemma for future reference. 

\begin{lemma}\label{LemmaSOTConv}
Suppose $(X,\mathcal{E})$ is a uniformly locally finite coarse space and  
$(T_{j})_{j\in J}$ is  a uniformly bounded family of operators with disjoint supports 
 such that  $E=\bigcup_{j\in J} \supp(T_j)$ 
belongs to $\cE$. Then the series $\sum_{j\in J}T_J$ converges in the strong operator topology 
to an element in $\cstu[X]$ with support contained in~$E$. \qed
\end{lemma}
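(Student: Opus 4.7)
The plan is to leverage uniform local finiteness together with the disjointness of supports in order to reduce strong operator convergence of $\sum_{j\in J} T_j$ to a trivial eventual-constancy check on basis vectors, then bootstrap to arbitrary $\xi\in\ell_2(X)$ via a uniform norm bound on partial sums. Set $M := \sup_{j\in J} \|T_j\|$, and let $n := \max\bigl(\sup_{x\in X}|E_x|,\,\sup_{x\in X}|E^x|\bigr)$, which is finite because $\cE$ is closed under taking inverses. For any finite $F\subseteq J$, the partial sum $S_F := \sum_{j\in F} T_j$ has $\supp(S_F)\subseteq E$, and by disjointness of supports each matrix entry $\langle S_F\delta_x,\delta_{x'}\rangle$ is either zero or equals a single $\langle T_j\delta_x,\delta_{x'}\rangle$, hence has modulus at most $M$. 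The norm estimate recalled right after the definition of $\cstu[X]$ then gives $\|S_F\|\leq nM$ uniformly in $F$.

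Next, I would fix $x\in X$ and observe that $T_j\delta_x$ is supported in $(\supp T_j)_x\subseteq E_x$, a set of size at most $n$; moreover, by disjointness of the supports, for each $x'\in E_x$ there is at most one $j$ with $(x,x')\in\supp(T_j)$, so only finitely many (in fact at most $n$) indices $j$ satisfy $T_j\delta_x\neq 0$. Consequently the net $(S_F\delta_x)$, indexed by finite $F\subseteq J$ ordered by inclusion, is eventually constant and therefore converges in norm to a vector that I call $S\delta_x$. By linearity, $(S_F\xi)$ converges for every finitely supported $\xi\in\ell_2(X)$.

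Combining this pointwise convergence on a dense subspace with the uniform bound $\|S_F\|\leq nM$ via a standard $\eps/3$ argument upgrades the convergence to every $\xi\in\ell_2(X)$: given $\xi'$ finitely supported with $\|\xi-\xi'\|<\eps/(3nM)$ and $F_0$ containing every $j$ for which $T_j\delta_z\neq 0$ for some $z\in\supp\xi'$, the net $(S_F\xi)_{F\supseteq F_0}$ is $\eps$-Cauchy. This produces a bounded operator $S\in B(\ell_2(X))$ with $S_F\to S$ in the strong operator topology. Finally, for $(x,x')\notin E$ every $T_j$ annihilates the matrix coefficient at $(x,x')$, so $\langle S\delta_x,\delta_{x'}\rangle=0$, giving $\supp(S)\subseteq E\in\cE$ and hence $S\in\cstu[X]$. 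I do not anticipate a genuine obstacle here: the disjoint-supports hypothesis, together with uniform local finiteness, essentially collapses SOT convergence to a finite sum on each basis vector, and the norm bound makes the passage to arbitrary vectors automatic.
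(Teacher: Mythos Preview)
Your argument is correct. The paper itself omits the proof entirely (the lemma is stated with a terminal \qed\ and the preceding sentence calls it ``trivial''), so there is nothing to compare against; your write-up simply spells out the routine verification the authors left implicit, using exactly the norm estimate recorded just before the lemma and the obvious finiteness coming from uniform local finiteness plus disjointness of supports.
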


We conclude this subsection with the definition of ghost operators in uniform Roe algebras.

\begin{defi}\label{DefiGhost}
Let $(X,\mathcal{E})$ be a uniformly locally finite coarse space. An operator $T\in \cstu(X)$ is a \emph{ghost} if $\langle T\delta_x,\delta_{x'}\rangle\to 0$ as $(x,x')\to \infty$ on $X\times X$, i.e., if for all $\eps>0$ there exists a finite set $A\subset X$ such that 
\[|\langle T\delta_x,\delta_{x'}\rangle|< \eps,\]
for all $x,x'\in X\setminus A$.
\end{defi}

The set of  all ghost operators of a  uniform Roe algebra forms an ideal which contains all compact operators. For more on the class of ghost operators, we refer to \cite{ChenWang2004}, \cite{ChenWang2005} and \cite{RoeBook}.

\section{Spatially implemented isomorphisms}\label{SectionSpatially}

In this section, we show that 
a minor modification of the proof of  \cite[Lemma~3.1]{SpakulaWillett2013} 
gives that 
 any    isomorphism between uniform Roe algebras is spatially implemented, thus showing
  \eqref{SpatiallyImplemented} from the introduction. 

\begin{lemma}\label{LemmaSpakulaWillett} 
Every isomorphism between uniform Roe algebras associated with coarse spaces
is spatially implemented. 
Every isomorphism between algebraic uniform Roe algebras associated with coarse spaces
is spatially implemented. 
\end{lemma}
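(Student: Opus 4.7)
The plan is to construct a unitary $U\colon \ell_2(X)\to \ell_2(Y)$ out of the images of the minimal rank-one projections $e_{xx}$ under $\Phi$ and then verify $\Phi=\Ad U$ by a matrix-entry computation.

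First I would identify the minimal projections of the two algebras. Since $\Delta_X\in\cE$, each $e_{xx}$ lies in $\cstu[X]$ and is minimal in $\cstu(X)$: any projection $p\le e_{xx}$ satisfies $p=e_{xx}pe_{xx}\in\C e_{xx}$. Conversely, any minimal projection $q\in\cstu(Y)$ has rank one on $\ell_2(Y)$: picking $y$ with $q\delta_y\neq 0$, the element $qf_{yy}q=|q\delta_y\rangle\langle q\delta_y|\in\cstu(Y)$ is a positive rank-one multiple of a subprojection of $q$, so minimality forces $q=\|q\delta_y\|^{-2}qf_{yy}q$ itself to have rank one. Applying $\Phi$ therefore gives $\Phi(e_{xx})=|v_x\rangle\langle v_x|$ for some unit vector $v_x\in\ell_2(Y)$, and the same argument applied to $\Phi^{-1}$ yields $\Phi^{-1}(f_{yy})=|w_y\rangle\langle w_y|$ in $\cstu(X)$.

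Next I would show that $(v_x)_{x\in X}$ is an orthonormal basis of $\ell_2(Y)$. Orthonormality is immediate from $\Phi(e_{xx})\Phi(e_{x'x'})=\Phi(e_{xx}e_{x'x'})=0$ whenever $x\neq x'$. For completeness, comparing diagonal compressions through the identity $e_{xx}\Phi^{-1}(f_{yy})e_{xx}=\Phi^{-1}\bigl(\Phi(e_{xx})f_{yy}\Phi(e_{xx})\bigr)$ forces $|w_y(x)|^2=|\langle v_x,\delta_y\rangle|^2$ for every $x,y$; summing over $x$ and using $\|w_y\|=1$ yields the Parseval identity $\sum_x|\langle v_x,\delta_y\rangle|^2=1$ for every $y\in Y$, so each $\delta_y$ lies in $\overline{\mathrm{span}}\{v_x\}$.

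The delicate step, and the one where non-connectedness of $(X,\cE)$ actually enters, is to fix the phases of the $v_x$ so that $\Phi$ matches matrix units. I would pick a basepoint $x_0$ in each connected component of $(X,\cE)$ and, for every $x$ in the component of $x_0$, replace $v_x$ by $\Phi(e_{xx_0})v_{x_0}$; this is still a unit vector representing the projection $\Phi(e_{xx})$, and by construction $\Phi(e_{xx_0})=|v_x\rangle\langle v_{x_0}|$. For any $x,x'$ in the same component, the factorization $e_{xx'}=e_{xx_0}e_{x_0x'}$ then gives $\Phi(e_{xx'})=|v_x\rangle\langle v_{x'}|$, and every pair $(x,x')$ with $\{(x,x')\}\in\cE$ lies in a single component, so this covers all matrix units appearing in $\cstu[X]$. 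Setting $U\delta_x:=v_x$ produces the desired unitary.

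Finally, for $T\in\cstu[X]$ with $\supp T\subseteq E\in\cE$ and matrix entries $t_{xx'}=\langle T\delta_{x'},\delta_x\rangle$, the compression
\[
\langle\Phi(T)v_{x'},v_x\rangle=\langle\Phi(e_{xx})\Phi(T)\Phi(e_{x'x'})v_{x'},v_x\rangle=t_{xx'}=\langle UTU^*v_{x'},v_x\rangle
\]
uses the phase-matched relation $\Phi(e_{xx'})=Ue_{xx'}U^*$ on the support of $T$; since $(v_x)$ is a basis this settles the algebraic case. For the \cstar-algebraic case, both $\Phi$ and $\Ad U$ are norm continuous, so the equality extends from $\cstu[X]$ to its norm closure $\cstu(X)$. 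The main obstacle will be the phase-choice step: one must make basepoint choices in each connected component that assemble into a single well-defined unitary without any global connectedness hypothesis, and this is precisely where the present argument has to diverge from the connected/metric setting treated in \v{S}pakula--Willett.
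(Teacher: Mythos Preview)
Your argument is correct, and it takes a somewhat different route from the paper's. The paper first identifies the scalar projections in the center $Z(\cstu(X))$ as exactly the projections $P_i$ onto the connected components $X_i$ (Lemma~\ref{LCentre}); an isomorphism must therefore permute these, inducing a bijection $f$ between the index sets of components. On each component the algebra contains $K(\ell_2(X_i))$ as the ideal generated by scalar projections, so $\Phi$ restricts to an isomorphism $K(\ell_2(X_i))\to K(\ell_2(Y_{f(i)}))$, and the paper then invokes the classical fact that every such isomorphism is $\Ad U_i$ for a unitary $U_i$, unique up to scalar. You instead work directly with minimal projections, build the orthonormal basis $(v_x)$ by hand, and fix phases by choosing a basepoint in each component and transporting via $\Phi(e_{xx_0})$; this is essentially the standard proof that automorphisms of $K(H)$ are inner, unfolded inside the Roe algebra. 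The paper's proof is shorter because it outsources the phase issue to the known $K(H)$ result, while yours is more self-contained and makes explicit exactly where connectedness is used (coherent phase choice requires $e_{xx_0}\in\cstu[X]$). Both arguments handle the algebraic and \cstar-cases at once.
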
 

We only need to show that the assumptions that the spaces be connected and metrizable
used in  \cite[Lemma~3.1]{SpakulaWillett2013} are not necessary for the conclusion. This requires only 
a little analysis of the center of an (algebraic)  uniform Roe algebra.

Suppose  $(X,\mathcal{E})$ is a   coarse space
with  connected components $X_i$, for $i\in I$. 
The space $\ell_2(X)$ can be naturally identified with  the Hilbert sum
$\bigoplus_{i\in I} \ell_2(X_i)$. 
Denote the projection from $\ell_2(X)$ to $\ell_2(X_i)$ by $1_{X_i}$.  
A \emph{corner} of a \cstar-algebra $A$ is a subalgebra of the form $PAP$ for some nonzero 
projection $P$ in $A$.
A projection $P$ in a \cstar-algebra $A$ is called a \emph{scalar projection of $A$} if the corner 
$PAP$ is isomorphic to $\C$.  
The center of an algebra $A$ is denoted  $Z(A)$. 

\begin{lemma} \label{LCentre}
Let $(X,\mathcal{E})$ be a   coarse space
with  connected components $X_i$, for $i\in I$. Then, the projections   
$(1_{X_i}:\ell_2(X)\to \ell_2(X_i))_{i\in I}$ are 
the only  scalar  projections of  $Z(\cstu(X))$. 
They are also the only scalar projections  of $Z(\cstu[X])$. 
\end{lemma}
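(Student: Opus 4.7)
The plan is to identify $Z(\cstu(X))$ explicitly and then read off its minimal (``scalar'') projections. The crucial preliminary observation is that for every $T \in \cstu(X)$, $\supp(T) \subset \bigsqcup_{i \in I} X_i \times X_i$: if $T \in \cstu[X]$ has support in some $E \in \cE$, then $(x,y) \in \supp(T)$ forces $\{(x,y)\} \subset E \in \cE$, hence $\{(x,y)\} \in \cE$, placing $x$ and $y$ in the same connected component. The property passes to $T \in \cstu(X)$ because individual matrix coefficients $\langle T \delta_x, \delta_y \rangle$ are norm-continuous in $T$.

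First I would show each $P_i \in Z(\cstu(X))$. The support restriction above gives $P_i T P_j = 0$ for $i \neq j$, and together with the strong-operator decomposition $I = \sum_j P_j$ and the SOT-continuity of left- and right-multiplication by the bounded operator $P_i$, this yields $P_i T = P_i T P_i = T P_i$ for every $T \in \cstu(X)$.

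Next I would compute the center explicitly. Since $\ell_\infty(X)$ is a masa in $\cstu(X)$ (noted just before Definition~\ref{DefiCartanMasa}), one gets $Z(\cstu(X)) \subset \ell_\infty(X)$. To see which $f \in \ell_\infty(X)$ are central, test against the matrix units $e_{xy}$ with $\{(x,y)\} \in \cE$ (i.e.\ $x,y$ in the same component): the identity $f e_{xy} = e_{xy} f$ forces $f(x) = f(y)$. Conversely, any $f \in \ell_\infty(X)$ that is constant on each $X_i$ commutes with every $T \in \cstu(X)$ by the support restriction. Hence
\[
Z(\cstu(X)) = \{f \in \ell_\infty(X) : f \text{ is constant on each } X_i\} \cong \ell_\infty(I),
\]
whose minimal projections are exactly the characteristic functions $\chi_{\{i\}}$, corresponding under this isomorphism to the $P_i$. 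In the language of the paper, these are precisely the scalar projections of the (abelian) algebra $Z(\cstu(X))$.

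For the algebraic version $\cstu[X]$ the argument is essentially identical: $\ell_\infty(X) \subset \cstu[X]$ since elements of $\ell_\infty(X)$ have support in $\Delta_X \in \cE$, and maximality of $\ell_\infty(X)$ as an abelian subalgebra of $\cstu[X]$ is inherited from its maximality in $\cstu(X)$ via $(\ell_\infty(X))' \cap \cstu[X] \subset (\ell_\infty(X))' \cap \cstu(X) = \ell_\infty(X)$. I do not anticipate a serious obstacle; the only mildly delicate points are the passage from $\cstu[X]$ to $\cstu(X)$ in the support restriction (continuity of matrix coefficients) and the SOT-convergence argument establishing $P_i \in Z(\cstu(X))$.
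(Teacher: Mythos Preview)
Your proof is correct and reaches the same conclusion as the paper, namely that $Z(\cstu(X))$ coincides with $\{f\in\ell_\infty(X): f\text{ constant on each }X_i\}\cong\ell_\infty(I)$, whose scalar (equivalently, minimal) projections are exactly the $P_i$. The route, however, differs somewhat from the paper's.

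The paper argues via the sandwich
\[
\bigoplus_{i\in I} K(\ell_2(X_i))\subseteq \cstu(X)\subseteq \prod_{i\in I}\cstu(X_i),
\]
observing that anything central in $\cstu(X)$ must commute with the left-hand ideal; since the commutant of $\bigoplus_i K(\ell_2(X_i))$ inside $\prod_i B(\ell_2(X_i))$ is precisely the von Neumann algebra generated by $\{P_i\}$, the conclusion follows in one stroke. Your argument instead first invokes the masa property of $\ell_\infty(X)$ to land in the diagonal, and then tests against the individual matrix units $e_{xy}$ (for $x,y$ in the same component) to force constancy on components. Both arguments ultimately exploit commutation with rank-one operators connecting points of a component; the paper packages this as ``commutant of compacts is scalars'' applied blockwise, while you unpack it into the two steps ``commutant of diagonals is diagonals'' followed by a coordinate computation. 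Your version is a bit more explicit and yields the full equality $Z(\cstu(X))=\ell_\infty(I)$ directly, whereas the paper only writes down the inclusion needed for the lemma; the paper's version is shorter and more structural.
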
 

\begin{proof} If $X$ is connected, then $\cstu[X]$ includes all finite rank operators 
and therefore $Z(\cstu[X])=Z(\cstu(X))=\C\cdot 1$, so the statement follows. 
In the general case, 
via the  identification from the previous paragraph,  $\cstu(X)$ is identified with $\prod_{i\in I} B(\ell_2(X_i))$. Therefore, we can see that $Z(\cstu[X])=Z(\cstu(X))=\oplus_{i\in I}\mathbb{C}\cdot 1_{X_i}$ and the statement follows.
\end{proof}

\begin{proof}[Proof of Lemma~\ref{LemmaSpakulaWillett}]
Suppose  $(X,\mathcal{E})$ and $(Y,\mathcal{F})$ are  coarse spaces
with connected components $X_i$, for $i\in I$, and $Y_j$, for $j\in J$, 
and that   $\Phi:\cstu(X)\to \cstu(Y)$
is an isomorphism.  
Lemma~\ref{LCentre} implies that there exists a bijection $f\colon I\to J$ 
such that $\Phi(1_{X_i})=1_{Y_{f(i)}}$. 

It remains to prove that the isomorphism between $\cstu(X_i)$ and $\cstu(Y_{f(i)})$
is implemented by a unitary for every $i$.  

Note that $\Phi$ sends scalar projections of $\cstu(X)$ to $\cstu(Y)$. 
Fix $i$. 
The algebra of compact operators $K(\ell_2(X_i))$ is  
equal to the ideal  
of $\cstu(X_i)$ generated by its scalar projections, 
and 
the algebra of compact operators $K(\ell_2(Y_{f(i)}))$ is  
equal to the ideal  
of $\cstu(Y_{f(i)})$ generated by its scalar projections. 
Therefore $\Phi(K(\ell_2(X_i))=K(\ell_2(Y_{f(i)})$. 
Since an isomorphism between two algebras of compact operators is implemented by a unitary 
unique up to the multiplication by a scalar, a unitary $U_i\colon \ell_2(X_i)\to \ell_2(Y_{f(i)})$
implements the restriction of $\Phi$ to $\cstu(X_i)$. This concludes the proof in the case of 
uniform Roe algebras. 

An analogous argument shows the analogous statement holds for $\cstu[X]$. 
\end{proof}

\begin{cor}\label{LemmaRank}
An  isomorphism between uniform Roe algebras 
carries operators of rank $n$ to operators of rank $n$ for all $n\in \N$.  
It also carries operators with orthogonal images to operators with orthogonal images. \qed
\end{cor}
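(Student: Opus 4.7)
The plan is to deduce both assertions directly from Lemma~\ref{LemmaSpakulaWillett}. That lemma supplies, for any isomorphism $\Phi$ between (algebraic or completed) uniform Roe algebras, a unitary $U\colon \ell_2(X)\to \ell_2(Y)$ with $\Phi=\Ad U$, so the entire proof becomes an observation about unitary conjugation.

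For the rank statement I would argue as follows. If $T\in \cstu(X)$ has rank $n$, then the image of $\Phi(T)=UTU^{*}$ equals $U(T\ell_2(X))$, and since $U$ is a linear bijection from $\ell_2(X)$ onto $\ell_2(Y)$ it carries the $n$-dimensional subspace $T\ell_2(X)$ to an $n$-dimensional subspace of $\ell_2(Y)$. Thus $\Phi(T)$ again has rank $n$. The same argument applies in the algebraic case, using the second clause of Lemma~\ref{LemmaSpakulaWillett}.

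For the orthogonality statement I would first rewrite the hypothesis algebraically: two bounded operators $T_1,T_2$ on $\ell_2(X)$ have orthogonal images if and only if $T_2^{*}T_1=0$. Now $\Ad U$ automatically preserves both adjoints and products, so $\Phi$ is a $*$-homomorphism, and
\[
\Phi(T_2)^{*}\Phi(T_1)=\Phi(T_2^{*}T_1)=\Phi(0)=0,
\]
which means the images of $\Phi(T_1)$ and $\Phi(T_2)$ are orthogonal in $\ell_2(Y)$. This also goes through verbatim for isomorphisms between algebraic uniform Roe algebras.

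I do not foresee any obstacle here: the corollary is essentially a packaging of two elementary properties of conjugation by a unitary, and all the substantive work was already carried out in the proof of Lemma~\ref{LemmaSpakulaWillett}. The only mild point worth flagging is that in the algebraic setting the map $\Phi$ was a priori only an algebra isomorphism, but spatial implementation upgrades it to a $*$-isomorphism for free, which is what licenses the identity $\Phi(T_2^{*}T_1)=\Phi(T_2)^{*}\Phi(T_1)$.
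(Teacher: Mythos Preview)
Your proposal is correct and is exactly the approach the paper intends: the corollary is marked with \qed\ immediately after the statement, meaning the authors regard it as a direct consequence of Lemma~\ref{LemmaSpakulaWillett}, and your argument spells out precisely that deduction via unitary conjugation. Your added remark about the algebraic case (that spatial implementation forces $\Phi$ to be $*$-preserving) is a nice bonus, though the corollary as stated concerns only the completed uniform Roe algebras.
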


\begin{cor}\label{CorCardinality}
Let $(X,\cE)$ and $(Y,\cF)$ be coarse spaces with    isomorphic uniform Roe algebras. Then $|X|=|Y|$.
\end{cor}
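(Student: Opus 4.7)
The plan is to reduce the corollary immediately to a Hilbert space dimension comparison by invoking Lemma \ref{LemmaSpakulaWillett}. Given an isomorphism $\Phi\colon \cstu(X)\to\cstu(Y)$, the lemma produces a unitary $U\colon \ell_2(X)\to \ell_2(Y)$ such that $\Phi=\Ad U$. In particular, $\ell_2(X)$ and $\ell_2(Y)$ are isomorphic as Hilbert spaces, and therefore share the same orthonormal basis cardinality. Since $(\delta_x)_{x\in X}$ and $(\delta_y)_{y\in Y}$ are orthonormal bases of $\ell_2(X)$ and $\ell_2(Y)$ respectively, this cardinality equals $|X|$ on one side and $|Y|$ on the other, so $|X|=|Y|$.

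A marginally more self-contained alternative is to use Corollary \ref{LemmaRank} directly, bypassing spatial implementation. The projections $\{e_{xx}:x\in X\}$ are pairwise orthogonal rank-one projections in $\cstu(X)$; by Corollary \ref{LemmaRank}, their images $\{\Phi(e_{xx}):x\in X\}$ form a family of pairwise orthogonal rank-one projections in $\cstu(Y)\subseteq B(\ell_2(Y))$. The ranges are then pairwise orthogonal one-dimensional subspaces of $\ell_2(Y)$, so the indexing set has cardinality at most the Hilbert dimension of $\ell_2(Y)$, which is $|Y|$. Applying the same argument to $\Phi^{-1}$ gives the reverse inequality.

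Either route is quick; there is no real obstacle. The only small point to keep honest is the identification of the Hilbert dimension of $\ell_2(X)$ with $|X|$, which is trivial for finite $X$ and is the standard fact that any two orthonormal bases of an infinite-dimensional Hilbert space have the same cardinality (equal to $|X|$ for $\ell_2(X)$ with $X$ infinite). I would present the first, shorter argument as the proof, since Lemma \ref{LemmaSpakulaWillett} has just been established.
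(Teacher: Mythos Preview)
Your first argument is correct and is essentially identical to the paper's own proof: invoke Lemma~\ref{LemmaSpakulaWillett} to obtain a unitary $U\colon \ell_2(X)\to\ell_2(Y)$, then use that a unitary sends orthonormal bases to orthonormal bases and that an orthonormal basis of $\ell_2(X)$ has cardinality $|X|$. The alternative via Corollary~\ref{LemmaRank} is also fine but unnecessary.
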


\begin{proof}
This follows straightforwardly from Lemma \ref{LemmaSpakulaWillett}, the fact that an unitary isomorphism $U:\ell_2(X)\to \ell_2(Y)$ must take an orthonormal basis of $\ell_2(X)$ to an orthonormal basis of $\ell_2(Y)$, and the fact that an orthonormal basis of $\ell_2(X)$ has cardinality $|X|$. 
\end{proof}

\section{Rigid isomorphism between uniform Roe algebras and Baire category}\label{SectionIsoRoeAlg}

The goal of this section is to prove    \eqref{ThmRigIsoImpliesCoarseEqui} and \eqref{ThmRigIsoImpliesCoarseEquiMARTIN} 
(Theorem~\ref{T.ThmRigIsoImpliesCoarseEqui})
 from the introduction. 
The main step in the proof of this result is Theorem \ref{T.Approximation} below, which 
 is a strengthening  of Lemma 3.2 of \cite{SpakulaWillett2013}.

\begin{quote} 
Readers interested only in metric  spaces will lose nothing by 
assuming all the spaces are metrizable throughout this section.\footnote{This is because the 
 Baire category theorem   implies that all  metric spaces are, in the 
 terminology introduced below,  small.  
It is hard to resist quoting  from \cite{shelah1992cardinal}: 
``To these we have nothing to say at all, beyond a reasonable request that they refrain from using the countable additivity of Lebesgue measure.''}
 \end{quote} 
Other readers may want to consult \cite[Section~III]{Ku:Set} for more details on infinitary combinatorics and  
Martin's Axiom in particular. Define 
\[\D=\{z\in\C\mid |z|\leq 1\}\]
and endow $\D$ with its usual metric. For a set $J$ consider the space $\D^J$ with the product topology. 
This is a compact Hausdorff space. Let $\text{cov}(\D^J)$ denote the minimal cardinality 
of a family of nowhere dense subsets of $\D^J$ that covers the space. 
The following lemma collects some well-known results.

\begin{lemma}\label{L.cov}  Suppose that  $\kappa$ and $\mu$ are infinite cardinals.  
\begin{enumerate}[(i)]
\item \label{cov.0} If $\text{cov}(\D^\kappa)\leq \kappa$ then $\kappa$ is uncountable.  
 \item \label{cov.2} If $\kappa<\mu$ then $\text{cov}(\D^\kappa)\geq  \text{cov}(\D^\mu)$. 
 \item \label{cov.3} If $\kappa\geq 2^{\aleph_0}$ then $\text{cov}(\D^\kappa)\leq \kappa$. 
\item \label{cov.1} Martin's Axiom for $\kappa$ dense sets, MA$_\kappa$,  
 implies that  $\cov(\D^\mu)>\kappa$ for all~$\mu$. 
 \item \label{cov.4} It is consistent with ZFC that $\text{cov}(\D^\mu)>\kappa$
 for all $\kappa<2^{\aleph_0}$ and all~$\mu$. 
\end{enumerate}
\end{lemma}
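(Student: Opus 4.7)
The plan is to dispose of each clause using the standard toolkit of Baire category, product-space projections, and Martin's Axiom in its topological formulation for compact Hausdorff ccc spaces. All five items are essentially folklore; the cleanest route treats (\ref{cov.0})--(\ref{cov.3}) as pure topology and (\ref{cov.1})--(\ref{cov.4}) as straightforward applications of MA.

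First I would dispatch (\ref{cov.0}) and (\ref{cov.2}) by pure topology. For (\ref{cov.0}), note that $\D^{\aleph_0}$ is compact and metrizable, hence a Baire space, so it admits no countable cover by nowhere dense sets; this gives $\cov(\D^{\aleph_0})>\aleph_0$, which is the contrapositive of the claim. For (\ref{cov.2}), given $\kappa<\mu$ take the coordinate projection $\pi\colon\D^\mu\to \D^\kappa$, which is continuous, surjective, and (being a product projection) open. If $N\subseteq\D^\kappa$ is nowhere dense, then $\pi^{-1}(N)$ is closed, and any nonempty open $U\subseteq\pi^{-1}(N)$ would produce a nonempty open $\pi(U)\subseteq N$, contradicting nowhere-density. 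Pulling back a cover of $\D^\kappa$ along $\pi$ therefore yields a cover of $\D^\mu$ of the same cardinality, so $\cov(\D^\mu)\le\cov(\D^\kappa)$.

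Next, for (\ref{cov.3}) I fix any countably infinite $S\subseteq\kappa$ and use the decomposition $\D^\kappa\cong\D^S\times\D^{\kappa\setminus S}$. For each $s\in\D^S$ the fiber $\{s\}\times\D^{\kappa\setminus S}$ is closed, and since $S$ is infinite while basic open sets in a product restrict only finitely many coordinates, it has empty interior. These fibers cover $\D^\kappa$ and there are $|\D^S|=2^{\aleph_0}\le\kappa$ of them, yielding $\cov(\D^\kappa)\le\kappa$.

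Finally, (\ref{cov.1}) and (\ref{cov.4}) come from Martin's Axiom. The key observation is that $\D^\mu$ is a compact Hausdorff ccc space: compactness is Tychonoff's theorem, and ccc follows from the classical $\Delta$-system argument showing that an arbitrary product of spaces whose finite subproducts are ccc is itself ccc, applied to the separable space $\D$. The standard topological form of MA$_\kappa$ then asserts that the intersection of any $\kappa$ dense open subsets of such a space is dense; equivalently, no union of $\kappa$ nowhere dense sets exhausts the space, giving $\cov(\D^\mu)>\kappa$. Item (\ref{cov.4}) then follows from (\ref{cov.1}) and the classical Martin--Solovay consistency theorem, which produces a model of ZFC in which MA$_\kappa$ holds for all $\kappa<2^{\aleph_0}$. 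The only step requiring more than Baire category or elementary product topology is the ccc assertion for $\D^\mu$; this is the main obstacle, though it is a well-known consequence of the $\Delta$-system lemma.
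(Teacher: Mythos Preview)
Your proof is correct and follows essentially the same route as the paper: Baire category for (\ref{cov.0}), lifting nowhere dense sets along the product projection for (\ref{cov.2}), covering by fibers over a countable factor for (\ref{cov.3}), and the ccc of products of separable spaces plus the topological form of MA for (\ref{cov.1}) and (\ref{cov.4}). One minor slip: in (\ref{cov.2}) you assert that $\pi^{-1}(N)$ is closed, which need not hold for arbitrary nowhere dense $N$; either note that you may replace $N$ by its closure without loss of generality, or run the argument directly with $\overline{N}$.
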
 

\begin{proof} 
\eqref{cov.0} is the Baire category theorem. 

For \eqref{cov.2}, note that if $\D^\mu$ is homeomorphic to $\D^\kappa\times \D^\mu$, 
and that if $F$ is nowhere dense in $\D^\kappa$ then $F\times \D^\mu$ is nowhere dense in $\twom$. 

For \eqref{cov.3}, cover $\D^\N$ by the singletons and apply \eqref{cov.2}. 

The Tychonoff product of any family of separable spaces has the countable chain condition (ccc; see \cite[Definition~III.2.1]{Ku:Set}) by 
\cite[Theorem~III.2.8]{Ku:Set}. 
Therefore  the space $\D^\mu$ has the countable chain condition and 
\eqref{cov.1}  is a consequence of \cite[Lemma~III.3.18]{Ku:Set}. 
For \eqref{cov.4}, use \eqref{cov.3} and the consistency of Martin's Axiom with ZFC (\cite{Ku:Set}). 
\end{proof}

The following (nonstandard)  terminology will be useful.

\begin{defi} \label{Def.Small}
Let $\kappa$ and $\mu$ be cardinals. The pair $(\kappa, \mu)$ is called \emph{small} if $\text{cov}(\D^\mu)>\kappa$.    
  A coarse space $(X,\cE)$ is said to be \emph{small} if the pair $(|\cE|_\text{min},|X|)$ is small, where $|\cE|_\text{min}$ is defined as the minimal cardinality of a set of generators of~$\cE$. 
\end{defi} 

Since for every metrizable space $(X,\cE)$ we have that  $|\cE|_{\min}$ is countable, the Baire category theorem implies that every metrizable coarse space is small. By Lemma~\ref{L.cov}(iv), if $|\cE|_{\text{min}}<2^{\aleph_0}$, then Martin's Axiom for $|\cE|_{\text{min}}$ dense sets implies that $(X,\cE)$ is small.  On the other hand, by Lemma~\ref{L.cov}(iii), if $|\cE|_\text{min}\geq 2^{\aleph_0}$ and $\text{cov}(\D^{X})\leq \text{cov}(\D^{|\cE|_\text{min}})$, then $(X,\cE)$ is not small. In particular, a metric space with $|\cE|_\text{min}= 2^{\aleph_0}$ is not small.

\begin{defi} \label{Def.CoarseLike} 
Suppose $(X,\cE)$ is a coarse space, $\e>0$, and $E\in \cE$. An operator $T\in B(\ell_2(X))$
can be  \emph{$\e$-$E$-approximated} if 
there exists $S\in \cstu(X)$ such that $\supp(S)\subseteq E$ and $\|T-S\|\leq\e$. 
We say that $S$ is an \emph{$\e$-$E$-approximation} to $T$. 
\end{defi} 

\begin{theorem} \label{T.Approximation} 
Suppose that   $(X,\cE)$ and $(Y,\cF)$ are  small and  uniformly locally finite coarse spaces. Let  
  $\Phi\colon \cstu(Y)\to \cstu(X)$ be am    isomomorphism. 
 Then for every $F\in \cF$ and $\e>0$ there exists $E\coloneqq E(F,\e)\in \cE$ 
 such that $\Phi(T)$ can be  $\e$-$E$-approximated for all $T\in \cstu(Y)$
 such that $\supp(T)\subseteq F$ and $\|T\|\leq 1$. 
 \end{theorem}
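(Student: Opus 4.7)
The plan is to argue by contradiction using Baire category on the compact space $\D^F$, which, via
$T\mapsto(T_{yy'})_{(y,y')\in F}$, parametrizes the unit ball
$\mathcal{B}_F:=\{T\in B(\ell_2(Y)):\supp(T)\subseteq F,\ \sup_{y,y'}|T_{yy'}|\leq 1\}$ of operators
with support in $F$. Schur's test and uniform local finiteness of $F$ ensure that every element of
$\D^F$ arises this way, that $\|\cdot\|$ and the entrywise sup-norm are comparable on $\mathcal{B}_F$, and
that pointwise convergence on $F$ coincides with SOT convergence on norm-bounded subsets.
Lemma~\ref{LemmaSpakulaWillett} provides a unitary $U$ with $\Phi=\Ad U$, so $\Phi$ is
SOT-continuous on norm-bounded sets. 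By a standard rescaling it suffices to produce a single
$E\in\cE$ that $\eps$-$E$-approximates $\Phi(T)$ uniformly for all $T\in\mathcal{B}_F$.

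For each $E\in\cE$ consider
\[
G_E:=\{T\in\mathcal{B}_F : \Phi(T)\text{ is }(\eps/2)\text{-}E\text{-approximable}\}.
\]
I would verify that $G_E$ is closed in $\mathcal{B}_F=\D^F$: given a net $T_\lambda\to T$ in $\D^F$ with
witnesses $S_\lambda\in\cstu[X]$, $\supp(S_\lambda)\subseteq E$, $\|\Phi(T_\lambda)-S_\lambda\|\leq\eps/2$,
the family $(S_\lambda)$ is uniformly norm-bounded and the norm-bounded operators supported in the
fixed $E$ form a WOT-compact set (by uniform local finiteness of $E$); taking a WOT-cluster point
$S$ of $(S_\lambda)$ and combining the SOT convergence $\Phi(T_\lambda)\to\Phi(T)$ with WOT-lower
semicontinuity of the norm gives $\|\Phi(T)-S\|\leq\eps/2$, so $T\in G_E$. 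Density of $\cstu[X]$
in $\cstu(X)$ yields $\mathcal{B}_F=\bigcup_{E\in\cE}G_E$. Replacing $\cE$ by the subfamily $\cE'$
obtained from a generating set of minimal size $\kappa:=|\cE|_{\min}$ by closing under the operations
$(\cdot)^{-1},\cup,\circ$ (but not under subsets) reduces this to a cover by $\kappa$ closed sets,
since every $E\in\cE$ is contained in some $E'\in\cE'$ and $G_E\subseteq G_{E'}$.

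By Corollary~\ref{CorCardinality} $|Y|=|X|$, so $|F|\leq|Y|=|X|$, and Lemma~\ref{L.cov}(ii)
combined with smallness of $(X,\cE)$ yields $\cov(\D^F)\geq\cov(\D^{|X|})>\kappa$. Hence not all
$G_{E'}$ can be nowhere dense in $\D^F$, so some $G_{E^*}$ has nonempty interior containing a basic
open set $V=\{T\in\D^F : |T_{y_iy_i'}-t_i^*|<\delta,\ i=1,\dots,n\}$ for finitely many
$(y_i,y_i')\in F$. Given an arbitrary $T\in\mathcal{B}_F$, obtain $T'\in V$ by replacing the entries
$T_{y_iy_i'}$ with $t_i^*$; then $T-T'=\sum_{i\leq n}c_i e_{y_iy_i'}$ with $|c_i|\leq 2$, so
$\Phi(T-T')$ is a fixed linear combination of $n$ elements of $\cstu(X)$. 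Approximating each
$\Phi(e_{y_iy_i'})$ to within $\eps/(4n)$ by an $S_i\in\cstu[X]$ supported in some $E_i\in\cE$ and
adding the result to an $(\eps/2)$-$E^*$-approximant of $\Phi(T')\in G_{E^*}$ produces an
$\eps$-$E(F,\eps)$-approximation of $\Phi(T)$, with $E(F,\eps):=E^*\cup\bigcup_{i\leq n}E_i\in\cE$.

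The main obstacle is the interplay between two topologies in the analysis of $G_E$: establishing
closedness requires interchanging a norm estimate (the $\eps/2$ condition) with pointwise
convergence (the product topology on $\D^F$), and this works only because $\Phi$ is spatially
implemented and $E$ is uniformly locally finite. The second delicate point is the upgrade from
\emph{nonempty interior} to \emph{all of $\mathcal{B}_F$}, which hinges on the observation that two
operators in $\mathcal{B}_F$ can be made to agree on the finitely many coordinates defining a basic open
neighborhood at the cost of a bounded finite-rank perturbation, whose image under $\Phi$ is a
fixed element of $\cstu(X)$ admitting a uniform approximation.
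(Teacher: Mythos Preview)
Your argument is correct and follows the same Baire-category strategy on $\D^F$ as the paper, but the organization differs in an instructive way. The paper packages the work in Lemma~\ref{L.Approx}: assuming the conclusion fails, it derives the strengthened failure condition~\eqref{**.eE}, and then proves that each set $U_E=\{\bar\lambda:T_{\bar\lambda}\text{ is }\eps\text{-}E\text{-approximable}\}$ is closed \emph{and has empty interior}, contradicting smallness. Your version is the direct (contrapositive) form: you prove closedness of $G_E$ outright via WOT-compactness of $\{S:\supp(S)\subseteq E,\ \|S\|\leq C\}$ and lower semicontinuity of the norm (this is cleaner than the paper's closedness argument, which routes through Lemma~\ref{L.eE2} and finite-rank cutdowns), and then, instead of showing every $G_E$ is nowhere dense under a failure hypothesis, you use smallness to find one $G_{E^*}$ with interior and bootstrap to all of $\mathcal{B}_F$ by a finite-rank perturbation supported on the finitely many constrained coordinates. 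The two are really dual: the paper's ``empty interior'' step amounts to showing that any basic open set meets the complement of $U_E$, and your perturbation step shows that any basic open set contained in $G_{E^*}$ already controls all of $\mathcal{B}_F$; both hinge on the fact that changing finitely many matrix entries of $T$ affects $\Phi(T)$ only by a fixed element of $\cstu(X)$. One minor remark: your ``standard rescaling'' reduces the theorem to $T\in\mathcal{B}_F$, but homogeneity only yields $d_E(\Phi(T))\leq\eps\|T\|_\infty$ for general $T$, not $\leq\eps$; the paper's proof has the same feature (it too concludes with $\bar\lambda\in\D^J$), and the unit-ball version is all that is used downstream.
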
 
  
The  proof of Theorem~\ref{T.Approximation} was inspired by 
the canonical Ramsey Theory (see e.g., \cite{promel1985canonical}). 
For the convenience of the reader, we give a self-contained proof  after a few elementary lemmas
and some bad news. 

\begin{lemma} \label{L.eEapprox}
Suppose that $(X,\cE)$ is a coarse space, $\e>0$, and $E\in \cE$. 
If operators $T_1$ and $T_2$ are $\e$-$E$-approximated, then the operator $T_1+T_2$ can be $2\e$-$E$-approximated.\qed
\end{lemma}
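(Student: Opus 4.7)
The plan is to apply the triangle inequality directly. I will start by unpacking the hypothesis: for each $i\in\{1,2\}$, the assumption that $T_i$ can be $\e$-$E$-approximated provides an operator $S_i\in\cstu(X)$ with $\supp(S_i)\subseteq E$ and $\|T_i-S_i\|\leq\e$. My candidate $2\e$-$E$-approximation to $T_1+T_2$ will be $S\coloneqq S_1+S_2$.

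Two things need to be checked. First, the support condition: if $(x,x')\notin E$, then by assumption $\langle S_i\delta_x,\delta_{x'}\rangle=0$ for $i=1,2$, so $\langle S\delta_x,\delta_{x'}\rangle=0$, giving $\supp(S)\subseteq E$; and $S\in\cstu(X)$ because $\cstu(X)$ is a \cstar-algebra and hence closed under addition. Second, the norm bound follows from the triangle inequality:
\[
\|(T_1+T_2)-S\|\leq \|T_1-S_1\|+\|T_2-S_2\|\leq 2\e.
\]
There is no real obstacle here; the statement is a formal consequence of the subadditivity of supports and the triangle inequality, and it is recorded only for convenient reference in the more substantial arguments that follow.
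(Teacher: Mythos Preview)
Your proof is correct and is exactly the obvious argument the paper has in mind; the paper itself omits the proof entirely (the lemma is stated with a \qed\ and no further comment), so there is nothing to compare.
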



\begin{lemma} \label{L.eEapprox2}
Suppose $(X,\cE)$ is a uniformly locally finite coarse space, $\e>0$, and $E\in \cE$. 
Let   $T$ be an operator in $\cstu(X)$ and $P$ be a finite rank projection in $\ell_\infty(X)$. The following holds.

\begin{enumerate}[(i)]
\item If $T$ is $\eps$-$E$-approximated, so is $TP$.
\item If $TP$ is  $(\e+\delta)$-$E$-approximated for all $\delta>0$, then $TP$ is  $\e$-$E$-approximated.
\end{enumerate} 
\end{lemma}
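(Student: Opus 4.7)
Both parts exploit that $P = \chi_A$ for some finite $A \subseteq X$, so that right-multiplication by $P$ severely restricts the support of an operator.

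For (i), the plan is to exhibit the approximation directly. Given an $\eps$-$E$-approximation $S \in \cstu(X)$ of $T$, I would propose $SP$ as the $\eps$-$E$-approximation of $TP$. The norm bound reduces to $\|TP - SP\| = \|(T-S)P\| \leq \|T-S\| \leq \eps$, while the support inclusion $\supp(SP) \subseteq \supp(S) \subseteq E$ is immediate from the formula $\langle SP\delta_x, \delta_{x'}\rangle = \chi_A(x)\langle S\delta_x, \delta_{x'}\rangle$. Membership of $SP$ in $\cstu(X)$ is automatic since $P \in \ell_\infty(X) \subseteq \cstu(X)$.

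For (ii), the plan is a finite-dimensional compactness argument. Given an $(\eps + 1/n)$-$E$-approximation $S_n$ of $TP$ for each $n \in \N$, I would first apply part (i) to the operator $TP$ (using $(TP)P = TP$), obtaining new approximations $S_n' := S_n P$ which still have support in $E$ and still approximate $TP$ within $\eps + 1/n$. The key observation is that uniform local finiteness forces $B := \bigcup_{x \in A} E_x$ to be finite, hence $\supp(S_n') \subseteq A \times B$ and every $S_n'$ lies in the finite-dimensional subspace of operators whose matrix support is contained in the finite set $E \cap (A \times B)$. Boundedness of $(\|S_n'\|)_n$ together with Bolzano--Weierstrass inside this subspace yields a subsequence converging in norm to some $S^* \in \cstu(X)$ with $\supp(S^*) \subseteq E$ and $\|TP - S^*\| \leq \eps$.

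I do not expect any serious obstacle: part (i) is a direct verification, and the content of part (ii) is simply a reduction to a finite-dimensional closure problem via uniform local finiteness. The only point worth a brief check is that the conditions $\supp(\,\cdot\,)\subseteq E$ and $\|TP - \,\cdot\,\| \leq \eps$ survive the norm limit, but both are closed conditions on the finite-dimensional subspace and pose no difficulty.
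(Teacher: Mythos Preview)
Your proposal is correct and follows essentially the same approach as the paper: for (i) both multiply the given approximation on the right by $P$, and for (ii) both replace the approximations $S_n$ by $S_nP$, observe these live in a finite-dimensional space of operators (via uniform local finiteness of $E$ over the finite set $A$), and extract a norm-convergent subsequence whose limit is the desired $\eps$-$E$-approximation.
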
 

\begin{proof}
Since $\supp(TP)\subset \supp(T)$ and $\|P\|\leq 1$, (i) follows. For (ii), pick $S_n\in\cstu(X)$  such that $\|TP-S_n\|\leq \eps+1/n$ and $\supp(S_n)\subset E$, for all $n\in\N$. Let 
\[X'=\{x\in X\mid P\delta_x\neq 0\}\ \text{ and }\ X''=\{x\in X\mid (\exists x'\in  X') (x',x)\in E\},\]
and notice that both $X'$ and $X''$ are finite. Therefore, since $S_nP$ can be naturally identified with operators from $\ell_2(X')$ to $\ell_2(X'')$, by going to a subsequence, we can assume that $(S_nP)_{n\in\N}$ converges to some $S\in B(\ell_2(X))$ in norm. As $\supp(S_nP)\subset E$, for all $n\in\N$, $\supp(S)\subset E$. Clearly, $\|TP-S\|\leq \eps$. 
\end{proof}

\begin{lemma} \label{L.eE2}
Suppose $(X,\cE)$ is a coarse space, $\e>0$, and $E\in \cE$. 
Also suppose that  $(P_j)_{j\in J}$ is an increasing net  
of  projections in $\ell_\infty(X)$ converging to $1$ strongly. 
If  $T\in B(\ell_2(X))$ cannot be $\e$-$E$-approximated, 
then $TP_\lambda$ cannot be $\e$-$E$-approximated 
for all large enough $\lambda$. 
\end{lemma}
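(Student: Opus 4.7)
The plan is to prove the contrapositive: assume that $TP_\lambda$ admits an $\e$-$E$-approximation for cofinally many $\lambda\in J$, and construct an $\e$-$E$-approximation to $T$ itself. For each such $\lambda$ pick $S_\lambda\in\cstu(X)$ with $\supp(S_\lambda)\subseteq E$ and $\|TP_\lambda-S_\lambda\|\leq\e$. Since $\|P_\lambda\|\leq 1$, the net $(S_\lambda)$ is norm-bounded by $\|T\|+\e$.

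The central step is a compactness extraction. The closed ball of radius $\|T\|+\e$ in $B(\ell_2(X))$ is compact in the weak operator topology (WOT) by Banach--Alaoglu, so $(S_\lambda)$ has a WOT-convergent subnet; let $S$ denote its limit. The support condition transfers automatically: for every $(x,x')\notin E$, $\langle S_\lambda\delta_x,\delta_{x'}\rangle=0$ for all $\lambda$, and WOT convergence forces $\langle S\delta_x,\delta_{x'}\rangle=0$. Thus $\supp(S)\subseteq E$, and since $S$ is a bounded operator with support in $E\in\cE$, the very definition of the algebraic uniform Roe algebra places $S$ in $\cstu[X]\subseteq\cstu(X)$.

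It remains to verify $\|T-S\|\leq\e$. For arbitrary $\xi,\eta\in\ell_2(X)$, strong convergence $P_\lambda\to 1$ gives $\langle TP_\lambda\xi,\eta\rangle\to\langle T\xi,\eta\rangle$, and WOT convergence of the subnet gives $\langle S_\lambda\xi,\eta\rangle\to\langle S\xi,\eta\rangle$. Passing to the limit in $|\langle(TP_\lambda-S_\lambda)\xi,\eta\rangle|\leq\e\|\xi\|\|\eta\|$ yields $|\langle(T-S)\xi,\eta\rangle|\leq\e\|\xi\|\|\eta\|$, and a supremum over unit vectors gives $\|T-S\|\leq\e$.

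The principal thing to get right is that a single WOT-convergent subnet delivers all three properties at once: membership in $\cstu(X)$, the support containment $\supp(S)\subseteq E$, and the norm estimate $\|T-S\|\leq\e$. Each of these is encoded by matrix coefficients $\langle\cdot\,\delta_x,\delta_{x'}\rangle$, which are by definition WOT-continuous, so the extracted subnet handles all three conditions simultaneously. No uniform-local-finiteness hypothesis enters, which is consistent with the statement of the lemma.
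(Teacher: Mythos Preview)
Your proof is correct and follows essentially the same approach as the paper's: extract a WOT-convergent subnet of the bounded approximants $S_\lambda$, observe that the limit $S$ inherits $\supp(S)\subseteq E$, and compare $T$ with $S$ via matrix coefficients. The only cosmetic differences are that you handle the cofinal subset explicitly (the paper writes ``for every $j\in J$'' somewhat loosely) and that you prove $\|T-S\|\leq\e$ directly, whereas the paper assumes $\|T-S\|>\e$ and reaches a contradiction.
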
 

\begin{proof} Suppose otherwise. For every $j\in J$ fix an  $\e$-$E$-approximation $S_j$ to $T P_j$. 
Then $\|S_j\|\leq \|T\|+\e$ for all $j\in J$. Since the norm-bounded balls of $B(\ell_2(X))$ 
are compact in the weak operator topology, 
by going to a subnet if necessary we may assume that the  $(S_j)_{j\in J}$ converges to some
$S\in B(\ell_2(X))$
in the weak operator topology. 
Clearly,  $\supp(S)\subseteq \bigcup_{j\in J} \supp(S_j)$. So,  $S\in \cstu(X)$ and, by our assumption,  
$\|T-S\|>\e$. Choose unit vectors  $\xi$ and $\eta$  in $\ell_2(X)$ such that $|\langle (T-S)\xi,\eta\rangle|>\e$. 
Since $\lim_{j\in J} P_j \xi=\xi$ in norm, we have
 $\lim_{j\in J} |\langle (T P_j -S_j) \xi, \eta\rangle|>\e$
 contradicting the assumption that $S_j$ is a $\eps$-$E$-approximation of $TP_j$. 
 \end{proof} 

\begin{lemma} \label{L.eE.cpct}
Suppose $(X,\cE)$ is a coarse space and $K\subseteq \cstu(X)$ is compact in the norm 
topology. Then for every $\e>0$ there exists $E\in \cE$ such that every $T\in K$ can be  
$\e$-$E$-approximated. 
\end{lemma}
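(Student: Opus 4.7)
The plan is to combine the compactness of $K$ with the density of $\cstu[X]$ in $\cstu(X)$ in a straightforward way: total boundedness will reduce the problem from the whole compact set $K$ to a finite $\e/2$-net, each of whose elements can be individually approximated within $\e/2$ by a finitely supported element of $\cstu[X]$, and then axiom (iv) of a coarse structure (closure under finite unions) will let us assemble the individual support sets into a single entourage $E \in \cE$ that works uniformly.

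More concretely, fix $\e > 0$. Since $K$ is norm-compact, it is totally bounded, so there exist $T_1, \dots, T_n \in K$ such that $K \subseteq \bigcup_{i=1}^n B(T_i, \e/2)$. Using that $\cstu[X]$ is norm-dense in $\cstu(X)$, for each $i$ pick $S_i \in \cstu[X]$ with $\|T_i - S_i\| \leq \e/2$. By definition of $\cstu[X]$, each $S_i$ satisfies $\supp(S_i) \subseteq E_i$ for some $E_i \in \cE$. Set
\[
E = \bigcup_{i=1}^n E_i,
\]
which lies in $\cE$ by the closure of coarse structures under finite unions.

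Now given any $T \in K$, choose $i$ with $\|T - T_i\| \leq \e/2$. Then $S_i \in \cstu(X)$, $\supp(S_i) \subseteq E_i \subseteq E$, and the triangle inequality gives
\[
\|T - S_i\| \leq \|T - T_i\| + \|T_i - S_i\| \leq \e/2 + \e/2 = \e,
\]
so $S_i$ is an $\e$-$E$-approximation to $T$, as required.

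There is essentially no obstacle here; the only point deserving a sentence of attention is that $\supp(S_i) \subseteq E_i$ for an entourage $E_i$ is exactly the definition of membership in the algebraic uniform Roe algebra, and that condition (iv) in Definition~\ref{DefiCoarseSpace} guarantees $E \in \cE$. The lemma serves as the base case for handling arbitrary norm-bounded situations (combined with Lemmas~\ref{L.eEapprox}, \ref{L.eEapprox2}, and~\ref{L.eE2}) in the inductive/Baire-category argument leading to Theorem~\ref{T.Approximation}.
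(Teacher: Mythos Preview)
Your proof is correct and is essentially the same argument as the paper's: take a finite $\e/2$-net in $K$, $\e/2$-approximate each net element by something in $\cstu[X]$, and use that $\cE$ is closed under finite unions (equivalently, directed) to obtain a single $E$ that works for all of them via the triangle inequality. The paper's version is simply more terse.
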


\begin{proof} If $K$ is finite then this is true because $\cE$ is directed. 
For the general case, fix a finite $\e/2$-net $K_0$ in $K$ and find $E$ such that every $T\in K_0$
can be $\e/2$-$E$-approximated. For  $T'\in K$ fix $T\in K_0$ such that $\|T'-T\|<\eps$.  Lemma \ref{L.eEapprox} implies that $T'=T'-T+T$ can be $\eps$-$E$-approximated. Since $T'\in K$ was arbitrary, the conclusion follows.  
\end{proof}

In what follows, it will be convenient to write $\bar\lambda$ for $(\lambda_j)_{j\in J}\in \D^J$. 

\begin{lemma} \label{L.Approx} Suppose  $(X,\cE)$ is a small 
and  uniformly locally finite coarse space. Suppose $(T_j)_{j\in J}$ is a family of   operators in $\cstu(X)$ such 
that 
for every $\bar\lambda\in \D^J$ the series $\sum_{j\in J} \lambda_j T_j$ strongly converges 
to an operator $T_{\bar\lambda}\in \cstu(X)$. 
 Then for every $\e>0$ there exists $E\in \cE$ such that 
$T_{\bar\lambda}$ can be $\e$-$E$-approximated for all $\bar\lambda\in\D^J$. 
\end{lemma}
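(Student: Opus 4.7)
The plan is to establish Lemma~\ref{L.Approx} by a Baire category argument on the compact Hausdorff space $\D^J$. I will show that the ``approximability'' sets
\[
A_{E,\delta}:=\{\bar\lambda\in\D^J:T_{\bar\lambda}\text{ is }\delta\text{-}E\text{-approximable}\}
\]
are closed, and that if none of them equals $\D^J$ for tolerance $\eps/3$ then together they cover $\D^J$ by too many nowhere dense sets for smallness to allow. First I would do some preparatory bookkeeping: applying the strong convergence of $\sum_j\lambda_jT_j$ at $\bar\lambda\equiv 1$ and pairing with $\delta_x,\delta_y$ forces $\sum_j|\langle T_j\delta_x,\delta_y\rangle|<\infty$ for every $(x,y)\in X\times X$ (an unordered convergent series of complex numbers converges absolutely). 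This lets me discard indices with $T_j=0$ and conclude WLOG $|J|\leq|X|$, and the same reasoning with arbitrary $\xi,\eta$ in place of $\delta_x,\delta_y$ produces pointwise summable majorants $(|\langle T_j\xi,\eta\rangle|)_j$ which, via dominated convergence, prove $\bar\lambda\mapsto T_{\bar\lambda}$ continuous from $\D^J$ (product topology) into $(B(\ell_2(X)),\mathrm{WOT})$. Two applications of the uniform boundedness principle then produce $M:=\sup_{\bar\lambda}\|T_{\bar\lambda}\|<\infty$.

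Closedness of each $A_{E,\delta}$ follows by a standard compactness argument. If $\bar\lambda^{(n)}\to\bar\lambda$ and $S_n\in\cstu(X)$ satisfy $\supp(S_n)\subseteq E$ and $\|T_{\bar\lambda^{(n)}}-S_n\|\leq\delta$, then $\|S_n\|\leq M+\delta$, so weak-$*$ compactness of norm-bounded operator balls yields a WOT cluster point $S$. Its support stays in $E$ entry-by-entry, WOT-continuity of $\bar\lambda\mapsto T_{\bar\lambda}$ combined with WOT-lower-semicontinuity of the norm gives $\|T_{\bar\lambda}-S\|\leq\delta$, and the matrix-coefficient characterization after the definition of $\cstu[X]$ guarantees $S\in\cstu[X]\subseteq\cstu(X)$.

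The critical step is the following dichotomy: \emph{if $A_{E,\eps'}$ has nonempty interior, then $A_{E\cup E_G,3\eps'}=\D^J$ for some $E_G\in\cE$.} Suppose the interior contains a basic open cylinder with finite base $G\subset J$ centered at some $\mu^0\in\D^G$. For any $\bar\nu\in\D^{J\setminus G}$, the two tuples obtained by fixing the $G$-coordinates at $\mu^0$ and putting $\pm\bar\nu/2$ on $J\setminus G$ both sit in that cylinder, so Lemma~\ref{L.eEapprox} makes their difference $T_{(0,\bar\nu)}=\sum_{j\notin G}\nu_jT_j$ into a $2\eps'$-$E$-approximable operator. Now decompose any $\bar\lambda\in\D^J$ as $\bar\lambda|_G+\bar\lambda|_{J\setminus G}$: the second piece is $2\eps'$-$E$-approximable by the above, while the first lies in the norm-compact set $\cK_G=\{\sum_{j\in G}\nu_jT_j:\bar\nu\in\D^G\}\subset\cstu(X)$, which Lemma~\ref{L.eE.cpct} covers with a single entourage $E_G\in\cE$ and tolerance $\eps'$. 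Summing the two approximations yields $3\eps'$-$(E\cup E_G)$-approximability of $T_{\bar\lambda}$, uniformly in $\bar\lambda$.

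Given all this, the lemma will follow by contradiction: fix $\eps>0$ and set $\eps'=\eps/3$; if some $A_{E,\eps'}$ has nonempty interior, the dichotomy supplies the entourage witnessing the conclusion for tolerance $\eps=3\eps'$, and otherwise each $A_{E,\eps'}$ is closed with empty interior, hence nowhere dense. Since every $T_{\bar\lambda}\in\cstu(X)$ is $\eps'$-approximable by an element of $\cstu[X]$, the identity $\D^J=\bigcup_{E\in\cE}A_{E,\eps'}$ realizes $\D^J$ as a union indexed by a cofinal subfamily of $\cE$ of cardinality at most $\max(|\cE|_\text{min},\aleph_0)$. But smallness combined with $|J|\leq|X|$ and the monotonicity in Lemma~\ref{L.cov} gives $\cov(\D^J)\geq\cov(\D^{|X|})>|\cE|_\text{min}$ (with the countable case absorbed by the Baire category theorem), contradicting the existence of such a cover by nowhere dense sets. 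The main obstacle is the dichotomy step: one must carefully exploit both the affine structure of $\bar\lambda\mapsto T_{\bar\lambda}$ (to split off the tail from an open cylinder) and the norm-compactness of the finite-dimensional ``head'' $\cK_G$ (to absorb it with one entourage via Lemma~\ref{L.eE.cpct}), at the tolerable cost of tripling the approximation tolerance.
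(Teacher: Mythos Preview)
Your proof is correct and follows the same Baire-category strategy as the paper: cover $\D^J$ by the closed sets $A_{E,\delta}$, show that under the negation of the conclusion they are all nowhere dense, and contradict smallness. The technical execution differs in two places worth noting. For closedness, you establish WOT-continuity of $\bar\lambda\mapsto T_{\bar\lambda}$ up front (via the summable majorant $\sum_j|\langle T_j\xi,\eta\rangle|$) and then pass to a WOT-cluster point of the approximants; the paper instead reduces to a finite-rank cutoff $T_{\bar\lambda}P$ via Lemma~\ref{L.eE2} and argues by perturbation using Lemmas~\ref{L.eEapprox2}. For the ``nonempty interior forces everything'' dichotomy, your $\pm\bar\nu/2$ differencing trick to isolate the tail $T_{(0,\bar\nu)}$ is a clean variant of the paper's passage from \eqref{*.eE} to \eqref{**.eE}; both amount to the observation that approximability on a basic cylinder propagates (at the cost of doubling or tripling the tolerance and enlarging $E$ by a single compact-head entourage) to all of $\D^J$. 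Your route avoids Lemmas~\ref{L.eEapprox2} and~\ref{L.eE2} at the price of the preliminary uniform-boundedness step, which is a fair trade.
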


\begin{proof} 
Without loss of generality,  assume that $X$ is infinite and that $T_j\neq 0$ for all $j$. Hence,
it follows that $|J|\leq |X|$. Otherwise, by a counting argument 
there is a pair $(x,x')$ in $X^2$ such that 
the set $\{j\in J\mid (T_j)_{xx'}\neq 0\}$ is of cardinality $|J|$. Since  $X$ is infinite,  $J$ is uncountable and 
therefore for some $\e>0$ the set $\{j\in J\mid |(T_j)_{xx'}|\geq \e\}$ is uncountable. This clearly contradicts the assumption that $\sum_{j\in J} \lambda_j T_j$ 
converges strongly to some operator $T$. 

 For each finite $I\subset J$, write 
\[\cZ_I=\{\bar\lambda\in\D^J\mid \forall j\in I, \ \lambda_j=0\}\text{ and }\cY_I=\{\bar\lambda\in \D^J\mid \forall j\not\in I,\  \lambda_j=0\}.\] 
Assume that the conclusion of the lemma fails. Then the following holds. 
\begin{enumerate}
\popcounter
\item\label{*.eE}  $(\exists \e>0)(\forall E\in \cE) (\exists \bar\lambda\in\D^J)  T_{\bar\lambda}$ is not $\e$-$E$-approximated. 
\pushcounter
\end{enumerate}
This implies a  stronger condition. In what follows, the symbol $\forall_{\text{fin}}$ abbreviates `for all finite'.  
\begin{enumerate}
\popcounter
\item\label{**.eE}  $(\exists \e'>0)(\forall E\in\cE)(\forall_{\text{fin}}I\subset J) (\exists \bar\lambda\in \cZ_I)  T_{\bar\lambda}$ is not $\e'$-$E$-approximated. 
\pushcounter
\end{enumerate}
To prove this, suppose \eqref{**.eE} fails for $\e'>0$ and fix $E\in\cE$ and a finite $I\subset J$ such that for all $\bar\lambda\in \cZ_I$
the operator $T_{\bar\lambda}$ can be $\e'$-$E$-approximated. 
For each $\bar s\in \cY_I$  the operator $T_{\bar s}$ belongs to $\cstu(X)$.
Since the function $\bar s\in \cY_I\mapsto T_{\bar s}\in \cstu(X)$ 
is norm-continuous and $\cY_I$ is, being homeomorphic to $\D^I$, compact, 
by Lemma~\ref{L.eE.cpct}, there exists $E'$ such that $T_{\bar s}$ 
can be $\e'$-$E'$-approximated, for all $\bar s\in \cY_I$. We may assume $E\subset E'$.  
For every $\bar\lambda\in \D^J$ the operator $T_{\bar\lambda}$ can be written 
as a sum of an operator indexed in $\cY_I$ and one indexed in $\cZ_I$. 
By Lemma~\ref{L.eEapprox} 
it  can be $2\e'$-$E'$-approximated, and therefore \eqref{*.eE} fails for $\e=2\e'$. As $\eps'$ 
is arbitrary, this contradicts~\eqref{*.eE}.

Fix $\eps=\eps'/2$, where $\eps'$ is given by \eqref{**.eE}.

\begin{claim}
For each $E\in\cE$, the subset 
\[U_E=\Big\{\bar{\lambda}\in \D^J\mid T_{\bar{\lambda}}\text{ is }\eps\text{-}E\text{-approximated}\Big\}\]
is closed and has empty interior. 
\end{claim}

\begin{proof}
Suppose  that  $U_E$ is not closed and pick $\bar\lambda\in U^\complement_E\setminus \text{Int}(U^\complement_E)$. Since $T_{\bar\lambda}$ is not $\eps$-$E$-approximated, by Lemma \ref{L.eE2}, there exists a finite rank projection $P\in \ell_\infty(X)$ such that $T_{\bar\lambda}P$ is not $\eps$-$E$-approximated. Fix $\delta>0$. Since $\sum_{j\in J} \theta_j T_j$ strongly converges to a bounded linear operator for every $\bar\theta\in \D^J$, there exists a finite  $I\subset J$  such that  $\|T_{\bar\theta} P\|<\delta$, for all $\bar\theta\in \cZ_I$. Let $\bar\lambda=\bar\lambda_I+\bar\lambda_\infty$, for some $\bar\lambda_I\in \cY_I$ and $\bar\lambda_\infty\in \cZ_I$. As $\bar\lambda\not\in\text{Int}(U^\complement_E)$, there exists $\bar\theta_I\in \cY_I$ and $\bar\theta_\infty\in \cZ_I$ such that $\|T_{\bar\lambda_I}-T_{\bar\theta_I}\|\leq \delta$ and such that  $T_{\bar\theta_I}+T_{\bar\theta_\infty}$ is $\eps$-$E$-approximated. Since
\[T_{\bar\lambda}P=(T_{\bar\theta_I}+T_{\bar\theta_\infty})P+T_{\bar\lambda_I}P-T_{\bar\theta_I}P-T_{\bar\theta_\infty}P+T_{\bar\lambda_\infty}P,\]
Lemma \ref{L.eEapprox} and Lemma \ref{L.eEapprox2}(i) imply that  $T_{\bar\lambda}P$ is $(\eps+3\delta)$-$E$-approximated. As $\delta$ is arbitrary,  Lemma \ref{L.eEapprox2}(ii) implies that $T_{\bar\lambda}P$ is $\eps$-$E$-approximated; contradiction. 

In order to notice that $U_E$ has empty interior, let $\bar\lambda\in \D^J$ and fix a finite $I\subset J$. Let $\bar\lambda_I$ be defined as in the previous paragraph and pick $E'\in\cE$ such that $T_{\bar\lambda_I}$ is $\eps$-$E'$-approximated. Without loss of generality, $E\subset E'$.	 By our choice of $\eps$, there exists $\bar\theta\in \cZ_I$ such that $T_{\bar\theta}$ cannot be $2\eps$-$E'$-approximated. Hence, by Lemma \ref{L.eEapprox}, $T_{\bar\lambda_I}+T_{\bar\theta}$ is not $\eps$-$E'$-approximated. Since $E\subset E'$, this implies $\bar\lambda_I+\bar\theta\not\in U_E$.
\end{proof}

Let $\cU$ be a set of generators of $\cE$ of cardinality $\max\{\aleph_0,|\cE|_{\text{min}}\}$. 
Without loss of generality,  assume that  every element of $\cE$ is contained 
in some element of $\cU$. It follows that
\begin{equation}\label{EqDDD}
\D^J=\bigcup_{E\in\cU}U_E.
\end{equation}
Since   $(|\cE|_{\text{min}},|X|)$ is   small and $|J|\leq |X|$, it follows that $\text{cov}(\D^J)> \max\{\aleph_0,|\cE|_{\text{min}}\}$. On the other hand, since $U_E$ is nowhere dense for every $E\in \cU$, 
\eqref{EqDDD} implies that  $\text{cov}(\D^J)\leq \max\{\aleph_0,|\cE|_{\text{min}}\}$;   contradiction. 
\end{proof}

\begin{proof}[Proof of Theorem~\ref{T.Approximation}]
Fix $\e>0$ and  $F\in \cF$.  
  By Lemma~\ref{LemmaSpakulaWillett},  $\Phi$ is implemented by a unitary operator 
 and therefore continuous with respect to the strong operator topology.  
Let $(T_j)_{j\in J}$ 
be the family of all   $\Phi(e_{yy'})$, for $(y,y')\in F$.  Since $(X,\cE)$  is small cardinal,   Lemma~\ref{L.Approx} implies that  
there exists $E\in \cE$ such that 
$T_{\bar\lambda}$ can be $\e$-$E$-approximated for all $\bar\lambda\in\D^J$, as required.  
\end{proof}

It would be desirable to have $E$ as in the conclusion of Theorem~\ref{T.Approximation} 
depend on~$F$ only, instead of both $F$ and $\e$. Alas, in general this is not true;  see Example~\ref{Ex.Approximate}.

Lemma \ref{LemmaCoarseSpakulaWillett} below
  plays the role of Lemma 4.5 of \cite{SpakulaWillett2013} in our proof. This will be used later to show that the maps we  obtain between $(X,\cE)$ and $(Y,\cF)$ are coarse.
Before stating and proving it, we prove a technical result which will be essential throughout this paper.

\begin{lemma}\label{LemmaNetsCoarseStructure}
Suppose $(X,\cE)$ is a uniformly locally finite coarse space and  $E\in \cE$. 
Let $\cF$ be a directed set and let $(x^F_1)_{F\in \cF}$ and $(x^F_2)_{F\in \cF}$ be nets in $X$ such that $(x_1^F,x^F_2)\in E$, for all $F\in \cF$. Then there exist subsets $I$ and $J$ of $\cF$ and $\varphi:I\to J$ such that $I$ is cofinal 
and the following holds: 
 \begin{enumerate}[(i)]
\item $x_1^F\neq x_1^{F'}$ and $x_2^F\neq x_2^{F'}$, for all distinct $F$ and $F'$ in $J$, and 
\item $x_1^F= x_1^{\varphi(F)}$ and $x_2^F= x_2^{\varphi(F)}$, for all $F\in I$.
\end{enumerate} 
\end{lemma}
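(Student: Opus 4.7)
The plan is to reduce the problem to a finite coloring argument using uniform local finiteness, and then use directedness of $\cF$ to extract a cofinal piece. First I would replace $E$ by the symmetric entourage $E \cup E^{-1}$, which remains in $\cE$, so that the constant $n := \sup_{x \in X}|E_x|$ is finite and bounds both $|E_x|$ and $|E^x|$ uniformly on $X$.

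Next, I would consider the set of pairs $A := \{(x_1^F, x_2^F) : F \in \cF\} \subseteq E$. The key observation is that uniform local finiteness forces each $(a,b) \in A$ to share a coordinate with at most $(n-1) + (n-1) = 2n - 2$ other pairs in $A$. Define the ``conflict graph'' on $A$ by making distinct pairs adjacent whenever they agree in at least one coordinate. This graph has degree at most $2n - 2$, so a (possibly transfinite) greedy coloring --- standard because at each stage only finitely many colors are forbidden --- produces a partition $A = A_1 \sqcup \cdots \sqcup A_{2n-1}$ in which each class $A_k$ is \emph{coordinate-injective}, meaning distinct pairs in $A_k$ disagree in both coordinates simultaneously.

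Setting $\cF_k := \{F \in \cF : (x_1^F, x_2^F) \in A_k\}$ yields a finite decomposition $\cF = \bigsqcup_{k=1}^{2n-1} \cF_k$. Since $\cF$ is directed, at least one $\cF_k$ must be cofinal: otherwise, I could pick $G_k \in \cF$ witnessing the failure of cofinality for each $k$, take an upper bound $G$ of $\{G_1, \ldots, G_{2n-1}\}$, and derive the contradiction that $G \in \cF$ lies in no $\cF_k$. Fix such a cofinal $I := \cF_k$. For each pair in $A_k$ that actually occurs as some $(x_1^F, x_2^F)$ with $F \in I$, pick one such representative and collect these into $J \subseteq I$. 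Condition~(i) then follows from coordinate-injectivity of $A_k$, and defining $\varphi(F)$ to be the representative in $J$ carrying the same pair as $F$ verifies~(ii) directly.

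I do not anticipate serious obstacles beyond bookkeeping --- the substance of the argument is concentrated in the single observation that uniform local finiteness bounds, in a uniform way, how many pairs in $A$ can share a coordinate with a given one, which is exactly what makes the finite coloring possible. The one step to be slightly careful about is the greedy coloring when $A$ is uncountable, but well-ordering $A$ and recursing makes this routine.
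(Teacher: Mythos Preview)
Your proof is correct and takes a genuinely different route from the paper's. The paper invokes its earlier Proposition~\ref{partition} to partition $X$ into finitely many $E^{(3)}$-separated pieces, looks at where $x_1^F$ lands to extract a cofinal $I$, and then argues (in two steps, once for each coordinate, passing to a further cofinal subset) that on $I$ the equivalence $x_1^F=x_1^{F'}\Leftrightarrow x_2^F=x_2^{F'}$ holds. Your approach instead partitions the set of \emph{pairs} $A=\{(x_1^F,x_2^F)\}$ directly, via a bounded-degree conflict graph and greedy coloring, so that coordinate-injectivity of each color class handles both coordinates at once. This is slightly more self-contained (no appeal to Proposition~\ref{partition}, no separate symmetry pass), at the cost of introducing the graph-coloring language. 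Your cofinality argument for a finite partition of a directed set is the same as the paper's, and the construction of $J$ and $\varphi$ by choosing representatives is identical in spirit. One cosmetic point: your contradiction in the cofinality step is not that ``$G$ lies in no $\cF_k$'' but rather that $G\in\cF_j$ for some $j$ while $G\geq G_j$, contradicting the choice of $G_j$; the argument is right, just phrase it that way.
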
 

\begin{proof}
By Proposition \ref{partition}, there exists a partition 
\[
X=X_1\sqcup\ldots\sqcup X_k,
\]
such that $(x_1,x_2)\not\in E^{(3)}$, for all $i\leq k$ and all distinct $x_1,x_2\in X_i$. For each $i\leq k$, let 
\[
I_i=\{F\in \mathcal{F}\mid x^F_1\in X_i\}.
\]
Since $\cF$ is directed, there exists  $i\leq k$
 such that $I_i$ is cofinal in $\mathcal{F}$. 
Let $I=I_i$. 
Fix  $F$ and $F'$ in $I$ such that      
 $x_1^F\neq x_1^{F'}$. Then     $(x_1^{F}, x_1^{F'})\not\in E^{(3)}$ and  
since $(x_1^G,x_2^G)\in E$ for all $G\in J$, we conclude that
  $(x_2^{F}, x_2^{F'})\not\in E$, and in particular $x_2^F\neq x_2^{F'}$. 

By an analogous argument and going to a cofinal subset of $I$ if necessary, 
we may assume that $x_2^F\neq x_2^{F'}$ implies  
$x_1^F\neq x_1^{F'}$ for all $F$ and $F'$ in $I$. 

 To recap, we may assume the following:  
\begin{equation}\label{equalifA}
x_1^{F}= x_1^{F'}\ \ \text{if and only if} \ \ x_2^F= x^{F'}_2, \ \ \text{for all}\ \  F,F'\in I.
\end{equation}
Let $\tilde{X}=\{x^F_1\mid F\in I\}$. Fix an assignment  $x\in \tilde{X}\mapsto F_x\in I$ such that $x^{F_x}_1=x$, for all $x\in \tilde{X}$.  Let $J=\{F_x\in I\mid x\in \tilde{X}\}$ be the image of this assignment. Notice that $x^F_1\neq x^{F'}_1$, for all distinct $F, F'\in J$. So, by the backwards implication of \ref{equalifA}, we have that $x^F_2\neq x^{F'}_2$, for all distinct $F, F'\in J$. 

Define $\varphi:I\to J$ by letting $\varphi(F)=F_{x^F_1}$, for all $F\in I$.  By the definition of $\varphi$ and the forward implication in \ref{equalifA},  we have that 
\begin{equation*}
x^F_1=x^{\varphi(F)}_1\ \ \text{and}\ \ x^F_2=x^{\varphi(F)}_2,\ \ 
\text{for all }F\in I.
\end{equation*}
This completes the proof of the lemma.
\end{proof}

If the spaces $(X,\cE)$ and $(Y,\cF)$ are metrizable, the assumption on $\cE$ and $\cF$  in the following lemma follows from 
the Baire category theorem. 

\begin{lemma}\label{LemmaCoarseSpakulaWillett}
Suppose that $(X,\cE)$ and  $(Y,\cF)$ are small 
and uniformly locally finite coarse spaces. 
Let $U:\ell_2(X)\to \ell_2(Y)$ be a unitary operator which spatially implements a    isomorphism $\cstu(X)\to \cstu(Y)$.  
 Then for all $E\in\mathcal{E}$ and all $\delta>0$ the following set belongs to $\cF$:  
\[
F_{E,\delta}\coloneqq \{(y_1, y_2)\in Y^2\mid  
(\exists (x_1,x_2)\in E) (|\langle U\delta_{x_1},\delta_{y_1}\rangle|\geq \delta\wedge|\langle U\delta_{x_2},\delta_{y_2}\rangle|\geq \delta)\}. 
\]
\end{lemma}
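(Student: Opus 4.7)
The plan is to apply Theorem~\ref{T.Approximation} to the spatially implemented isomorphism $\Phi=\Ad U\colon \cstu(X)\to \cstu(Y)$ (with the roles of $X$ and $Y$ swapped from the way the theorem is stated), and then, for each candidate pair $(y_1,y_2)\in F_{E,\delta}$, to exhibit a single rank-one operator in $\cstu(X)$ whose support lies in $E$ and whose image under $\Phi$ has a matrix entry at $(y_1,y_2)$ of modulus at least $\delta^2$.

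Concretely, I would fix $E\in\cE$ and $\delta>0$, set $\e\coloneqq \delta^2/2$, and use Theorem~\ref{T.Approximation} to produce a \emph{single} $F\in\cF$ with the property that for every $T\in\cstu(X)$ with $\supp(T)\subseteq E$ there is $S\in\cstu(Y)$ with $\supp(S)\subseteq F$ and $\|UTU^*-S\|\leq \e$. For any $(y_1,y_2)\in F_{E,\delta}$ with witness $(x_1,x_2)\in E$, the rank-one operator
\[
T\coloneqq e_{x_2x_1}
\]
has $\supp(T)=\{(x_1,x_2)\}\subseteq E$, and a direct computation using unitarity of $U$ gives
\[
\langle UTU^*\delta_{y_1},\delta_{y_2}\rangle=\overline{\langle U\delta_{x_1},\delta_{y_1}\rangle}\,\langle U\delta_{x_2},\delta_{y_2}\rangle,
\]
whose modulus is at least $\delta^2$. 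If $S\in\cstu(Y)$ is the promised $\e$-$F$-approximation to $UTU^*$, then
\[
|\langle S\delta_{y_1},\delta_{y_2}\rangle|\geq \delta^2-\e=\tfrac{\delta^2}{2}>0,
\]
so $(y_1,y_2)\in\supp(S)\subseteq F$. Hence $F_{E,\delta}\subseteq F$, and the downward-closure axiom of a coarse structure then gives $F_{E,\delta}\in\cF$.

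The only point requiring care is the indexing convention for the rank-one operator: one must choose $e_{x_2x_1}$ rather than $e_{x_1x_2}$ so that the matrix entry of $UTU^*$ at the coordinate $(y_1,y_2)$ picks out the correlations $\langle U\delta_{x_i},\delta_{y_i}\rangle$ with \emph{matching} indices rather than mixed ones. Past that small bookkeeping matter there is no genuine obstacle; the smallness hypothesis on the two spaces is already absorbed by the invocation of Theorem~\ref{T.Approximation}, and Lemma~\ref{LemmaNetsCoarseStructure} is not needed here (it will presumably be used downstream to distil an actual coarse map between $X$ and $Y$ from the information encoded in entourages of this form).
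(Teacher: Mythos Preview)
Your proof is correct and is genuinely more direct than the paper's. The paper argues by contradiction: assuming $F_{E,\delta}\notin\cF$, it builds nets $(x_1^F,x_2^F)_{F\in\cF}$ in $E$ and $(y_1^F,y_2^F)_{F\in\cF}$ witnessing the failure, invokes Lemma~\ref{LemmaNetsCoarseStructure} to pass to a subfamily $J$ on which the $x$-coordinates are injective, and then applies the approximation result to the family $(\Phi(e_{x_1^Fx_2^F}))_{F\in J}$ to reach a contradiction with the single matrix-entry estimate $|\langle \Phi(e_{x_1^{F_1}x_2^{F_1}})\delta_{b(y)},\delta_{a(y)}\rangle|\geq\delta^2$. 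You instead apply Theorem~\ref{T.Approximation} once to the entourage $E$ and then verify $(y_1,y_2)\in F$ pointwise by testing against the single operator $e_{x_2x_1}$; this sidesteps the contradiction, the nets, and Lemma~\ref{LemmaNetsCoarseStructure} altogether. The paper's detour through Lemma~\ref{LemmaNetsCoarseStructure} is not wasted effort in the broader context---that lemma and the net-refinement idiom recur in Sections~\ref{SubsectionUnformRoeAlgDisjointUnion} and~\ref{SectionIsoAlgebraicRoeAlg}---but for this particular lemma your argument is strictly shorter and extracts the same $F$ (indeed the same matrix-entry inequality) with less scaffolding.
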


\begin{proof}
Suppose otherwise. Then there exist $E\in\mathcal{E}$ and $\delta>0$ 
such that $F_{E,\delta}\notin \cF$. Therefore for every 
 $F\in\mathcal{F}$ there exist $(x^F_1,x^F_2)\in E$ and $(y^F_1,y^F_2)\in Y^2$ such that  
$|\langle U\delta_{x^F_1},\delta_{y^F_1}\rangle|\geq \delta$, $|\langle U\delta_{x^F_2},\delta_{y^F_2}\rangle|\geq \delta$,
 and  $(y^F_1,y^F_2)\not\in F$. 
 
Order   $\mathcal{F}$ by the inclusion; it is a directed set.  
By Lemma \ref{LemmaNetsCoarseStructure}, there exist a cofinal $I\subset \mathcal{F}$, 
   $J\subset \mathcal{F}$,  and a map $\varphi:I\to J$ such that
 \begin{enumerate}[(i)]
\item\label{i.LCSW}  $x_1^F\neq x_1^{F'}$ and $x_2^F\neq x_2^{F'}$, for all distinct $F,F'\in J$, and 
\item\label{ii.LCSW}  $x_1^F= x_1^{\varphi(F)}$ and $x_2^F= x_2^{\varphi(F)}$, for all $F\in I$.
\end{enumerate} 
Fix  $\bar\lambda\in \D^J$. 
Since $(x_1^F,x_2^F)\in E$ for all $F\in I$ and   $\sup_{x\in X} \max(|E_x|, |E^x|)$ 
is finite, 
 \eqref{i.LCSW} implies that 
the sum
$\sum_{F\in J} \lambda_F e_{x^F_1x^F_2}$ 
   converges in the strong operator topology to an operator in $B(\ell_2(X))$. 
Since its support is included in $E$, 
 this operator belongs to 
    $\cstu(X)$.   
   
For each $F\in\cF$, let $e(F)=e_{x^F_1x^F_2}$. Hence, as $\Phi$ is continuous in the strong operator topology, the sum 
\[
\sum_{F\in J}\lambda_F\Phi( e(F))
\]
converges strongly to an operator in $\cstu(Y)$, for every  $\bar\lambda\in \D^J$. 
By Theorem \ref{T.Approximation}, there exists $F_1\in \mathcal{F}$ such that 
\[
\|\chi_A\Phi( e(F))\chi_B\|<\delta^2,
\]
for all $F\in J$ and all $A,B\subset Y$ which are $F_1$-separated. Since $I$ is cofinal in $\mathcal{F}$, we 
can pick $F_1\in I$. For now on, set 
\[
a(x)=x_1^{F_1}, \  b(x)=x_2^{F_1},\  a(y)=y_1^{F_1},\text{ and }b(y)=y_2^{F_1}
\]
 and notice that $\{a(y)\}$ and $\{b(y)\}$ are $F_1$-separated. By (ii), it follows that
\[
\|\chi_{\{a(y)\}}\Phi(e(F_1))\chi_{\{b(y)\}}\|=\|\chi_{\{a(y)\}}\Phi(e(f(F_1)))\chi_{\{b(y)\}}\|<\delta^2.
\]
Using $ \langle U^*\delta_{b(y)},\delta_{b(x)}\rangle\delta_{a(x)}=e(F_1)U^*\delta_{b(y)}$, we have the following
\begin{align*}
\delta^2& \leq  |\langle \delta_{a(y)},U\delta_{a(x)}\rangle \langle U\delta_{b(x)},\delta_{b(y)}\rangle|\\
&=  |\langle \langle U^*\delta_{b(y)},\delta_{b(x)}\rangle\delta_{a(x)},U^*\delta_{a(y)}\rangle|\\
&=|\langle Ue(F_1)U^*\delta_{b(y)},\delta_{a(y)}\rangle|\\
&=\|\chi_{\{a(y)\}}\Phi(e(F_1))\chi_{\{b(y)\}}\|; 
\end{align*}
 contradiction. 
\end{proof}

We are ready to prove \eqref{ThmRigIsoImpliesCoarseEqui} and \eqref{ThmRigIsoImpliesCoarseEquiMARTIN} from the introduction. 

\begin{theorem} \label{T.ThmRigIsoImpliesCoarseEqui}
 Suppose that $(X,\cE)$ and $(Y,\cF)$ are  uniformly locally finite coarse spaces which are also small. 
If  $ \cstu(X)$ and   $\cstu(Y)$ are rigidly    isomorphic, 
then $X$ and $Y$ are coarsely equivalent. 
\end{theorem}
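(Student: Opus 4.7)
The plan is to extract coarse equivalences $f \colon X \to Y$ and $g \colon Y \to X$ from the unitary implementing the rigid isomorphism, and then use Lemma~\ref{LemmaCoarseSpakulaWillett} to verify that they are coarse and mutually inverse up to closeness.

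First, by Lemma~\ref{LemmaSpakulaWillett}, any isomorphism $\Phi \colon \cstu(X) \to \cstu(Y)$ is spatially implemented by a unitary $U \colon \ell_2(X) \to \ell_2(Y)$. Since the isomorphism is rigid, there exists $\varepsilon > 0$ such that for every $x \in X$ there is some $y \in Y$ with $|\langle U \delta_x, \delta_y\rangle| \geq \varepsilon$, and symmetrically for $U^*$. Using the axiom of choice, define $f \colon X \to Y$ and $g \colon Y \to X$ by selecting, for each $x$, some $f(x) \in Y$ with $|\langle U \delta_x, \delta_{f(x)}\rangle| \geq \varepsilon$, and likewise $g(y) \in X$ with $|\langle U^* \delta_y, \delta_{g(y)}\rangle| \geq \varepsilon$.

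Next I would check that $f$ is coarse. Given $E \in \cE$, apply Lemma~\ref{LemmaCoarseSpakulaWillett} to $U$ and the pair $(E, \varepsilon)$ to obtain $F_{E, \varepsilon} \in \cF$. For any $(x_1, x_2) \in E$, the inequalities $|\langle U\delta_{x_i}, \delta_{f(x_i)}\rangle| \geq \varepsilon$ place $(f(x_1), f(x_2))$ inside $F_{E,\varepsilon}$, so $f$ is coarse. The same argument applied to $U^*$ (which implements the inverse isomorphism $\cstu(Y) \to \cstu(X)$) shows that $g$ is coarse; note that both spaces being small is what allows Lemma~\ref{LemmaCoarseSpakulaWillett} to be invoked in both directions.

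Finally, I would show $g \circ f$ is close to $\Id_X$ and $f \circ g$ close to $\Id_Y$. To handle $g \circ f$, apply Lemma~\ref{LemmaCoarseSpakulaWillett} to $U^*$ with the diagonal entourage $\Delta_Y \in \cF$; this produces
\[
E_{\Delta_Y, \varepsilon} = \bigl\{(x_1, x_2) \in X^2 \mid \exists\, y \in Y,\ |\langle U^* \delta_y, \delta_{x_1}\rangle| \geq \varepsilon \wedge |\langle U^* \delta_y, \delta_{x_2}\rangle| \geq \varepsilon \bigr\} \in \cE.
\]
For each $x \in X$, taking $y = f(x)$ and using $|\langle U^* \delta_{f(x)}, \delta_x\rangle| = |\langle U\delta_x, \delta_{f(x)}\rangle| \geq \varepsilon$ together with $|\langle U^* \delta_{f(x)}, \delta_{g(f(x))}\rangle| \geq \varepsilon$ places $(x, g(f(x)))$ in $E_{\Delta_Y, \varepsilon}$. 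Hence $\{(x, g(f(x))) \mid x \in X\} \subseteq E_{\Delta_Y,\varepsilon} \in \cE$, which is exactly closeness of $g \circ f$ to $\Id_X$. The symmetric argument, with the roles of $U$ and $U^*$ swapped and the diagonal $\Delta_X$ in place of $\Delta_Y$, gives that $f \circ g$ is close to $\Id_Y$, completing the coarse equivalence. The only genuine obstacle is establishing Lemma~\ref{LemmaCoarseSpakulaWillett} itself (hence the smallness hypothesis, which feeds Theorem~\ref{T.Approximation} via Lemma~\ref{L.Approx}); once that is in hand, the deduction above is essentially bookkeeping.
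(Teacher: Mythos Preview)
Your proposal is correct and follows essentially the same approach as the paper: define $f$ and $g$ from the rigidity bound, invoke Lemma~\ref{LemmaCoarseSpakulaWillett} to show both are coarse, and apply the same lemma with the diagonal entourage to show the compositions are close to the identities. The only cosmetic difference is that the paper writes out the closeness of $f\circ g$ to $\Id_Y$ using $F_{\Delta_X,\delta}$ (via $U$) while you write out $g\circ f$ close to $\Id_X$ using $E_{\Delta_Y,\varepsilon}$ (via $U^*$); these are the two halves of the same symmetric argument.
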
 

\begin{proof}
Let $U:\ell_2(X)\to \ell_2(Y)$ be a unitary operator which spatially implements a rigid    isomorphism between $\cstu(X)$ and $\cstu(Y)$. Therefore, there exist  $\delta>0$, $f:X\to Y$ and $g:Y\to X$ such that \[|\langle U\delta_x,\delta_{f(x)}\rangle|\geq \delta\ \ \text{and}\ \  |\langle U^*\delta_y,\delta_{g(y)}\rangle|\geq \delta,\]
for all $x\in X$ and all $y\in Y$.  Lemma \ref{LemmaCoarseSpakulaWillett} implies 
that $f$ and $g$ are coarse maps. Therefore, we only need to verify
 that $g\circ f$ and $f\circ g$  are close to $\Id_X$ and $\Id_Y$, respectively. By our choice of $f$ and $g$, it follows that
\[|\langle U\delta_{g(y)},\delta_{f(g(y))}\rangle|\geq \delta\ \ \text{and}\ \
 |\langle U\delta_{g(y)},\delta_{y}\rangle|=|\langle \delta_{g(y)},U^*\delta_{y}\rangle|\geq \delta,\]
for all  $y\in Y$. Let $F=F_{\Delta_X, \delta}$ be as in the conclusion of 
 Lemma \ref{LemmaCoarseSpakulaWillett}. Then, since $(g(y),g(y))\in \Delta_X$, we have that 
\[(y,f(g(y)))\in F,\]
for all $y\in Y$. This shows that $f\circ g$ is close to $\Id_Y$. Similar arguments show that $g\circ f$ is close to $\Id_X$.  
\end{proof}

We end this section with a metamathematical remark.  

\begin{cor}
($\text{MA}_\kappa$) Suppose that $(X,\cE)$ and $(Y,\cF)$ are $\kappa$-generated uniformly locally finite coarse spaces. 
If  $ \cstu(X)$ and   $\cstu(Y)$ are rigidly    isomorphic, 
then $X$ and $Y$ are coarsely equivalent. \qed
\end{cor}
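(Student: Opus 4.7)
The plan is to deduce this corollary directly from Theorem~\ref{T.ThmRigIsoImpliesCoarseEqui} by showing that, under $\text{MA}_\kappa$, every $\kappa$-generated uniformly locally finite coarse space is small in the sense of Definition~\ref{Def.Small}. Once smallness is established for both $(X,\cE)$ and $(Y,\cF)$, the conclusion is immediate from that theorem.

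First I would unpack the hypotheses. By definition, $(X,\cE)$ being $\kappa$-generated means $|\cE|_{\text{min}}\leq \kappa$, and analogously $|\cF|_{\text{min}}\leq \kappa$. To apply Theorem~\ref{T.ThmRigIsoImpliesCoarseEqui}, I need $\mathrm{cov}(\D^{|X|})>|\cE|_{\text{min}}$ and $\mathrm{cov}(\D^{|Y|})>|\cF|_{\text{min}}$. Both inequalities will follow from a single application of Martin's Axiom.

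Next, I would invoke Lemma~\ref{L.cov}\eqref{cov.1}, which says that $\text{MA}_\kappa$ implies $\mathrm{cov}(\D^\mu)>\kappa$ for every infinite cardinal $\mu$. Applying this with $\mu=|X|$ gives $\mathrm{cov}(\D^{|X|})>\kappa\geq |\cE|_{\text{min}}$, so the pair $(|\cE|_{\text{min}},|X|)$ is small and hence $(X,\cE)$ is small. The same argument with $\mu=|Y|$ yields smallness of $(Y,\cF)$. (If either space is finite there is nothing to prove, since any rigid isomorphism then forces $|X|=|Y|$ via Corollary~\ref{CorCardinality} and both spaces collapse trivially.)

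Finally, with both $(X,\cE)$ and $(Y,\cF)$ small and uniformly locally finite, and with $\cstu(X)$ and $\cstu(Y)$ rigidly isomorphic by hypothesis, Theorem~\ref{T.ThmRigIsoImpliesCoarseEqui} yields that $X$ and $Y$ are coarsely equivalent. There is no real obstacle here: the entire content of the corollary lies in the fact that $\text{MA}_\kappa$ pushes the combinatorial smallness hypothesis past the threshold needed in Theorem~\ref{T.ThmRigIsoImpliesCoarseEqui}, and Lemma~\ref{L.cov}\eqref{cov.1} has already done the set-theoretic work.
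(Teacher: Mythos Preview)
Your proposal is correct and matches the paper's intended approach: the corollary is marked with \qed\ and given no proof, precisely because it follows immediately from Theorem~\ref{T.ThmRigIsoImpliesCoarseEqui} once one observes, via Lemma~\ref{L.cov}\eqref{cov.1}, that $\text{MA}_\kappa$ makes any $\kappa$-generated coarse space small. Your unpacking of this implication is exactly right.
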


\begin{remark}
Since our proof of \eqref{ThmRigIsoImpliesCoarseEquiMARTIN} uses Baire category methods, it 
is  amenable to an extension along the lines of using mild forcing axioms from set theory. This is discussed in the appendix of the arXiv version of the present paper (\cite{BragaFarah2018}).
\end{remark}

\section{Coarse spaces with a small partition}\label{SubsectionUnformRoeAlgDisjointUnion} 
In this section, we give a large class of examples of non-metrizable uniformly locally finite coarse spaces such that a rigid    isomorphism 
between their uniform Roe algebras implies coarse equivalence (i.e., \eqref{I.IsoRig} implies \eqref{I.I}). 
 We prove this by studying coarse spaces with a partition that `behave like a metric space'. As before, the reader not interested in set theory  may  replace the words `small' in the definition below by `metrizable'. 

\begin{defi}\label{DefinitionUniformlyMetrizable}
Let  $(X,\cE)$ be a coarse space. We say that $(X,\cE)$ has a \emph{small partition}
if there exists a partition $X=\bigsqcup_{i\in I}X_i$ such that, letting
\[
\cE_\Delta\coloneqq \left\{\bigsqcup_{i\in I}(X_i\times X_i)\cap E\mid E\in\cE\right\},
\]
the coarse space $(X,\cE_{\Delta})$ is small 	 and $\cE$ is generated by $\cE_\Delta\cup\{E\in \cE\mid |E|<\infty\}$.
\end{defi}

Notice that if $(X,\cE)$ is small, then $(X,\cE)$ trivially admits a small partition. We refer the reader to Example \ref{ExUniformlyMetrizableSpace} for more interesting examples of uniformly locally finite coarse spaces which have a  small partition.

We need some simple  lemmas (which are also used in 
Section \ref{SectionRigidityHilbert}). Given a partition $X=\bigsqcup_{i\in I}X_i$ we identify  the product $\prod_{i\in I} B(\ell_2(X_i))$ with a subalgebra of $B(\ell_2(X))$. 

\begin{lemma}\label{LemmaRestrictionToDiag1}
Consider a partition $X=\bigsqcup_{i\in I}X_i$. If $Q\in\prod_{i\in I} B(\ell_2(X_j))$ and  $R\in B(\ell_2(X))$ are
 such that $\chi_{X_i}R\chi_{X_i}=0$, for all $i\in I$  then  $\|Q\|\leq \|Q+R	\|$. 
\end{lemma}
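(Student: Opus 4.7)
The plan is to exploit the block-diagonal structure of $Q$ together with the vanishing of the diagonal blocks of $R$, using cut-downs by the projections $\chi_{X_i}$.

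First I would observe that, under the identification of $\prod_{i\in I}B(\ell_2(X_i))$ with a subalgebra of $B(\ell_2(X))$, the operator $Q$ satisfies $\chi_{X_j}Q\chi_{X_i}=0$ whenever $i\neq j$, and its $i$-th diagonal block $Q_i\coloneqq \chi_{X_i}Q\chi_{X_i}$ is precisely the $i$-th coordinate of $Q$ in the product. Since $\prod_{i\in I}B(\ell_2(X_i))$ is the $\ell_\infty$-direct sum of the $B(\ell_2(X_i))$, we have
\[
\|Q\|=\sup_{i\in I}\|Q_i\|.
\]

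Next I would use the hypothesis $\chi_{X_i}R\chi_{X_i}=0$ to compute, for each $i\in I$,
\[
\chi_{X_i}(Q+R)\chi_{X_i}=\chi_{X_i}Q\chi_{X_i}+\chi_{X_i}R\chi_{X_i}=Q_i.
\]
Because $\chi_{X_i}$ is a projection (hence of norm at most $1$), it follows that
\[
\|Q_i\|=\|\chi_{X_i}(Q+R)\chi_{X_i}\|\leq \|\chi_{X_i}\|\cdot\|Q+R\|\cdot\|\chi_{X_i}\|\leq \|Q+R\|.
\]

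Finally, taking the supremum over $i\in I$ and combining with the first displayed identity yields $\|Q\|=\sup_{i\in I}\|Q_i\|\leq \|Q+R\|$, which is the desired inequality. There is no real obstacle here; the only subtlety is the standard identification that lets one treat $Q$ simultaneously as an element of the $\ell_\infty$-product and as a bounded operator on $\ell_2(X)$, so that compression by $\chi_{X_i}$ recovers its $i$-th coordinate.
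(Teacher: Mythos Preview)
Your proof is correct and follows essentially the same approach as the paper: both arguments use that $\|Q\|=\sup_{i\in I}\|\chi_{X_i}Q\chi_{X_i}\|$, that $\chi_{X_i}(Q+R)\chi_{X_i}=\chi_{X_i}Q\chi_{X_i}$ by hypothesis, and that compression by the projections $\chi_{X_i}$ does not increase the norm.
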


\begin{proof}
For each $i\in I$, identify $\ell_2(X_i)$ with a subspace of $\ell_2(X)$ in the natural way. Then 
($B_{\ell_2(X_i)}$ denotes the unit closed ball of $\ell_2(X_i)$, for  $i\in I$)
\[
\|Q\|=\sup_{i\in I}\sup_{x\in B_{\ell_2(X_i)}}\|\chi_{X_i}Q\chi_{X_i}(x)\|= \sup_{i\in I}\sup_{x\in B_{\ell_2(X_i)}}\|\chi_{X_i}(Q+R)\chi_{X_i}(x)\|
\]
and we conclude that  $\|Q\|\leq \|Q+R\|$.
\end{proof}

\begin{lemma}\label{LemmaRestrictionToDiag}
Let $(X,\cE)$ be a uniformly locally finite coarse space and $X'\subset X$. Fix a partition $X'=\bigsqcup_{i\in I}X_i$ and consider the coarse structure
\[
\cE'=\left\{\bigsqcup_{j\in J}(X_j\times X_j)\cap E\mid E\in \cE\right\}
\]
on $X'$. Then 
$ \cstu(X)\cap \prod_{i\in I} B(\ell_2(X_i))= \cstu(X',\cE')$. 
\end{lemma}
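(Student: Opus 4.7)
The plan is to prove both inclusions separately, using Lemma~\ref{LemmaRestrictionToDiag1} as the main tool for the harder direction.

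For the inclusion $\cstu(X',\cE')\subseteq \cstu(X)\cap \prod_{i\in I} B(\ell_2(X_i))$, I would start with an element $T\in\cstu[X',\cE']$. By definition, $\supp(T)\subseteq E'=\bigsqcup_{i\in I}(X_i\times X_i)\cap E$ for some $E\in\cE$. Extending $T$ by zero on $\ell_2(X\setminus X')$ gives a bounded operator on $\ell_2(X)$ (using uniform local finiteness and the bound on matrix entries recalled after the definition of $\cstu[X]$) whose support lies in $E\in\cE$, so $T\in\cstu[X]$; moreover $T$ is block-diagonal and vanishes on $\ell_2(X\setminus X')$, placing it in $\prod_{i\in I}B(\ell_2(X_i))$. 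Since both $\cstu(X)$ and $\prod_{i\in I}B(\ell_2(X_i))$ are norm-closed, taking norm-limits of such $T$ yields the desired inclusion.

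The harder direction is $\cstu(X)\cap \prod_{i\in I}B(\ell_2(X_i))\subseteq \cstu(X',\cE')$. Fix $T$ in the intersection and pick a sequence $S_n\in \cstu[X]$ with $S_n\to T$ in norm and $\supp(S_n)\subseteq E_n\in\cE$. Define the block-diagonal truncation $S_n^\Delta:=\sum_{i\in I}\chi_{X_i}S_n\chi_{X_i}$, which converges in the strong operator topology (uniformly in $n$) since each $S_n$ has uniformly locally finite propagation. Observe that $\supp(S_n^\Delta)\subseteq \bigsqcup_{i\in I}(X_i\times X_i)\cap E_n$, which lies in $\cE'$, so $S_n^\Delta\in\cstu[X',\cE']$. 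The key estimate is to show $S_n^\Delta\to T$ in norm, for which I would write $T-S_n=(T-S_n^\Delta)+(S_n^\Delta-S_n)$, note that $T-S_n^\Delta\in\prod_{i\in I}B(\ell_2(X_i))$ (since both $T$ and $S_n^\Delta$ are), and verify that the ``off-diagonal'' remainder $R_n:=S_n-S_n^\Delta$ satisfies $\chi_{X_i}R_n\chi_{X_i}=0$ for every $i\in I$. Lemma~\ref{LemmaRestrictionToDiag1} then gives $\|T-S_n^\Delta\|\leq \|T-S_n\|\to 0$, so $T\in\cstu(X',\cE')$.

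The main potential obstacle is purely bookkeeping: making sure that when $T$ is viewed as an operator on $\ell_2(X)$, the block-diagonal truncation really does produce an element whose support sits inside the prescribed generator of $\cE'$, and that extending operators by zero on $\ell_2(X\setminus X')$ respects the uniform Roe algebra structure. Both are immediate from the definitions, so the proof is essentially a direct application of Lemma~\ref{LemmaRestrictionToDiag1} after setting up the truncation correctly.
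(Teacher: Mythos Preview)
Your proof is correct and follows essentially the same approach as the paper: for the nontrivial inclusion you approximate $T$ by $S_n\in\cstu[X]$, split $S_n$ into its block-diagonal part $S_n^\Delta$ and off-diagonal part $R_n$, and apply Lemma~\ref{LemmaRestrictionToDiag1} to conclude $\|T-S_n^\Delta\|\leq\|T-S_n\|$. The paper does exactly this, only it phrases the decomposition more abstractly (writing $Q_n$ and $R_n$ without saying explicitly that $Q_n$ is the block-diagonal truncation of the approximant), whereas you spell out the construction.
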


\begin{proof} The inclusion $ \cstu(X)\cap \prod_{i\in I} B(\ell_2(X_i))\supseteq \cstu(X',\cE')$ is clear. 
For the other inclusion, 
let~$Q$ be an operator in   
 $ \cstu(X)\cap \prod_{i\in I} B(\ell_2(X_i))$. To prove    $Q\in \cstu(X',\cE')$,  
  pick sequences $(Q_n)_{n\in\N}$ and $(R_n)_{n\in\N}$ in $\cstu[X]$ such that  
\begin{enumerate}[(i)]
\item $Q_n\in \cstu[X',\cE']$, for all $n\in\N$,
\item $\chi_{X_i}R_n\chi_{X_i}=0$, for all $i\in I$ and all $n\in\N$, and 
\item $\lim_n(Q_n+R_n)=Q$.
\end{enumerate}
By Lemma \ref{LemmaRestrictionToDiag1}, we have that 
\[\|Q-Q_n\|\leq \|Q-(Q_n+R_n)\|.\]
So, $\lim_nQ_n=Q$ and we conclude that $Q\in \cstu(X',\cE')$. 
\end{proof}

A version of Lemma~\ref{L.Approx} holds for coarse spaces admitting  small partitions.

\begin{lemma} \label{T.ApproximationPARTITIONS} 
Suppose that  $(X,\cE)$ is a 
uniformly locally finite coarse space admitting a small partition. Suppose that $(T_j)_{j\in J}$ is a family of finite rank operators in $\cstu(X)$ such that 
$ \sum_{j\in J}\lambda_jT_j$ converges strongly to an operator  $T_{\bar\lambda}\in\cstu(X)$, for every $\bar\lambda\in\D^J$. Then, for every $\eps>0$, there exists $E\in\cE$ such that
\[\|\chi_{\{x\}}T_j\chi_{\{x'\}}\|< \eps,\]
for all $j\in J$ and all $x,x'\in X$ with $(x,x')\not\in E$.  
\end{lemma}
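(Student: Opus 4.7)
The plan is to decompose each $T_j$ into a block-diagonal and an off-block-diagonal piece with respect to the given small partition $X=\bigsqcup_{i\in I}X_i$, and control these by two completely different methods. The key structural input is that every $E\in\cE$ has the form $E_\Delta\cup F$ with $E_\Delta\in\cE_\Delta$ and $F\subset X\times X$ finite: the collection of such sets is closed under the coarse-structure operations, using uniform local finiteness to ensure that $E_\Delta\circ F$ and $F\circ E_\Delta$ are finite. Consequently
\[
\cstu(X)=\cstu(X,\cE_\Delta)+K(\ell_2(X)),
\]
and the block-diagonal conditional expectation $\cD(T):=\sum_{i\in I}\chi_{X_i}T\chi_{X_i}$—contractive, and SOT-continuous on norm-bounded sets by dominated convergence—maps $\cstu(X)$ into $\cstu(X,\cE_\Delta)$.

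For the block-diagonal piece, I would apply $\cD$ to partial sums to deduce that $\sum_j \lambda_j \cD(T_j)$ converges strongly to $\cD(T_{\bar\lambda})\in\cstu(X,\cE_\Delta)$ for every $\bar\lambda\in\D^J$. Since $(X,\cE_\Delta)$ is small, Lemma~\ref{L.Approx} would then produce $E_\Delta\in\cE_\Delta$ such that $\cD(T_{\bar\lambda})$ is $(\eps/2)$-$E_\Delta$-approximated for every $\bar\lambda$, and specializing to $\bar\lambda = e_j$ (the $j$-th coordinate vector) would give $|\langle T_j\delta_{x'},\delta_x\rangle|\leq \eps/2$ for every $j\in J$ and every block-diagonal $(x,x')\notin E_\Delta$.

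The main obstacle is to show that the off-block-diagonal bad set
\[
A:=\bigl\{(x,x')\notin \textstyle\bigsqcup_i X_i\times X_i:\ \exists\, j\in J,\ |\langle T_j\delta_{x'},\delta_x\rangle|\geq \eps\bigr\}
\]
is finite. Assuming it is not, I would extract pairwise distinct $(x_n,x'_n)\in A$ with witnesses $j_n\in J$; the case in which $\{j_n\}$ takes only finitely many values is ruled out because a single finite-rank $T_{j^\ast}$ is Hilbert–Schmidt. Otherwise one may pass to a subsequence with $j_n$ pairwise distinct and, by symmetry, with $x'_n\to\infty$ in $X$. Writing $T_{\bar\lambda}=A'+C$ with $A'\in\cstu(X,\cE_\Delta)$ and $C\in K(\ell_2(X))$ yields $T_{\bar\lambda}-\cD(T_{\bar\lambda})=C-\cD(C)$, and $\cD(C)$ is compact because $\|C\chi_{X_i}\|\to 0$ as $i$ exits every finite subset of $I$ (otherwise one extracts unit vectors $\eta_n\in\ell_2(X_{i_n})$ with $\|C\eta_n\|\geq \eps_0$, which are orthogonal, hence weakly null, contradicting compactness of $C$). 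Since $(x_n,x'_n)$ is off-block-diagonal, $\langle T_{\bar\lambda}\delta_{x'_n},\delta_{x_n}\rangle = \langle (T_{\bar\lambda}-\cD(T_{\bar\lambda}))\delta_{x'_n},\delta_{x_n}\rangle\to 0$ for each fixed $\bar\lambda$ by compactness and $x'_n\to\infty$.

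Setting $a_{j,n}:=\langle T_j\delta_{x'_n},\delta_{x_n}\rangle$, the uniform boundedness principle applied twice yields $M:=\sup_{\bar\lambda\in\D^J}\|T_{\bar\lambda}\|<\infty$, and the $\ell_\infty(J)$--$\ell_1(J)$ duality then gives $\sum_j|a_{j,n}|\leq M$, placing each $(a_{j,n})_j$ in the $M$-ball of $\ell_1(J)$. The previous paragraph is precisely the statement that $(a_{\cdot,n})_n$ converges to $0$ weakly in $\ell_1(J)$ (pairing against $\ell_\infty(J)$); Schur's theorem then gives $\|(a_{\cdot,n})\|_{\ell_1}\to 0$, so $|a_{j_n,n}|\to 0$, contradicting $|a_{j_n,n}|\geq \eps$. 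Hence $A$ is finite, and $E:=E_\Delta\cup A\in\cE$ has the required property.
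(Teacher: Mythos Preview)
Your proof is correct, and the overall architecture coincides with the paper's: both split each $T_j$ via the block-diagonal conditional expectation $\cD$ associated with the small partition, handle the diagonal part by invoking Lemma~\ref{L.Approx} on the small space $(X,\cE_\Delta)$ and specializing to $\bar\lambda=e_j$, and then argue separately that the off-block-diagonal contributions are controlled by some $E\in\cE$.

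The genuine difference is in the off-diagonal step. The paper proves the analogue of your claim ``$A$ is finite'' by a hands-on gliding-hump construction: it extracts a sequence $(j^n)$ with distinct indices and off-diagonal points $(x^n_1,x^n_2)$, then recursively chooses signs $\lambda_n\in\{-1,1\}$ (thinning the sequence as it goes) so that the matrix coefficient of $\sum_n\lambda_n\sfR T_{j^n}$ at $(x^n_1,x^n_2)$ stays bounded below; this is then confronted directly with an approximation $S_1+S_2$ where $\supp(S_1)\in\cE_\Delta$ and $\supp(S_2)$ is finite. Your route is more structural: you first observe that the generating hypothesis forces every $E\in\cE$ to decompose as $E_\Delta\cup F$ with $F$ finite, whence $\cstu(X)=\cstu(X,\cE_\Delta)+K(\ell_2(X))$ and $T_{\bar\lambda}-\cD(T_{\bar\lambda})$ is compact for every $\bar\lambda$; then, after a double application of the uniform boundedness principle to place the columns $(a_{j,n})_j$ in $\ell_1(J)$, compactness gives weak-null in $\ell_1(J)$ and Schur's theorem finishes. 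Your argument is cleaner and isolates the conceptual reason the off-diagonal part is tame (it is compact), at the cost of importing Schur's property; the paper's argument is entirely self-contained and avoids any appeal to classical Banach space theory. Both are short, and neither dominates the other.
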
 
  
\begin{proof}  
Let $X=\bigsqcup_{i\in I}X_i$ be a small partition of $(X,\cE)$ and let $\cE_\Delta$ be the coarse structure  in Definition \ref{DefinitionUniformlyMetrizable} related to this given partition of $X$. Let $P_j$ denote the projection of $\ell_2(X)$ onto $\ell_2(X_j)$, 
let $\sfP:B(\ell_2(X))\to \prod_{j\in J}B(\ell_2(X_j))$ be the conditional expectation, 
\[
\sfP(T)=\sum_{j\in J} P_j T P_j, 
\]
 and set $\sfR=1-\sfP$. Lemma \ref{LemmaRestrictionToDiag1} implies that $\sfP(\cstu(X))\subset \cstu(X)$.  Since $\sum_{j\in J}\lambda_jT_j$ converges strongly in $\cstu(X)$ for all $\bar\lambda\in \D^J$, so do  $\sum_{j\in J}\lambda_j\sfP T_j$ and $\sum_{j\in J}\lambda_j\sfR T_j$. Clearly, $\sfP T_{\bar{\lambda}}=\sum_{j\in J}\lambda_j\sfP T_j$ and $\sfR T_{\bar{\lambda}}=\sum_{j\in J}\lambda_j\sfR T_j$, for all $\bar\lambda\in\D^J$.

\begin{claim}\label{ClaimKappaGenPar}
For all $\eps>0$, there exists $E\in \cE$ such that $\|\chi_{\{x\}}\sfR T_j\chi_{\{x'\}}\|<\eps$, for all $j\in J$ and all $x,x'\in X$ with  $(x,x')\not\in E$.
\end{claim}

\begin{proof}
Suppose not. Then there exists $\eps>0$, $(j^E)_{E\in \cE}$ in $J$ and $(x_1^E,x_2^E)_{E\in \cE}$ in $X\times X$ with $(x_1^E,x^E_2)\not\in E$ and $\|\chi_{\{x_1^E\}}\sfR T_{j^E}\chi_{\{x_2^F\}}\|\geq \eps$, for all $E\in \cE$. In particular, $\{(x_1^E,x_2^E)\}\in\cE$ and  $(x_1^E,x_2^E)\not\in \bigsqcup_{i\in I}X_i\times X_i$, for all $E\in \cE$. An easy induction produces a  sequence $(j^n)_{n\in\N}$ in $J$ and a sequence $(x^n_1,x^n_2)_{n\in\N}$ in $X\times X$  of distinct elements such that $\|\chi_{\{x_1^n\}}\sfR T_{j^n}\chi_{\{x_2^n\}}\|\geq \eps$ and $(x_1^n,x_2^n)\not\in \bigsqcup_{i\in I}X_i\times X_i$, for all $n\in\N$. Since $T_j$ has finite rank for all $j\in J$, without loss of generality, assume that $(j^n)_{n\in\N}$ is a sequence of distinct elements of $J$. 

Going to a subsequence of $(j^n)_{n\in\N}$ if necessary, we can pick a sequence $(\lambda_n)_{n\in\N}$ in $\{-1,1\}$ such that 
\begin{enumerate}[(i)]
\item $\Big\|\chi_{\{x^n_1\}}\Big(\sum_{i=1}^n \lambda_i\sfR T_{j^i}\Big)\chi_{\{x^n_2\}}\Big\|\geq \eps$, for all $n\in\N$, and
\item $\sum_{i>n}\|\chi_{\{x^n_1\}} \sfR T_{j^i}\chi_{\{x^n_2\}}\|<\eps/2$, for all $n\in\N$.
\end{enumerate}
Therefore, 
\[\Big\|\chi_{\{x^n_1\}}\Big(\sum_{i=1}^\infty \lambda_i \sfR T_{j^i}\Big)\chi_{\{x^n_2\}}\Big\|\geq \frac{\eps}{2},\]
for all $n\in\N$. Since $\cE$ is generated by $\cE_\Delta\cup \{E\in\cE\mid |E|<\infty\}$, there exists $S_1,S_2\in \cstu(X)$ with $\supp(S_1)\in \cE_\Delta$ and $\supp(S_2)$ finite such that 
\[\Big\|\sum_{i=1}^\infty \lambda_i\sfR T_{j^i}-(S_1+S_2)\Big\|< \frac{\eps}{2}.\]
Since $\supp(S_2)$ is finite, fix $n\in \N$ such that $(x^n_1,x^n_2)\not\in \supp(S_2)$. Then
\[\Big\|\chi_{\{x^n_1\}}\Big(\sum_{i=1}^\infty \lambda_i \sfR T_{j^i}-(S_1+S_2)\Big)\chi_{\{x^n_2\}}\Big\|=\Big\|\chi_{\{x^n_1\}}\Big(\sum_{i=1}^\infty \lambda_i \sfR T_{j^i}\Big)\chi_{\{x^n_2\}}\Big\|\geq \frac{\eps}{2};\]
contradiction.
\end{proof}

Fix $\eps>0$ and let $E\in\cE$ be given by Claim \ref{ClaimKappaGenPar} for $\eps/2$. By Lemma \ref{LemmaRestrictionToDiag}, 
$\sfP(\cstu(X))\subset \cstu(X,\cE_\Delta)$. Hence, by Lemma \ref{L.Approx}, going to a larger $E\in \cE$ if necessary, assume that $\sfP T_{j}$ is $\eps/2$-$E$-approximable, for all $j\in J$. This implies  that 
\[\|\chi_{\{x\}}T_j\chi_{\{x'\}}\|\leq \|\chi_{\{x\}}\sfP T_j\chi_{\{x'\}}\|+\|\chi_{\{x\}}\sfR T_j\chi_{\{x'\}}\|\leq \eps,\]
for all $j\in J$ and all $x,x'\in X$ with $(x,x')\not\in E$.
\end{proof}

\begin{thm}\label{ThmPartitionUniformlyMetrizable}
 Suppose  $(X,\cE)$ and $(Y,\cF)$ are  uniformly  locally finite coarse spaces  admitting small partitions.   If $\cstu(X)$ and $\cstu(Y)$ are rigidly    isomorphic, then $(X,\cE)$ and $(Y,\cF)$ are coarsely equivalent.
\end{thm}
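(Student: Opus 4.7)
The plan is to imitate the proof of Theorem~\ref{T.ThmRigIsoImpliesCoarseEqui}, with Lemma~\ref{T.ApproximationPARTITIONS} in place of Lemma~\ref{L.Approx}. The only place the smallness of $(X,\cE)$ and $(Y,\cF)$ was used in that proof was through Lemma~\ref{LemmaCoarseSpakulaWillett}, so the main task is to adapt that lemma to the partition setting.

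First, I would establish the following analogue of Lemma~\ref{LemmaCoarseSpakulaWillett}: if $(X,\cE)$ and $(Y,\cF)$ are uniformly locally finite coarse spaces admitting small partitions and $U\colon\ell_2(X)\to\ell_2(Y)$ spatially implements an isomorphism $\Phi\colon\cstu(X)\to\cstu(Y)$, then $F_{E,\delta}\in\cF$ for all $E\in\cE$ and $\delta>0$. The proof would follow the original verbatim: assuming $F_{E,\delta}\notin\cF$, pick witnesses $(x_1^F,x_2^F)\in E$ and $(y_1^F,y_2^F)\notin F$ with $|\langle U\delta_{x_i^F},\delta_{y_i^F}\rangle|\ge\delta$ for every $F\in\cF$, and apply Lemma~\ref{LemmaNetsCoarseStructure} to thin $\cF$ down to a cofinal $J$ with pairwise distinct first and second $x$-coordinates. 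By Lemma~\ref{LemmaSOTConv} the sum $\sum_{F\in J}\lambda_F e_{x_1^Fx_2^F}$ converges strongly in $\cstu(X)$ for every $\bar\lambda\in\D^J$, and SOT-continuity of $\Phi$ (which follows from spatial implementation, Lemma~\ref{LemmaSpakulaWillett}) passes this property to the rank-one family $(\Phi(e_{x_1^Fx_2^F}))_{F\in J}$ in $\cstu(Y)$. Since $(Y,\cF)$ admits a small partition, Lemma~\ref{T.ApproximationPARTITIONS} supplies $F_1\in\cF$ with
\[
\|\chi_{\{y\}}\Phi(e_{x_1^Fx_2^F})\chi_{\{y'\}}\|<\delta^2
\]
for every $F\in J$ and every $(y,y')\notin F_1$. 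Choosing $F_1$ itself in the cofinal set (so that $(y_1^{F_1},y_2^{F_1})\notin F_1$), repeating the inner-product computation at the end of the proof of Lemma~\ref{LemmaCoarseSpakulaWillett} yields $\delta^2\le\|\chi_{\{y_1^{F_1}\}}\Phi(e_{x_1^{F_1}x_2^{F_1}})\chi_{\{y_2^{F_1}\}}\|<\delta^2$, the desired contradiction.

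With this adapted lemma in hand, I would conclude exactly as in the proof of Theorem~\ref{T.ThmRigIsoImpliesCoarseEqui}: rigidity gives $\delta>0$ and maps $f\colon X\to Y$, $g\colon Y\to X$ with $|\langle U\delta_x,\delta_{f(x)}\rangle|\ge\delta$ and $|\langle U^*\delta_y,\delta_{g(y)}\rangle|\ge\delta$. The adapted lemma applied to $\Phi$ (using that $(Y,\cF)$ has a small partition) shows $f$ is coarse; the symmetric version applied to $\Phi^{-1}$ (using that $(X,\cE)$ has a small partition) shows $g$ is coarse. Taking $E=\Delta_X$ in the adapted lemma then yields $(y,f(g(y)))\in F_{\Delta_X,\delta}\in\cF$ for every $y\in Y$, so $f\circ g$ is close to $\Id_Y$; the symmetric step shows $g\circ f$ is close to $\Id_X$, and $f$ witnesses coarse equivalence.

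The main technical point to verify is that the pointwise matrix-entry bound delivered by Lemma~\ref{T.ApproximationPARTITIONS} suffices to replace the stronger $\e$-$E$-approximation statement used in the proof of Lemma~\ref{LemmaCoarseSpakulaWillett}. The original argument ultimately only tested $\chi_A\Phi(e(F))\chi_B$ on singletons $A=\{a(y)\}$, $B=\{b(y)\}$, so precisely the entrywise bound $\|\chi_{\{y\}}T_j\chi_{\{y'\}}\|<\eps$ for $(y,y')\notin E$ is what is needed; the possible failure of full approximation in the presence of off-diagonal error (already warned against by Example~\ref{Ex.Approximate}) is irrelevant for this application.
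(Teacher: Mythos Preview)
Your proposal is correct and follows essentially the same route as the paper's proof, which is itself just a one-line sketch (``using Lemma~\ref{T.ApproximationPARTITIONS}, one can easily check that Lemma~\ref{LemmaCoarseSpakulaWillett} applies''). You have correctly identified and spelled out the key technical point the paper leaves implicit: the proof of Lemma~\ref{LemmaCoarseSpakulaWillett} ultimately only tests $\chi_{\{y\}}\Phi(e(F))\chi_{\{y'\}}$ on singletons, so the entrywise bound delivered by Lemma~\ref{T.ApproximationPARTITIONS} is exactly what is needed, even though full $\e$-$E$-approximation may fail in the partition setting.
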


\begin{proof}
Fix $\delta>0$, $f:X\to Y$ and $g:Y\to X$ such that \[|\langle U\delta_x,\delta_{f(x)}\rangle|\geq \delta\ \ \text{and}\ \  |\langle U^*\delta_y,\delta_{g(y)}\rangle|\geq \delta,\]
for all $x\in X$ and all $y\in Y$.  Using Lemma~\ref{T.ApproximationPARTITIONS},  one can easily check that Lemma~\ref{LemmaCoarseSpakulaWillett} applies to   $(X,\cE)$ and $(Y,\cF)$. So both $f$ and $g$ are coarse maps. Proceeding   analogously to the proof of Theorem \ref{T.ThmRigIsoImpliesCoarseEqui}, we obtain that $g\circ f$ and $f\circ g$ are close to $\Id_{X}$ and $\Id_{Y}$, respectively. 
\end{proof}

\begin{example}\label{ExUniformlyMetrizableSpace}
We now give a natural way to construct examples of uniformly locally finite coarse spaces with property A which are non-metrizable but have a small partition. In particular, those spaces satisfy the conditions in Theorem  \ref{ThmPartitionUniformlyMetrizable}.

Let  $(X,\mathcal{E})$ be a metrizable coarse space, $J$ be an index set,  $X_j=X$, and $\cE_j=\cE$, for all $j\in J$. Let $\mathfrak{X}=\bigsqcup_{j\in J}X_j$, and for each $j\in J$ identify $X_j$ with a subset of $\fX$ and $E\in\mathcal{E}_j$ with a subset of $\fX\times \fX$ in the natural way. Let $(E_n)_{n\in\N}$ be a sequence generating $\cE$ and for each $j\in J$ let  $(E_{n,j})_{n\in\N}$ be $(E_n)_{n\in\N}$ thought as a sequence of subsets of $X_j\times X_j$. 
Define 

\begin{enumerate}[(i)]
\item  $\fE_1=\{\bigsqcup_{j\in J}E_{n,j}\mid n\in\N\}$, and
\item  $\fE_2=\{E\subset \fX\times \fX\mid |E|<\infty\}$.
\end{enumerate}
Let $\fE$ be the coarse structure on $\fX$ generated by $\fE_1$ and $\fE_2$. The space $(\fX,\fE)$ is uniformly locally finite and, if $J$ is uncountable, it is non-metrizable. It is clear that $(X,\cE)$ has a small partition. Also, one can easily check that the asymptotic dimension of $(X,\cE)$, i.e., $\text{asdim}(X)$, is the same as the asymptotic dimension of $(\fX,\fE)$ (see \cite{RoeBook}, Chapter~9, for definition of asymptotic dimension). Hence, if $\text{asdim}(X)<\infty$, $(\fX,\fE)$ has property A (by \cite{RoeBook}, Section~11.5). 
\end{example}

\section{Isomorphism between uniform Roe algebras and Cartan masas} \label{SectionGhostlyMasas}

The goal of this section is to prove  
\eqref{ThmIsoImpliesRigIsoGhost} from the introduction  (Theorem~\ref{T.ThmIsoImpliesRigIsoGhost} and Theorem~\ref{C.ThmIsoImpliesRigIsoGhost}). 
Only a definition stands between us and this proof. 
A Cartan masa $A$ in $\cstu(X)$ is \emph{ghostly} if
it contains non-compact ghost projections $Q_j$, for $j\in J$, 
which are orthogonal and $\sum_{j\in J} Q_j$ converges strongly to the identity of $\cstu(X)$. 

The following is a strengthening of Lemma 4.6 of \cite{SpakulaWillett2013}, 
where an analogous result was proved using the stronger property A. 

\begin{theorem} \label{T.ThmIsoImpliesRigIsoGhost}
Suppose that  $(X,\cE)$ and $(Y,\cF)$  are metrizable 
uniformly locally finite coarse spaces 
and there exists an isomorphism $\Phi\colon\cstu(X)\to \cstu(Y)$ which is 
not rigidly implemented. 
Then at least one of the following applies. 
\begin{enumerate}[(i)]
\item $\Phi[\ell_\infty(X)]$ is a  ghostly Cartan masa in $\cstu(Y)$. 
\item $\Phi^{-1}[\ell_\infty(Y)]$ is a  ghostly Cartan masa in $\cstu(X)$. 
\end{enumerate}
\end{theorem}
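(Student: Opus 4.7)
By Lemma~\ref{LemmaSpakulaWillett}, $\Phi=\Ad U$ for some unitary $U\colon\ell_2(X)\to\ell_2(Y)$; set $\eta_x:=U\delta_x$ and $M(x):=\sup_{y\in Y}|\eta_x(y)|$. The failure of rigidity means that either $\inf_{x\in X}M(x)=0$ or $\inf_{y\in Y}\sup_{x\in X}|\langle U^*\delta_y,\delta_x\rangle|=0$; by replacing $\Phi$ with $\Phi^{-1}$ if necessary, I may assume the first holds and aim to prove conclusion (i). The image $A:=\Phi[\ell_\infty(X)]$ is automatically a Cartan masa in $\cstu(Y)$ (the Cartan conditions of Definition~\ref{DefiCartanMasa} are preserved under $*$-isomorphism), and by Corollary~\ref{LemmaRank} every projection $\Phi(e_B)\in A$ is non-compact precisely when $B$ is infinite. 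So it suffices to construct a partition $X=\bigsqcup_{j\in J}A_j$ into infinite subsets such that each $\Phi(e_{A_j})$ is a ghost; the resulting family $(Q_j):=(\Phi(e_{A_j}))$ is then orthogonal, non-compact, and sums strongly to $\Phi(1)=1$, witnessing ghostliness.

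The quantitative core is the identity
\[
\langle\Phi(e_B)\delta_y,\delta_{y'}\rangle=\sum_{x\in B}\overline{\eta_x(y)}\eta_x(y'),
\]
which by Cauchy--Schwarz yields $|\langle\Phi(e_B)\delta_y,\delta_{y'}\rangle|^2\leq g_B(y)g_B(y')$ with $g_B(y):=\sum_{x\in B}|\eta_x(y)|^2$. Hence $\Phi(e_B)$ is a ghost whenever $g_B(y)\to 0$ as $y$ leaves every finite subset of $Y$. Using $\inf_x M(x)=0$, the set $\{x\in X:M(x)<1/n^2\}$ is infinite for each $n$, so I inductively choose distinct $x_n\in X$ with $M(x_n)<1/n^2$. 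For any partition $\mathbb{N}=\bigsqcup_{j}N_j$ of $\mathbb{N}$ into infinite pieces, define $A_j^\circ:=\{x_n:n\in N_j\}$. Each $\Phi(e_{A_j^\circ})$ is then a ghost by a standard two-piece estimate: given $\eps>0$, choose $N$ with $\sum_{n\geq N}1/n^4<\eps/2$; since each unit vector $\eta_{x_n}\in\ell_2(Y)$ satisfies $|\{y:|\eta_{x_n}(y)|^2\geq\eps/(2N)\}|\leq 2N/\eps$, the finite union $F_j$ of these exceptional sets over the finitely many $n\in N_j$ with $n<N$ is finite, and outside $F_j$ one has $g_{A_j^\circ}(y)<\eps/2+\eps/2=\eps$.

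Finally, I must absorb the elements of $X\setminus\{x_n\}_n$ into the $A_j^\circ$. Adding any finite subset $B_j$ to $A_j^\circ$ perturbs $\Phi(e_{A_j^\circ})$ only by the finite-rank operator $\Phi(e_{B_j})$, which is compact and hence a ghost, so the total $\Phi(e_{A_j^\circ\cup B_j})$ remains a ghost. When $X$ is countable---the connected case and, more generally, the case of countably many components, since a metrizable uniformly locally finite coarse space has countable connected components---this is a routine enumeration: list $X\setminus\{x_n\}$ and drop successive elements into successive parts so that each $A_j$ is infinite with only finitely many extras. The main obstacle I anticipate is the fully general case, in which $|X|$ may exceed $\aleph_0$ via uncountably many components; there one must enlarge $|J|$ to match $|X|$ and arrange a combinatorial transversal that preserves the infinitude of every $A_j$ while pushing only finitely many ``bad'' elements into each. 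The quantitative ghost estimate of the second paragraph is local to each piece and survives this bookkeeping, so the argument concludes once the partition is successfully constructed.
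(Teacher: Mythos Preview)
Your argument is correct and follows the same core idea as the paper's proof: split $\Phi(e_B)$ into a finite-rank head and a tail with uniformly small matrix coefficients, via an enumeration of $B$ with $M(x_n)\to 0$. Your Cauchy--Schwarz reduction to $g_B(y)\to 0$ is a clean packaging of what the paper does by direct estimation of $|\langle P_m\delta_y,\delta_{y'}\rangle|$. The one structural difference is that the paper partitions all of $X$ in a single step (using the level sets $X_n=\{x:M(x)\geq 2^{-n}\}$, it finds infinite pieces $Z_m$ each meeting every $X_{n+1}\setminus X_n$ at most once), thereby avoiding your absorption step altogether.

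The obstacle you anticipate in your final paragraph does not arise: a metrizable coarse space, by the paper's definition, has $\cE=\cE_d$ for an honest real-valued metric $d$, and such a space is automatically \emph{connected} (every $\{(x,y)\}$ lies in $E_{d(x,y)}\in\cE_d$). Together with uniform local finiteness this forces $X$ to be countable --- fix any $x_0\in X$ and a cofinal increasing sequence $(E_n)$ in $\cE$; then $X=\bigcup_n (E_n)_{x_0}$ with each term finite. The paper invokes this explicitly (``Since $X$ is countable\ldots''), and with it your routine enumeration in the countable case already completes the proof.
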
 

Dropping the assumption that the spaces be metric, we obtain a slightly weaker conclusion which 
is still incompatible with property A. 

\begin{theorem} \label{C.ThmIsoImpliesRigIsoGhost}
Suppose that  $(X,\cE)$ and $(Y,\cF)$  
are uniformly locally finite coarse spaces   
and  there exists an isomorphism $\Phi\colon\cstu(X)\to \cstu(Y)$ which is 
not rigidly implemented. 
Then at least one of the following applies. 
\begin{enumerate}[(i)]
\item There is a non-compact projection $P\in \ell_\infty(X)$ such that 
$\Phi[P]$ is a  ghost in~$\cstu(Y)$. 
\item There is a non-compact projection $P\in \ell_\infty(Y)$ such that 
$\Phi^{-1}[P]$ is a  ghost in~$\cstu(X)$. 
\end{enumerate}
\end{theorem}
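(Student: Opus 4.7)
The plan is to extract from the failure of rigidity a sequence of points in $X$ whose $U$-images spread out, and then use them to build a non-compact diagonal projection that $\Phi$ sends to a ghost.

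First, by Lemma~\ref{LemmaSpakulaWillett} I fix a unitary $U\colon\ell_2(X)\to\ell_2(Y)$ with $\Phi=\Ad U$. Failure of rigidity means that at least one of the infima
\[
\inf_{x\in X}\sup_{y\in Y}|\langle U\delta_x,\delta_y\rangle|\qquad\text{and}\qquad \inf_{y\in Y}\sup_{x\in X}|\langle U^*\delta_y,\delta_x\rangle|
\]
vanishes; these correspond to the two alternatives in Theorem~\ref{C.ThmIsoImpliesRigIsoGhost}, so by applying the identical argument to $\Phi^{-1}=\Ad U^*$ in the second case I may assume the first. Since each $U\delta_x$ is a unit vector, $\sup_y|\langle U\delta_x,\delta_y\rangle|>0$ for every $x$, so the vanishing infimum lets me recursively choose distinct $x_1,x_2,\dots\in X$ with $\eta_n\coloneqq \sup_{y\in Y}|\langle U\delta_{x_n},\delta_y\rangle|<2^{-n}$.

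Let $P\coloneqq \sum_{n\in\N} e_{x_n x_n}$; this is a non-compact projection in $\ell_\infty(X)\subseteq \cstu(X)$. Setting $u_n\coloneqq U\delta_{x_n}$, the image $\Phi(P)=UPU^*$ is, by strong-operator continuity of $\Ad U$ on bounded sets, the orthogonal projection $Q$ onto $\overline{\mathrm{span}}\{u_n:n\in\N\}$, given by $Q\xi=\sum_n\langle\xi,u_n\rangle u_n$. I will show that $Q$ is a ghost, thereby realising alternative~(i). Since $Q$ is a self-adjoint projection, Cauchy-Schwarz gives
\[
|\langle Q\delta_y,\delta_{y'}\rangle|=|\langle Q\delta_y,Q\delta_{y'}\rangle|\leq \|Q\delta_y\|\,\|Q\delta_{y'}\|=\langle Q\delta_y,\delta_y\rangle^{1/2}\langle Q\delta_{y'},\delta_{y'}\rangle^{1/2},
\]
so it suffices to prove that $\langle Q\delta_y,\delta_y\rangle=\sum_n|u_n(y)|^2$ tends to zero as $y\to\infty$ in $Y$, where $u_n(y)\coloneqq \langle u_n,\delta_y\rangle$.

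Fix $\e>0$ and split the sum at some $N$ to be chosen. For the tail, combine $|u_n(y)|\leq \eta_n$ with Cauchy-Schwarz to bootstrap
\[
\sum_{n>N}|u_n(y)|^2\leq \sum_{n>N}\eta_n|u_n(y)|\leq \Bigl(\sum_{n>N}\eta_n^2\Bigr)^{1/2}\Bigl(\sum_{n>N}|u_n(y)|^2\Bigr)^{1/2},
\]
whence $\sum_{n>N}|u_n(y)|^2\leq \sum_{n>N}\eta_n^2<\e/2$ once $N$ is large, using $(\eta_n^2)_n\in\ell_1$. For the head, each $u_n\in\ell_2(Y)\subseteq c_0(Y)$, so the set $A\coloneqq \bigcup_{n\leq N}\{y\in Y:|u_n(y)|^2\geq \e/(2N)\}$ is finite and on its complement the first $N$ terms sum to at most $\e/2$. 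Thus $\langle Q\delta_y,\delta_y\rangle<\e$ off $A$, and $Q$ is a ghost.

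The key step is the tail bootstrap, which genuinely requires $(\eta_n)_n\in\ell_2$; this is forced a priori by the geometric choice $\eta_n<2^{-n}$, which the hypothesis makes possible. Otherwise the argument is essentially formal and avoids the delicate Cartan-masa construction of Theorem~\ref{T.ThmIsoImpliesRigIsoGhost}, precisely because the weaker conclusion only demands \emph{one} non-compact projection whose image is a ghost, rather than orthogonal ghost projections summing strongly to the identity; for that reason no set-theoretic or metrizability hypothesis on $\cE$ or $\cF$ beyond uniform local finiteness is required.
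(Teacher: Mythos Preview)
Your proof is correct and follows essentially the same route as the paper: choose distinct $x_n$ with $\sup_y|\langle U\delta_{x_n},\delta_y\rangle|<2^{-n}$, set $P=\sum_n e_{x_nx_n}$, and verify that $\Phi(P)$ is a ghost. The paper estimates the matrix coefficients of $\Phi(P)$ directly (its Claim~2), while you reduce via Cauchy--Schwarz to the diagonal entries; this is a cosmetic difference. One small remark: the ``bootstrap'' you highlight is unnecessary, since $|u_n(y)|\leq\eta_n$ already gives $|u_n(y)|^2\leq\eta_n^2$ termwise, hence $\sum_{n>N}|u_n(y)|^2\leq\sum_{n>N}\eta_n^2$ immediately.
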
 

Before proceeding to prove these theorems, we point out that we do not know an example of a
uniform Roe algebra with a ghostly Cartan masa, or even an example of a uniform 
Roe algebra with a  Cartan masa that contains a noncompact ghost projection. 
The best that we could do is Example~\ref{Ex.Ghost}.

\begin{proof}[Proof of Theorem \ref{T.ThmIsoImpliesRigIsoGhost}]
 If $\cstu(X)$ and $\cstu(Y)$ are    isomorphic,  Lemma \ref{LemmaSpakulaWillett} gives us a unitary operator $U:\ell_2(X)\to \ell_2(Y)$ which spatially implements an isomorphism between $\cstu(X)$ and $\cstu(Y)$. Without loss of generality, we assume that $U$ does not induce a rigid    isomorphism. Hence, for every $n\geq 1$, the set
\[
X_n\coloneqq \Big\{x\in X\mid \max_{y\in Y} |\langle U\delta_{x},\delta_y\rangle|\geq  2^{-n}\Big\}.
\]
is a proper subset of $X$. 

\begin{claim} \label{C.Zm} There  is a partition $X=\bigcup_m Z_m$ such that each $Z_m$
is infinite but $Z_m\cap (X_{n+1}\setminus X_n)$
has at most one element for all $m$ and all $n$. 
\end{claim}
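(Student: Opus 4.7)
The plan is to view $X$ as filtered by the proper subsets $X_1\subseteq X_2\subseteq\cdots$ and produce the partition by a diagonal construction that selects at most one representative from each annulus $X_{n+1}\setminus X_n$ while spreading elements widely enough to make every $Z_m$ infinite. Before constructing anything I would record two preliminary observations. First, $\bigcup_n X_n = X$: since $U$ is unitary, $\|U\delta_x\|=1$ for each $x\in X$, so $\max_{y\in Y}|\langle U\delta_x,\delta_y\rangle|>0$ and hence $x\in X_n$ for $n$ sufficiently large. Second, the standing hypothesis $X_n\subsetneq X$ for every $n$ (a restatement of the failure of rigid implementability) forces infinitely many of the sets $X_1,\ X_2\setminus X_1,\ X_3\setminus X_2,\ldots$ to be nonempty; otherwise the filtration would stabilise at some $X_N=X$.

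Next I would record that $X$ is countable. Metrizability of $\cE$ says it is the bounded coarse structure of some (finite-valued) metric $d$, so $\{(x,y)\}\subseteq E_{d(x,y)}$ belongs to $\cE$ for every pair, i.e.\ $(X,\cE)$ is connected. Fixing a basepoint $x_0$ and a countable generating sequence $(E_k)_{k\in\N}$ of $\cE$ (available by metrizability), one has $X=\bigcup_k (E_k)_{x_0}$, a countable union of finite sets by uniform local finiteness.

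For the combinatorial heart of the argument, enumerate the nonempty sets among $X_1,\ X_2\setminus X_1,\ X_3\setminus X_2,\ldots$ as $B_1,B_2,\ldots$, fix a surjection $s\colon\N\to\N$ taking each value infinitely often (e.g.\ the well-known sequence $1,1,2,1,2,3,1,2,3,4,\ldots$), and single out a distinguished element $b_k\in B_k$ for every $k$. Define $f\colon X\to\N$ by $f(b_k):=s(k)$ and, for each $k$, by any injection $B_k\setminus\{b_k\}\to\N\setminus\{s(k)\}$ on the remaining elements of $B_k$. Setting $Z_m:=f^{-1}(m)$, the injectivity of $f|_{B_k}$ yields $|Z_m\cap B_k|\leq 1$ for every $m$ and $k$, and hence $|Z_m\cap(X_{n+1}\setminus X_n)|\leq 1$; on the other hand, every $Z_m$ contains $b_k$ for each of the infinitely many indices $k$ with $s(k)=m$, so every $Z_m$ is infinite.

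I do not foresee a genuine obstacle. The non-rigidity hypothesis is used solely to ensure that infinitely many annuli are available to supply the distinguished elements $b_k$, after which the construction of $(Z_m)$ is a standard diagonal argument. The metrizability hypothesis enters only through countability of $X$, which reassures us that a countably indexed partition is enough.
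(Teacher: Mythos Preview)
Your argument is correct and follows essentially the same approach as the paper: both proofs rest on the countability of $X$ and the observation that infinitely many of the annuli $X_{n+1}\setminus X_n$ are nonempty (equivalently, that each $X\setminus X_n$ is infinite), and then carry out an elementary diagonal distribution. The paper's version is a terser two-step sketch (first partition $X$ into pieces $Z_m'$ with $Z_m'\cap X_n$ finite for all $m,n$, then refine each $Z_m'$), whereas you give the construction in one explicit pass; the content is the same.
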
 

\begin{proof} 
Since $\bigcup_{n\in \N} X_n=X$, for every $n\in \N$ the set $X\setminus X_n$ is infinite for all $n$.  
Since $X$ is countable, we can find a partition $X=\bigcup_{m\in \N} Z_m'$ 
such that $Z_m'\cap X_n$ is finite for all $m$ and $n$. 
Every $Z_m'$ can now be partitioned into sets as required. 
\end{proof} 

Fix $m\in\N$.  Let  $(x_n)_{n\in\N}$ be an enumeration of   $Z_m$ as in Claim~\ref{C.Zm} 
such that 
$x_n\in X\setminus X_n$ for all $n$. Then  
 $\|Ue_{x_nx_n}U^*\delta_y\|<2^{-n}$, for all $n\in\N$ and all $y\in Y$. 
 So, $\sum_{n\in\N}e_{x_nx_n}$ converges in the strong operator topology to an operator in $\cstu(X)$. 
 
\begin{claim} \label{C.2} The projection 
$P_m=\sum_{n\in\N}Ue_{x_nx_n}U^*
$
is a ghost.
\end{claim}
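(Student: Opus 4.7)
The plan is to compute the matrix entries of $P_m$ with respect to the standard basis of $\ell_2(Y)$ and estimate them using the key inequality $|\langle U\delta_{x_n},\delta_y\rangle|<2^{-n}$ (a direct consequence of $x_n\notin X_n$), together with the fact that each $U\delta_{x_n}$ is a unit vector in $\ell_2(Y)$.

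First I would set $\alpha_n(y)\coloneqq \langle U\delta_{x_n},\delta_y\rangle$ and unfold $\langle Ue_{x_nx_n}U^*\delta_y,\delta_{y'}\rangle$. Using $e_{x_nx_n}\xi=\langle\xi,\delta_{x_n}\rangle\delta_{x_n}$, one checks that this equals $\overline{\alpha_n(y)}\,\alpha_n(y')$, hence
\[
\langle P_m\delta_y,\delta_{y'}\rangle=\sum_{n\in\N}\overline{\alpha_n(y)}\,\alpha_n(y').
\]
(The series converges absolutely because $|\alpha_n(y)|<2^{-n}$.)

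Given $\eps>0$, I would then split the sum at an appropriate threshold $N$. For the tail, the bound $|\alpha_n(y)|<2^{-n}$ together with $|\alpha_n(y')|\leq\|U\delta_{x_n}\|=1$ gives
\[
\Bigl|\sum_{n>N}\overline{\alpha_n(y)}\,\alpha_n(y')\Bigr|\leq\sum_{n>N}2^{-n},
\]
which we can make less than $\eps/2$ by choosing $N$ large, uniformly in $y,y'$. For the head, the crucial point is that for each fixed $n$ the vector $U\delta_{x_n}$ lies in $\ell_2(Y)$, so $\alpha_n(y)\to 0$ as $y\to\infty$ in $Y$; since only finitely many indices $n\leq N$ are involved, one can find a finite set $A_\eps\subset Y$ with $|\alpha_n(y)|<\eps/(2N)$ for every $n\leq N$ and every $y\notin A_\eps$. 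Combined with $|\alpha_n(y')|\leq 1$, this bounds the head by $\eps/2$ whenever $y,y'\notin A_\eps$.

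Adding the two estimates yields $|\langle P_m\delta_y,\delta_{y'}\rangle|<\eps$ for all $y,y'\notin A_\eps$, which is exactly the ghost condition from Definition~\ref{DefiGhost}. The only step requiring any thought is the right bookkeeping in the split: the geometric decay $|\alpha_n(y)|<2^{-n}$ controls the tail uniformly, while the Bessel-type $\ell_2$-decay of each $\alpha_n(\cdot)$ in the variable $y$ controls the (finitely many) head terms; no deeper input is needed.
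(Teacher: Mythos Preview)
Your proof is correct and follows essentially the same approach as the paper's: both split the series at a threshold, control the tail via the geometric bound $|\langle U\delta_{x_n},\delta_y\rangle|<2^{-n}$, and control the finitely many head terms using that each rank-one piece has matrix entries tending to zero. The only cosmetic difference is that you unfold the entries explicitly as $\overline{\alpha_n(y)}\,\alpha_n(y')$, whereas the paper phrases the head estimate via compactness of the rank-one operators $Ue_{x_nx_n}U^*$; these are the same thing.
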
 

\begin{proof} 
Let $\eps>0$. Pick $n_0\in \N$ such that $2^{-n_0}<\eps/2$. Since $Ue_{x_nx_n}U^*$ is compact (it has rank $1$), for all $n\in\N$, we can pick a finite  $A\subset X$ such that 
\[|\langle Ue_{x_nx_n}U^*\delta_y,\delta_{y'}\rangle|< \frac{\eps}{2n_0},\] 
for all $n\in\{1,\ldots,n_0\}$ and all $y,y'\in Y\setminus A$. Then, for all $y,y'\in Y\setminus A$, we have that
\begin{align*}
|\langle P_m\delta_y,\delta_{y'}\rangle |&\leq \sum_{n=1}^{n_0}|\langle Ue_{x_nx_n}U^*\delta_y,\delta_{y'}\rangle|+\sum_{n> n_0}|\langle Ue_{x_nx_n}U^*\delta_y,\delta_{y'}\rangle|\\
&< \frac{\eps}{2}+\sum_{n> n_0}\| Ue_{x_nx_n}U^*\delta_y\|\\
&< \frac{\eps}{2}+\frac{\eps}{2}=\eps.
\end{align*}
This shows that $P_m$ is a ghost.
\end{proof} 

The projections $P_m$, for $m\in\N$, are orthogonal, non-compact,  ghost projections in  $A\coloneqq \Ad U(\ell_\infty(X))$ and $\sum_{m\in\N}P_m$ is the identity in $\cstu(Y)$. Since $\ell_\infty(X)$ is a Cartan masa in $\cstu(X)$,  $A$ is a Cartan masa in $\cstu(Y)$. Therefore,  $A$ is a ghostly Cartan masa in $\cstu(Y)$. 
\end{proof}

\begin{proof}[Proof of Theorem~\ref{C.ThmIsoImpliesRigIsoGhost}] 
Define sets $X_n$, for $n\in \N$, as in the proof of Theorem~\ref{T.ThmIsoImpliesRigIsoGhost}. 
Then $X\setminus X_n$ is infinite for all $n$, and we can 
choose $Z\subseteq X$ such that $Z\cap X_{n+1}\setminus X_n$
has at most one element for all $n$. 
The proof of Claim~\ref{C.2} 
shows that $P_m=\sum_{n\in\N}Ue_{x_nx_n}U^*$ is a ghost projection.  
It belongs to the Cartan masa $U \ell_\infty(X) U^*$ and it is clearly not compact. 
\end{proof} 

The following is as close as we could get to the conclusion of 
Theorem~\ref{T.ThmIsoImpliesRigIsoGhost}. 
It also serves as a basis for a limiting example for possible improvements of Theorem~\ref{T.Approximation} 
(Example~\ref{Ex.Approximate}) promised earlier. 

\begin{example} \label{Ex.Ghost} 
There exist
uniformly locally finite metric spaces $(X,d)$ and $(Y,\partial)$  such that  
$\cstu(Y)$ is    isomorphic to a corner $P\cstu(X)P$ where $P$
is a noncompact ghost projection in $\cstu(X)$. 

The metric space $(X,d)$ is the counterexample to the coarse Baum--Connes conjecture given 
in \cite{higson2002counterexamples} on pages 348--349, a copy of which we assume that the reader has handy. 
Therefore $X=\bigsqcup_{n\in \N} X_n$, each $X_n$ is  finite,  and 
the distance between $x\in X_n$ and $x'\in X_m$ is (for definiteness) $m+n$ if $m\neq n$. 
Each $X_n$ is the set of vertices of a finite graph $G_n$ with the shortest path distance. 
The graphs  $G_n$ were chosen to be   a sequence of $k$-$\e$-expander
graphs for a fixed $k$ and $\e>0$.  
Let $\Delta_n$ be the Laplacian of $G_n$; this is the operator on $\ell_2(X_n)$
given by the matrix 
\[
kI-A, 
\]  
where $A$ is the incidence matrix of $G_n$ and $I$ is the identity. Then $\Delta_n$ is positive and  $\|\Delta_n\|\leq k$ for all $n$. 
Therefore the direct sum $\Delta$ of all $\Delta_n$ is positive and 
belongs to $B(\ell_2(X))$. By the definition,  
$\supp(\Delta)\subseteq \{(x,x'): d(x,x')\leq 1\}$ and therefore $\Delta\in \cstu(X)$. 

Let $P$ denote the projection to $\ker(\Delta)$. 
Since the lowest nonzero eigenvalue of~$\Delta$ is bounded away from zero, 
the restriction of the operator $T\coloneqq 1-\Delta\|\Delta\|^{-1}$ to $\ker(\Delta)^\perp$ has  norm strictly less than 1.  
Therefore the sequence $(T^n)_{n\in\N}$ converges to $P$ 
\emph{in norm}, and $P\in \cstu(X)$.
The range of $P\chi_{X_n}$ consists of constant functions in $\ell_2(X_n)$. 
Thus (writing $m_n=|X_n|$) $P\chi_{X_n}$ is given by the $m_n\times m_n$ matrix 
all of whose entries are equal to $m_n^{-1/2}$. 
Since $|m_n|\to \infty$, $P$ is a ghost.   

 Let $\xi_n$ be a unit vector in the range of $P\chi_{X_n}$. 
Let $Y$ be the metric space with domain $\N$ and the metric  $\partial (m,n)=m+n$. 
Then $U\colon \ell_2(Y)\to \ell_2(X)$ defined by $U(\delta_m)=\xi_m$ 
implements a    isomorphism between $\cstu(Y)$ and $P\cstu(X)P$. 
\end{example} 

The following example shows that  Theorem~\ref{T.Approximation} 
cannot be improved by stating that for $\Phi\colon \cstu(Y)\to \cstu(X)$ the uniformity 
$E\in \cE$ depends only on $F\in \cF$ even when $E=\Delta_Y$. 

\begin{example} \label{Ex.Approximate} There 
exist metrizable uniformly locally finite coarse spaces $(X,\cE)$ and $(Y,\cF)$,   
a     $*$-homomorphism $\Phi\colon \cstu(Y)\to \cstu(X)$, such that  
for every $E\in \cE$ there exist $\e>0$ 
for which  $\Phi(1)$ cannot be $\e$-$E$-approximated. 

Let $(X,\cE)$,  $(Y,\cF)$, $P$ and $\Phi:\cstu(Y)\to P\cstu(X) P$  be  as in Example~\ref{Ex.Ghost}.  
We have that  $\Phi(1)$ is a ghost projection in $\cstu(X)$. Given $E\in \cE$. The fact that the    $*$-homomorphism $\Phi$ is not unital is easily remedied: add a single point $y$ to 
 $Y$ and send $e_{yy}$ to $1-P$ (the complement of the ghost projection 
in $\cstu(X)$). 
\end{example} 

We do not know whether if  the map $\Phi$ as in Theorem~\ref{T.Approximation} and Example~\ref{Ex.Approximate} 
is assumed to be an    isomorphism one can deduce that it has a stronger coarse-like property.

\begin{cor}\label{CorPartitionUniformlyMetrizable}
 Suppose  $(X,\cE)$ and $(Y,\cF)$ are  uniformly  locally finite coarse spaces admitting small  partitions.   Suppose that  all ghost projections in $\cstu(X)$ and $\cstu(Y)$  are compact. If $\cstu(X)$ and $\cstu(Y)$ are    isomorphic, then $(X,\cE)$ and $(Y,\cF)$ are coarsely equivalent. 
\end{cor}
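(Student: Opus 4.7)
The plan is to reduce this to Theorem~\ref{ThmPartitionUniformlyMetrizable} by ruling out the possibility of a non-rigidly-implemented isomorphism, using the ghost projection hypothesis together with Theorem~\ref{C.ThmIsoImpliesRigIsoGhost}.

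First, let $\Phi\colon \cstu(X)\to\cstu(Y)$ be the given isomorphism. By Lemma~\ref{LemmaSpakulaWillett}, $\Phi$ is spatially implemented by some unitary $U\colon\ell_2(X)\to\ell_2(Y)$. The aim is to show that $\Phi$ is in fact rigidly implemented; once this is done, Theorem~\ref{ThmPartitionUniformlyMetrizable} immediately yields a coarse equivalence between $(X,\cE)$ and $(Y,\cF)$, since both spaces admit small partitions by hypothesis.

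Suppose, toward a contradiction, that $\Phi$ is not rigidly implemented. By Theorem~\ref{C.ThmIsoImpliesRigIsoGhost} applied to $\Phi$, one of the following occurs: either there exists a non-compact projection $P\in\ell_\infty(X)$ such that $\Phi(P)$ is a ghost in $\cstu(Y)$, or there exists a non-compact projection $Q\in\ell_\infty(Y)$ such that $\Phi^{-1}(Q)$ is a ghost in $\cstu(X)$. Since $\Phi$ is a $*$-isomorphism, it sends projections to projections, and by Corollary~\ref{LemmaRank} it preserves the rank of operators; in particular it takes non-compact projections to non-compact projections. In the first case, $\Phi(P)$ is therefore a non-compact ghost projection in $\cstu(Y)$, contradicting the assumption that all ghost projections in $\cstu(Y)$ are compact. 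In the second case, $\Phi^{-1}(Q)$ is a non-compact ghost projection in $\cstu(X)$, contradicting the analogous hypothesis on $\cstu(X)$.

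Either alternative is ruled out, so $\Phi$ must be rigidly implemented. Applying Theorem~\ref{ThmPartitionUniformlyMetrizable} now gives the desired coarse equivalence. There is no genuine obstacle here: the entire argument is a packaging of previously proved results, with the ghost projection hypothesis serving precisely to block both alternatives of Theorem~\ref{C.ThmIsoImpliesRigIsoGhost}, thereby promoting the given isomorphism from a mere spatial implementation to a rigid one.
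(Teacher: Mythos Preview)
Your proof is correct and follows exactly the route the paper takes: use Theorem~\ref{C.ThmIsoImpliesRigIsoGhost} together with the ghost-projection hypothesis to force the given isomorphism to be rigidly implemented, and then invoke Theorem~\ref{ThmPartitionUniformlyMetrizable}. The paper's own proof is the one-line statement that the result follows from these two theorems, and your argument is simply that line unpacked (with the rank-preservation observation via Corollary~\ref{LemmaRank} making explicit why the ghost projection produced is non-compact).
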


\begin{proof}
This follows straightforwardly from Theorem \ref{C.ThmIsoImpliesRigIsoGhost} and  Corollary \ref{ThmPartitionUniformlyMetrizable}.
\end{proof}

\section{Rigidity for spaces which coarsely embed into Hilbert space} \label{SectionRigidityHilbert}
In order to  prove  \eqref{ThmIsoImpliesRigIso} from the introduction (Theorem~\ref{T.CorHilbertEmbRigidity} below),  
we need to introduce the notion of the Rips complex and coarse connected components. 

\begin{defi}
Let $(X,\cE)$ be a coarse space and let $E\in \cE$ be  symmetric and such that $\Delta_X\subset E$. Let 
\[P_E(X)=\{A\subset X\mid (x,x')\in E,\  \forall x,x'\in A\}.\]
We call $P_E(X)$ the \emph{Rips complex of $(X,\cE)$ over $E$}. We define an equivalence relation $\sim_E$ on $P_E(X)$ by setting $A\sim_E A'$ if there exist $x_1,\ldots,x_n\in X$ such that $x_1\in A$, $x_n\in A'$, and $(x_m,x_{m+1})\in E$, for all $m\in\{1,\ldots,n-1\}$.
\end{defi}

\begin{defi} Let $(X,\cE)$ be a uniformly locally finite coarse space. We say that $(X,\cE)$ has an \emph{infinite coarse component} if there exists a symmetric $E\in \cE$, with $\Delta_X\subset E$, such that the Rips complex $P_E(X)$ has an infinite $\sim_E$-equivalence class. Otherwise, we say that $(X,\cE)$ has \emph{only finite coarse components}.
\end{defi}

Let us present  the prototypical example of a coarse (metric) space with only finite coarse connected components. Let $(X_n)_{n\in \N}$ be a sequence of finite metric spaces  which are uniformly locally finite, uniformly  in the index $n$. We define a metric space $(X,d)$, called the \emph{coarse disjoint union of $(X_n)_n$}, by letting $X=\bigsqcup_{n\in\N}X_n$ and picking a metric $d$ on $X$ which is  given by the metric on each component and such that $d(X_n, X_m ) \to\infty$ as $ n + m \to\infty$.  Any such metric is unique up to coarse equivalence. It is straightforward to check that  $X$ has only finite coarse components.

The following lemma is proved by going to a library.

\begin{lemma}\label{PropFinnSell}
Let $(X,d)$ be a uniformly locally finite metric space with only finite coarse components which coarsely embeds into a Hilbert space. Then all ghost projections in $\cstu(X)$ are compact. \end{lemma}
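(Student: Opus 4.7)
The plan is to reduce the statement to a theorem of Finn-Sell, whose proof uses the coarse groupoid and genuinely does belong in a library. First I would observe that the hypothesis of having only finite coarse components lets us realize $(X,d)$ as a coarse disjoint union of finite metric spaces. Fix a generating sequence $E_1 \subseteq E_2 \subseteq \ldots$ of symmetric entourages containing $\Delta_X$; for each $k$ the transitive closure $\sim_{E_k}$ partitions $X$ into finite pieces, and by uniform local finiteness these partitions are coarser as $k$ grows. A standard diagonal argument then produces finite subsets $(X_n)_{n\in\N}$ partitioning $X$ with $d(X_n,X_m)\to\infty$ as $n+m\to\infty$, i.e.\ $(X,d)$ is coarsely equivalent to the coarse disjoint union $\bigsqcup_n X_n$ described before Lemma~\ref{PropFinnSell}. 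Since compactness of ghost projections is preserved under isomorphism of uniform Roe algebras (and in particular under coarse equivalence), I may as well assume $X=\bigsqcup_n X_n$.

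The main step is then to invoke Finn-Sell's theorem: for a coarse disjoint union of finite metric spaces that coarsely embeds into a Hilbert space, the coarse boundary groupoid $\partial G(X) = G(X)|_{\beta X \setminus X}$ is a-T-menable (hence amenable in this étale/principal setting, since a-T-menability plus properness of the unit space suffices). The proof of this fact uses the negative-type kernel witnessing the coarse embedding and extends it through the Stone-\v{C}ech compactification; this is the library portion of the argument.

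Once the boundary groupoid is (topologically) amenable, I would combine this with the description of the ghost ideal due to Roe and Willett: ghost operators in $\cstu(X)$ correspond to the kernel of the canonical quotient $\cstu(X) \to \cst_r(\partial G(X))$, which fits into a short exact sequence with $K(\ell_2(X))$. Amenability of $\partial G(X)$ ensures that $\cst_r(\partial G(X)) = \cst_{\max}(\partial G(X))$, and then the argument of \cite{RoeWillett2014} (specifically the direction showing that when the boundary groupoid is amenable all ghost projections are compact, cf.\ their Theorem~1.3 and the surrounding discussion for property A, which is subsumed by amenability of the boundary) applies verbatim: any projection $P$ whose image in $\cst_r(\partial G(X))$ is zero must already be compact.

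The step I expect to be the main obstacle — and the one requiring the library — is the passage from coarse embeddability of a space with only finite coarse components to amenability of its boundary groupoid. For genuine box spaces this is due to Finn-Sell and refines an earlier observation of Chen-Wang-Wang on fibered coarse embeddings; the remaining reductions (coarse disjoint union form, and the translation between groupoid amenability and compactness of ghost projections) are routine once that input is in hand.
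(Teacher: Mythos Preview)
There is a genuine gap. Your step ``a-T-menable $\Rightarrow$ amenable'' for the boundary groupoid is false, and the error is not cosmetic: if $\partial G(X)$ were amenable then the Roe--Willett argument you invoke would show that \emph{all} ghost operators in $\cstu(X)$ are compact, not just the projections. But the space of \cite{ArzhantsevaGuentnerSpakula2012} is a box space (hence has only finite coarse components), coarsely embeds into a Hilbert space, and fails property~A; by \cite[Theorem~1.3]{RoeWillett2014} it therefore carries non-compact ghost operators. So a space satisfying every hypothesis of the lemma can have a non-amenable boundary groupoid, and your chain breaks exactly at the parenthetical ``hence amenable in this \'etale/principal setting, since a-T-menability plus properness of the unit space suffices.'' No such implication exists.

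The paper's proof sidesteps groupoid amenability entirely: it quotes Yu's theorem that coarse embeddability into Hilbert space implies the coarse Baum--Connes conjecture, and then Finn-Sell's Proposition~35, which extracts compactness of ghost \emph{projections} from the coarse assembly map being an isomorphism via a $K$-theory argument. If you want to salvage your groupoid outline, the correct replacement for the faulty step is Tu's theorem (a-T-menable groupoids satisfy Baum--Connes with coefficients), after which one argues in $K$-theory that a ghost projection has trivial class and is therefore compact --- this is essentially what Finn-Sell does, and it only controls projections, consistent with the AGS example. Your reduction to a coarse disjoint union is fine but unnecessary for the paper's route.
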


\begin{proof} 
A metric $d$ is uniformly discrete if $\inf_{x\neq x'}d(x,x')>0$. 
It is clear that every  coarse metric space $(X,d)$ carries a uniformly discrete metric $\partial$ 
compatible with its coarse structure (let $\partial(x,x')=d(x,x')+1$ if $x\neq x'$). 

A uniformly locally finite metric space which coarsely embeds into a Hilbert space must satisfy the coarse Baum--Connes conjecture by  \cite{Yu2000}, Theorem 1.1.
This mplies that every ghost projection in $\cstu(X)$ is compact  
 (this is the second sentence of Proposition 35 of \cite{FinnSell2014}; note that $\mu_0$ should be $\mu$). 
 \end{proof}

We are now ready to prove 
\eqref{ThmIsoImpliesRigIso} from the introduction.

\begin{theorem}\label{T.CorHilbertEmbRigidity}
If 
$(X,\cE)$ and $(Y,\cF)$  
are uniformly locally finite coarse spaces 
such  that 
both $(X,\cE)$ and $(Y,\cF)$ coarsely embed into a Hilbert space
then every isomorphism between $\cstu(X)$ and $\cstu(Y)$ 
is rigidly implemented. 
\end{theorem}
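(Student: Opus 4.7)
My proof strategy is contradiction. Suppose $\Phi:\cstu(X)\to\cstu(Y)$ is an isomorphism that is not rigidly implemented. By Lemma \ref{LemmaSpakulaWillett}, $\Phi=\Ad U$ for a unitary $U:\ell_2(X)\to\ell_2(Y)$. Applying Theorem \ref{C.ThmIsoImpliesRigIsoGhost} (after possibly interchanging the roles of $X$ and $Y$) produces a non-compact projection $P\in\ell_\infty(X)$ such that $Q:=\Phi(P)\in\cstu(Y)$ is a ghost; since $\Phi$ preserves rank by Corollary \ref{LemmaRank}, $Q$ is in fact a non-compact ghost projection in $\cstu(Y)$. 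The goal now is to invoke Lemma \ref{PropFinnSell} to contradict the existence of $Q$.

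If $Y$ itself has only finite coarse components, Lemma \ref{PropFinnSell} applies directly. Otherwise, I pass to a subspace. Inspecting the proof of Theorem \ref{C.ThmIsoImpliesRigIsoGhost}, I may take $P=\chi_Z$ with $Z=\{x_n:n\in\N\}$ such that $\xi_n:=U\delta_{x_n}$ satisfies $\|\xi_n\|_\infty<2^{-n}$. Fix an increasing sequence $(F_k)_{k\in\N}$ of entourages generating $\cF$ (this uses metrizability of $Y$, which follows from its coarse embeddability into Hilbert space together with connectedness). Using $\|\xi_n\|_\infty\to 0$, I inductively select a subsequence $(n_k)$ and pairwise disjoint finite sets $B_k\subseteq Y$ with $\|\chi_{B_k}\xi_{n_k}\|^2>1-2^{-k}$ and such that $B_k$ is $F_k$-separated from $B_1\cup\cdots\cup B_{k-1}$; this is possible because the $F_k$-thickening of the previous finite union is still finite (uniform local finiteness), and the restriction $\xi_n|_A$ to any fixed finite $A$ has $\ell_2$-norm bounded by $|A|^{1/2}\|\xi_n\|_\infty$, which tends to zero. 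The subspace $Y':=\bigcup_k B_k$ with the inherited coarse structure then has only finite coarse components: any entourage of $Y$ lies in some $F_N$, hence can connect only those $B_k$ with $k\le N$.

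The main obstacle is the final step: producing a non-compact ghost projection inside $\cstu(Y',\cF|_{Y'})$ from the data above. The natural candidate is $Q':=\sum_k\eta_k\otimes\bar\eta_k$, where $\eta_k:=\chi_{B_k}\xi_{n_k}/\|\chi_{B_k}\xi_{n_k}\|$ are pairwise orthogonal unit vectors with disjoint finite supports; this is a projection of infinite rank, and the ghost property for its entries is inherited from $Q$. The subtle issue is verifying that $Q'\in\cstu(Y',\cF|_{Y'})$, since $\mathrm{diam}(B_k)$ may be unbounded and so $\bigcup_k B_k\times B_k$ need not be contained in a single entourage of $Y'$. I would handle this by refining the choice of $B_k$ using operator-norm approximations of $Q$ by elements $S_k\in\cstu[Y]$ with $\|Q-S_k\|<2^{-k}$ and $\supp(S_k)$ contained in an entourage of $Y$, cutting $B_k$ out from within the support of such approximations so that the resulting $Q'$ is a norm limit of operators whose supports are genuinely entourage-bounded in $Y'$. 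Lemma \ref{PropFinnSell}, applied to the uniformly locally finite metric space $Y'$ (which coarsely embeds into Hilbert space as a subspace of $Y$ and has only finite coarse components), then forces $Q'$ to be compact, contradicting its infinite rank and completing the proof.
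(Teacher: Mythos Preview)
Your overall strategy matches the paper's: assume non-rigidity, extract a sequence $(x_n)$ with $\|U\delta_{x_n}\|_\infty<2^{-n}$, pass to a subsequence and choose disjoint finite sets $B_k\subset Y$ with growing mutual distance approximating the supports of $\xi_{n_k}=U\delta_{x_{n_k}}$, and then obtain a non-compact ghost projection in $\cstu(Y')$ for $Y'=\bigcup_k B_k$, contradicting Lemma~\ref{PropFinnSell}. The detour through Theorem~\ref{C.ThmIsoImpliesRigIsoGhost} is harmless but unnecessary, since you immediately reopen its proof anyway.

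Where your argument is incomplete is exactly the point you flag: showing $Q'\in\cstu(Y')$. Your proposed fix (``cutting $B_k$ out from within the support'' of norm approximations to $Q$) is vague, and it is not clear how to carry it out while preserving the other constraints on the $B_k$. The paper resolves this more directly. Write $S_{n_k}=Ue_{x_{n_k}x_{n_k}}U^*$ and let $P_k$ be the rank-one projection onto $\eta_k$. Since $\|\xi_{n_k}-\chi_{B_k}\xi_{n_k}\|<2^{-k}$, one has $\|S_{n_k}-P_k\|<2^{-k+2}$, so $\sum_k(S_{n_k}-P_k)$ converges \emph{in norm}. As $\sum_k S_{n_k}=\Phi(\chi_{\{x_{n_k}:k\}})\in\cstu(Y)$, this gives $\tilde P:=\sum_k P_k\in\cstu(Y)$. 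Now $\tilde P\in\prod_k B(\ell_2(B_k))$, and Lemma~\ref{LemmaRestrictionToDiag} immediately yields $\tilde P\in\cstu(Y')$. No control on $\operatorname{diam}(B_k)$ is needed: membership in $\cstu(Y')$ comes from membership in $\cstu(Y)$ plus block-diagonality, not from the supports of the $P_k$ being entourage-bounded.
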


\begin{proof}
Since each of  $(X,\cE)$ and $(Y,\cF)$ coarsely embeds into a Hilbert space (which is in particular a metric space), it follows that both $\cE$ and $\cF$ are countably generated. Hence,  $(X,\cE)$ and $(Y,\cF)$ are metrizable \cite[Theorem 2.55]{RoeBook}. Let $d$ and $\partial $ be metrics on $X$ and $Y$, respectively, such that $\cE_d=\cE$ and $\cE_\partial=\cF$. 

Let $U:\ell_2(X)\to\ell_2(Y)$ be a unitary spatially implementing an    isomorphism between $\cstu(X)$ and $\cstu(Y)$. By symmetry, it suffices to show that there exist $\delta>0$ and $f:X\to Y$ such that $|\langle U\delta_x,\delta_{f(x)}\rangle|\geq \delta$, for all $x\in X$.  Assuming this is not the case, we obtain a sequence $(x_n)_{n\in\N}$ of distinct points in $X$ such that $\|Ue_{x_nx_n}U^*\delta_y\|<2^{-n}$, for all $n\in\N$ and all $y\in Y$. For each $n\in \N$,  set 
\[S_n=Ue_{x_nx_n}U^*.\]
So, $S_n$ is a projection on the $1$-dimensional subspace generated by $\zeta_n\coloneqq U\delta_{x_n}$, i.e., $S_n=\langle \cdot, \zeta_n\rangle \zeta_n$, for all $n\in\N$. Since  $(x_n)_{n\in\N}$
is a sequence of distinct points,  $\sum_{n\in I}S_n$ converges in the strong operator topology to an operator in $\cstu(Y)$, for all $I\subset \N$.

Let us pick a sequence of finite subsets $(Y_k)_{k\in \N}$ of $Y$ and a subsequence $(\zeta_{n_k})_{k\in\N}$ such that

\begin{enumerate}[(i)]
\item $Y_{k}\cap Y_{\ell}=\emptyset$, for all $k\neq \ell$ in $\N$,
\item $\partial(Y_{k},Y_{\ell})\to \infty$, as $k+\ell\to \infty$, and
\item $\|\zeta_{n_k}-\chi_{Y_k}\zeta_{n_k}\|< 2^{-k}$, for all $k\in\N$.
\end{enumerate}

We proceed by induction on $k\in\N$. Let $Y_1\subset Y$ be a finite subset such that $\|\zeta_1-\chi_{Y_1}\zeta_1\|< 2^{-1}$ and set $n_1=1$. Let $k\geq 2$ and assume that $Y_j$ and $n_j$ have been defined, for all $j\leq k-1$. Fix $y_0\in Y$ and pick $r>0$ such that
\[
\partial \Big(\bigcup_{j=1}^{k-1}Y_j,B_r(y_0)^\complement\Big)>k.
\]
Let $Y'=B_r(y_0)^\complement$ (the complement of the $r$-ball centered at $y_0$). Since $(\zeta_n)_{n\in\N}$ is an orthonormal sequence,  $(\zeta_n)_{n\in\N}$ is weakly null.  Therefore, since $B_r(y_0)$ is finite, we can pick $n_{k}\in\N$ such that $\|\zeta_{n_k}-\chi_{Y'}\zeta_{n_k}\|< 2^{-k-1}$. Pick a finite $Y_k\subset Y'$ such that $\|\chi_{Y'}\zeta_{n_k}-\chi_{Y_k}\zeta_{n_k}\|< 2^{-k-1}$. The sequences $(Y_k)_{k\in\N}$ and $(\zeta_{n_k})_{k\in\N}$ have the desired properties.

For each $k\in\N$, let $\xi_k=\chi_{Y_k}\zeta_{n_k}/\|\chi_{Y_k}\zeta_{n_k}\|$ and let  $P_k\in B(\ell_2(Y))$ be the $1$-dimensional  projection on the subspace generated by $\xi_k$. So, $P_k=\langle \cdot, \xi_k\rangle \xi_k$. We clearly have  that 
\[\|S_{n_k}-P_k\|< 2^{-k+2}\ \ \text{and}\ \  P_k=\chi_{Y_k}P_k\chi_{Y_k},\]
for all $k\in\N$. In particular, $P_k\in B(\ell_2(Y_k))$ and $\|P_k\delta_y\|< 2^{-k+3}$, for all $k\in\N$ and all $y\in Y$. 

Since $Y_k$ is finite, $\supp(P_k)$ is finite and $P_k\in\cstu[Y]$. Therefore, $S_{n_k}-P_k\in \cstu(Y)$, for all $k\in\N$. Since 
\[\sum_{k\in \N}\| S_{n_k}-P_k\|<\infty,\]
it follows that $\sum_{k\in \N}(S_{n_k}-P_k)$ converges in norm to an operator in $\cstu(Y)$. Hence, as $\sum_{k\in \N}S_{n_k}\in \cstu(Y)$, we have that $\sum_{k\in \N} P_k\in \cstu(Y)$. 

Let $\tilde{P}=\sum_{k\in\N} P_k$. By  (i) above, and since $P_k\in B(\ell_2(Y_k))$ and $\xi_k\in\ell_2(Y_k)$, we have that $\tilde{P}\xi_k=\xi_k$, for all $k\in\N$. Let $\tilde{Y}=\bigcup_{k\in \N}Y_k$. Therefore, since  
\[\tilde{P}\in \prod_{k\in\N}B(\ell_2(Y_{k})),\] 
 Lemma \ref{LemmaRestrictionToDiag} implies that $\tilde{P}\in \cstu(\tilde{Y})$.

\begin{claim}\label{Claim1} The projection $\tilde{P}$ is a ghost  in $ \cstu(\tilde{Y})$.
\end{claim} 
\begin{proof} 
Since $\|P_k\delta_y\|\leq 2^{-k+3}$, for all $k\in\N$ and all $y\in Y$,  proceeding  as in the proof of 
Theorem~\ref{T.ThmIsoImpliesRigIsoGhost}, we obtain that $\tilde{P}$ is a ghost. Since $P_k$ is a projection, for all $k\in\N$, and since $P_kP_\ell=0$, for all $k\neq \ell$, we have that $\tilde{P}$ is a projection. 
\end{proof} 

\begin{claim}\label{Claim2} All ghost projections in $\cstu(\tilde{Y})$ are compact.
\end{claim} 

\begin{proof} 
Since $Y$ coarsely embeds into a Hilbert space, so does  $\tilde{Y}$.  By our discussion on finite coarse components preceding this lemma, it is clear that  $(\tilde{Y},\partial)$  has only finite coarse components.  Therefore,   Lemma~\ref{PropFinnSell} implies that all ghost projections in $\cstu(\tilde{Y})$ must be compact.
\end{proof} 

Claim~\ref{Claim1} and Claim~\ref{Claim2} together imply 
 that $\tilde{P}$ is  compact, contradicting the facts that $(\xi_k)_{k\in\N}$ is orthonormal and that $\tilde{P}\xi_k=\xi_k$, for all $k\in\N$. 
\end{proof}

\section{Isomorphism between algebraic uniform Roe algebras}\label{SectionIsoAlgebraicRoeAlg}

In this section, we prove rigidity of algebraic uniform Roe algebras for general uniformly locally finite coarse spaces
 (\eqref{ThmMAIN} from the introduction). This is summarized in Theorem \ref{T.ThmMAIN} below and part of its proof is inspired by the methods in \cite{WhiteWillett2017}. We refer the reader to \cite[Proposition 3.10]{ChungLi2018} for a similar result about uniformly locally finite metric spaces.

\begin{thm}\label{T.ThmMAIN}
Let $(X,\mathcal{E})$ and $(Y,\mathcal{F})$ be uniformly locally finite coarse spaces. The following are equivalent.

\begin{enumerate}[(i)]
\item $(X,\mathcal{E})$ and $(Y,\mathcal{F})$ are bijectively coarsely equivalent.
\item $\cstu[X]$ and $\cstu[Y]$ are    isomorphic.
\item There is an    isomorphism $\Phi:\cstu(X)\to \cstu (Y)$ such that $\Phi(\ell_\infty(X))\subset\ell_\infty(Y)$. 
\end{enumerate} 
\end{thm}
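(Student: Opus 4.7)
The plan is to close the cycle of equivalences via the four implications $(i)\Rightarrow(ii)$, $(i)\Rightarrow(iii)$, $(iii)\Rightarrow(i)$, and $(ii)\Rightarrow(i)$. The forward implications from $(i)$ are routine: given a bijective coarse equivalence $f\colon (X,\cE)\to (Y,\cF)$, the unitary $U_f\colon\ell_2(X)\to\ell_2(Y)$ defined by $U_f\delta_x=\delta_{f(x)}$ induces, via $\Ad U_f$, an isomorphism $\cstu(X)\to\cstu(Y)$ which restricts to an isomorphism $\cstu[X]\to\cstu[Y]$ (because $f$ and $f^{-1}$ are coarse) and manifestly carries $\ell_\infty(X)$ onto $\ell_\infty(Y)$.

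For $(iii)\Rightarrow(i)$, Lemma~\ref{LemmaSpakulaWillett} writes $\Phi=\Ad U$ for some unitary $U$. Since $\ell_\infty(X)$ is a masa in $\cstu(X)$, its image $\Phi(\ell_\infty(X))$ is a masa in $\cstu(Y)$, and combined with $\Phi(\ell_\infty(X))\subseteq\ell_\infty(Y)$ and maximality of $\ell_\infty(Y)$ we get $\Phi(\ell_\infty(X))=\ell_\infty(Y)$. Gelfand duality identifies the restriction with a homeomorphism $\beta Y\to \beta X$ that maps isolated points to isolated points, yielding a bijection $f\colon X\to Y$ with $\Phi(e_{xx})=e_{f(x)f(x)}$; tracing this through the spatial implementation gives $U\delta_x=\lambda_x\delta_{f(x)}$ with $|\lambda_x|=1$. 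To see $f$ is a coarse equivalence, fix $E\in\cE$ and form $S_E=\sum_{(x,x')\in E}e_{xx'}\in\cstu[X]$, whose operator norm is controlled by uniform local finiteness. Then $\Phi(S_E)=\sum_{(x,x')\in E}\lambda_x\overline{\lambda_{x'}}e_{f(x)f(x')}\in\cstu(Y)$ has entries of modulus $1$ on $(f\times f)(E)$ with no cancellation since $f$ is injective; approximating $\Phi(S_E)$ within norm $<1/2$ by some $T\in\cstu[Y]$ forces $(f\times f)(E)\subseteq\supp(T)\in\cF$, so $f$ is coarse, and the symmetric argument for $\Phi^{-1}$ handles $f^{-1}$.

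For $(ii)\Rightarrow(i)$, again $\Phi=\Ad U$. The algebraic ``coarse-like'' property (Theorem~\ref{ThmAlgebraicMain}, the zero-error analogue of Theorem~\ref{T.Approximation} valid without any smallness assumption) applied to $\Delta_X\in\cE$ produces $F\in\cF$ with $\supp(U\delta_x)\times\supp(U\delta_x)=\supp(\Phi(e_{xx}))\subseteq F$ for every $x$, so uniform local finiteness of $\cF$ gives a uniform bound $k=\sup_y|F_y|$ on $|\supp(U\delta_x)|$; symmetrically $|\supp(U^*\delta_y)|\leq k'$. Defining $f(x)\in Y$ and $g(y)\in X$ to maximise the relevant coefficients yields $|(U\delta_x)(f(x))|\geq k^{-1/2}$ and $|(U^*\delta_y)(g(y))|\geq (k')^{-1/2}$. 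Another application of Theorem~\ref{ThmAlgebraicMain} to each $E\in\cE$ supplies $F_E\in\cF$ with $\supp(\Phi(e_{xx'}))\subseteq F_E$ for every $(x,x')\in E$, and since $(f(x),f(x'))\in\supp(U\delta_x)\times\supp(U\delta_{x'})$ we obtain $(f\times f)(E)\subseteq F_E$, so $f$ is coarse, and likewise $g$. Because both $x$ and $g(f(x))$ lie in $\supp(U^*\delta_{f(x)})$, which sits in a fixed entourage of $\cE$ coming from $\Phi^{-1}(e_{f(x)f(x)})$, the composition $g\circ f$ is close to $\Id_X$ and, analogously, $f\circ g$ is close to $\Id_Y$. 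Finally, one upgrades this coarse equivalence to a bijective one by a Hall/marriage-type matching argument in the bipartite graph with edges determined by the entourage $F''$ above, whose hypotheses are guaranteed by uniform local finiteness of $\cE$ and $\cF$.

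The cleanest step is $(iii)\Rightarrow(i)$, which is essentially algebraic once the bijection is extracted from Gelfand duality. The main technical obstacle is the final step of $(ii)\Rightarrow(i)$: passing from a coarse equivalence to a \emph{bijective} one in the general coarse (not metric) setting. For uniformly locally finite metric spaces this is a well-known consequence of Hall's theorem; here one must rephrase it using entourages and the uniform bounds $k,k'$ furnished by Theorem~\ref{ThmAlgebraicMain} and Proposition~\ref{partition}.
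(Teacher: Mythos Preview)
Your $(iii)\Rightarrow(i)$ is correct and in fact more direct than the paper's: packaging all of $E$ into the single operator $S_E$ and approximating $\Phi(S_E)$ in $\cstu[Y]$ extracts $(f\times f)(E)\in\cF$ in one stroke, whereas the paper first proves a general coarse-like property (Theorem~\ref{ThmMainSpecialCase}: $\supp(T)\subseteq E\Rightarrow\supp(\Phi(T))\subseteq F$ for \emph{all} $T$) by a contradiction argument and only then specializes to the $e_{xx'}$. For $(ii)\Rightarrow(i)$ you and the paper diverge in organization but not in substance. The paper proves $(ii)\Rightarrow(iii)$ instead: it uses Hall and K\"onig's Cantor--Schr\"oder--Bernstein to build a bijection $h\colon Y\to X$ with $h(y)\in\supp(U^*\delta_y)$, defines a unitary $V$ by $V(U\delta_x)=\delta_{h^{-1}(x)}$, checks that $\supp(V)$ lies in a fixed entourage so that $V\in\cstu[Y]$, and then $\Ad(V)\circ\Phi$ sends $\ell_\infty(X)$ onto $\ell_\infty(Y)$. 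Your intermediate non-bijective coarse equivalence $(f,g)$ is actually not needed: once the Hall/K\"onig bijection $h$ is in hand, the very same Theorem~\ref{ThmAlgebraicMain} argument you used for $f$ shows that $h$ is itself a bijective coarse equivalence, since $h(x)\in\supp(U\delta_x)$ forces $(h(x'),h(x))\in\supp(\Phi(e_{xx'}))\subseteq F_E$.

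There is, however, a real gap in your Hall step. Uniform local finiteness only tells you that the bipartite graph $x\sim y\iff y\in\supp(U\delta_x)$ has uniformly bounded degrees; it does \emph{not} give the marriage condition $\bigl|\bigcup_{x\in B}\supp(U\delta_x)\bigr|\geq|B|$. That inequality holds for a different reason, namely orthogonality: the rank-one projections $\Phi(e_{xx})=Ue_{xx}U^*$, for $x\in B$, have pairwise orthogonal ranges, each contained in $\ell_2\bigl(\bigcup_{x\in B}\supp(U\delta_x)\bigr)$, so that space has dimension at least $|B|$. This is precisely the paper's Lemma~\ref{LemmaCardXB}, and it is the genuine input to Hall (Lemma~\ref{LemmaInjection}). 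With this correction, your approach and the paper's share the same Hall/K\"onig core and only differ in whether one records the intermediate unitary $V$ or the bijection $h$ directly.
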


Although the implication (i) $\Rightarrow$ (ii) in Theorem \ref{T.ThmMAIN}   is quite straightforward and it  has already been proved for metric spaces with bounded geometry (\cite{WhiteWillett2017}, Corollary 1.16),  for the convenience of the reader and for completeness, we present a proof of this result here.

\begin{proof}[Proof of (i) $\Rightarrow$ (ii) of Theorem \ref{T.ThmMAIN}]
Let $f:(X,\mathcal{E})\to (Y,\mathcal{F})$ be a bijective coarse equivalence. Define an operator $U:\ell_2(X)\to \ell_2(Y)$ by letting  $U\delta_x=\delta_{f(x)}$, for each $x\in X$. Since $f$ is a bijection, it follows that $U$ is a unitary isomorphism. Let $E\in\mathcal{E}$. So there exists $F\in\mathcal{F}$ such that $(x,x')\in E$ implies $(f(x),f(x'))\in F$. Let us show that $\supp(UTU^*)\subset F$, for all $T\in \cstu(X)$ with $\supp(T)\subset E$.

Notice that
\[\langle UTU^*\delta_y,\delta_{ y'}  \rangle=\langle TU^*\delta_y,U^*\delta_{ y'}  \rangle=\langle T\delta_{f^{-1}(y)},\delta_{f^{-1}(y')}  \rangle\neq  0\]
if and only if $(f^{-1}(y),f^{-1}(y'))\in \supp(T)$. Say  $(y,y')\not\in F$. Then, by our choice of $F$,    $(f^{-1}(y),f^{-1}(y'))\not\in E$. Therefore,  as $\supp(T)\subset E$, this implies that $\langle UTU^*\delta_y,\delta_{ y'}  \rangle= 0$. This shows that $\supp(UTU^*)\subset F$. 

Define an    isomorphism $\Phi:B(\ell_2(X))\to B(\ell_2(Y))$ by letting $\Phi(T)=UTU^*$, for all $T\in B(\ell_2(X))$. The discussion above shows that $\Phi(\cstu[X])\subset \cstu[Y]$ and, by symmetry,  it follows that $\Phi^{-1}(\cstu[Y])\subset  \cstu[X]$. Therefore, $\Phi$ is an    isomorphism between $\cstu[X]$ and $\cstu[Y]$. 
\end{proof}

We now turn to the proof of (ii) $\Rightarrow$ (iii) in Theorem \ref{T.ThmMAIN}.  We show that if $\Phi:\cstu[X]\to \cstu[Y]$ is an    isomorphism, then $\Phi$ satisfies the following `coarse-like' property: there exists  an assignment $E\in\mathcal{E}\mapsto F_E\in\mathcal{F}$ such that $\supp(T)\subset E$ implies $\supp(\Phi(T))\subset F_E$, for all $T\in \cstu[X]$ (see Theorem \ref{ThmAlgebraicMain} below).

\begin{lemma}\label{LemmaFiniteSupp}
Let $(X,\mathcal{E})$  be a uniformly locally finite coarse space. Then $\supp(T)$ is finite, for every 
rank $1$ operator $T\in \cstu[X]$. 
\end{lemma}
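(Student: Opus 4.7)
Any rank-$1$ operator $T$ on $\ell_2(X)$ has the form $T(\zeta)=\langle \zeta,\eta\rangle \xi$ for some $\xi,\eta\in\ell_2(X)$, so that
\[
\langle T\delta_x,\delta_{x'}\rangle=\overline{\eta(x)}\,\xi(x'),
\]
and therefore $\supp(T)=A\times B$, where $A\coloneqq\{x\in X\mid \eta(x)\neq 0\}$ and $B\coloneqq\{x'\in X\mid \xi(x')\neq 0\}$. My plan is to show that both $A$ and $B$ must be finite, whence $\supp(T)=A\times B$ is finite.

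Since $T\in\cstu[X]$, there exists $E\in\cE$ with $\supp(T)\subseteq E$. By uniform local finiteness of $(X,\cE)$, set $N\coloneqq \sup_{x\in X}\max(|E_x|,|E^x|)<\infty$ (recall $|E^x|=|E^{-1}_x|$ and $E^{-1}\in\cE$). Assume for contradiction that $A$ is infinite. As $T\neq 0$, fix some $x'\in B$; then for every $x\in A$ we have $(x,x')\in A\times B\subseteq E$, so $A\subseteq E^{x'}$, contradicting $|E^{x'}|\leq N<\infty$. Hence $A$ is finite, and the symmetric argument using $E_x$ for any fixed $x\in A$ shows that $B$ is finite as well.

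The main (and only) obstacle worth noting is remembering that a rank-$1$ operator factors as an outer product so that its support is literally a product set; once this is observed, uniform local finiteness immediately forces each factor to be finite. No further machinery is required.
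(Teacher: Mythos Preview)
Your proof is correct and follows essentially the same approach as the paper: write the rank-$1$ operator as an outer product $\xi\odot\eta$, observe that its support is (contained in) $\supp(\eta)\times\supp(\xi)$, and then use that $\supp(T)\in\cE$ together with uniform local finiteness to force each factor to be finite. You have simply spelled out the details of the last step a bit more explicitly than the paper does.
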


\begin{proof}
Since $T$ is a  rank $1$ operator, $T=\xi\odot \eta$ for some vectors $\xi,\eta\in \ell_2(X)$. Since $(X,\cE)$ is uniformly locally finite and $\supp(T)\in \cE$, the supports of $\xi$ and $\eta$ must be finite. As   
$\supp(\xi\odot \eta)\subseteq\supp(\xi)\times \supp(\eta)$, the proof is finished. 
\end{proof}

The following technical lemma will be essential in the proof of Theorem \ref{ThmAlgebraicMain}.

\begin{lemma}\label{LemmaAlgebraicMain}
Let $(X,\mathcal{E})$ and $(Y,\mathcal{F})$ be uniformly locally finite coarse spaces and assume that $\Phi:\cstu[X]\to \cstu[Y]$ is a    isomorphism. Let $E\in\mathcal{E}$ and let $(T_j)_{j\in J} $ be a bounded family of rank $1$ operators in $\cstu[X]$  with mutually orthogonal images and such that $\supp(T_j)\subset E$, for all $j\in J$. Then,  \[\bigcup_{j\in J}\supp(\Phi(T_j))\in \mathcal{F}.\] 
\end{lemma}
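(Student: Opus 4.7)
My approach will combine a strong-operator-topology (SOT) convergence argument with a Baire-category step to avoid cancellation. Write each $T_j = \xi_j \odot \eta_j$ as a rank-one operator and rescale so that $\|\xi_j\|=1$; the hypothesis that the images are mutually orthogonal then makes $(\xi_j)_{j\in J}$ an orthonormal family, while $\|\eta_j\|\le M$ for some uniform $M$. The containment $\supp(\xi_j)\times\supp(\eta_j)=\supp(T_j)\subseteq E$, combined with $n:=\sup_{x\in X}\max(|E_x|,|E^x|)<\infty$ from uniform local finiteness, forces $|\supp(\xi_j)|,|\supp(\eta_j)|\le n$ (for any $y_0\in\supp(\eta_j)$ we have $\supp(\xi_j)\subseteq E^{y_0}$). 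The same reasoning yields the key counting estimate: for each fixed $x'\in X$, the set $\{j\in J:x'\in\supp(\eta_j)\}$ has cardinality at most $n$, because all such $\xi_j$ lie in the finite-dimensional space $\ell_2(E^{x'})$ of dimension at most $n$ and are orthonormal.

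This counting bound ensures that for every $\lambda\in\D^J$ the series $S_\lambda:=\sum_{j}\lambda_j T_j$ converges in the strong operator topology to a bounded operator with $\supp(S_\lambda)\subseteq E$, hence $S_\lambda\in\cstu[X]$; indeed, by orthonormality of the $\xi_j$'s, $\|S_\lambda v\|^2 = \sum_j |\lambda_j|^2 |\langle v,\eta_j\rangle|^2 \le M^2 n \|\lambda\|_\infty^2 \|v\|^2$ for every $v\in\ell_2(X)$. Since $\Phi$ is spatially implemented and therefore SOT-continuous on bounded sets (Lemma \ref{LemmaSpakulaWillett}), $\Phi(S_\lambda)=\sum_j\lambda_j\Phi(T_j)\in\cstu[Y]$ for every $\lambda$, and in particular $\supp(\Phi(S_\lambda))\in\mathcal{F}$.

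Since $\supp(\Phi(S_\lambda))\subseteq\bigcup_j\supp(\Phi(T_j))$ always, it will suffice to produce a single $\lambda\in\T^J$ at which no cancellation occurs at any pair in $\bigcup_j\supp(\Phi(T_j))$: for such a $\lambda$, $\bigcup_j\supp(\Phi(T_j))\subseteq\supp(\Phi(S_\lambda))\in\mathcal{F}$ and we are done. Writing $\Phi(T_j)=(U\xi_j)\odot(U\eta_j)$, where $U$ spatially implements $\Phi$, orthonormality of $(U\xi_j)_{j\in J}$ combined with Bessel's inequality shows that for each pair $(y,y')$ at most countably many $j$'s contribute to $(\Phi(S_\lambda))_{yy'}=\sum_j\lambda_j(U\xi_j)_y\overline{(U\eta_j)_{y'}}$, so the ``bad'' set $B_{y,y'}:=\{\lambda:(\Phi(S_\lambda))_{yy'}=0\}$ is closed and nowhere dense in $\T^J$ whenever $(y,y')\in\bigcup_j\supp(\Phi(T_j))$.

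The hard part will be producing such a $\lambda$ when $\bigcup_j\supp(\Phi(T_j))$ is uncountable; when this union is countable, a standard Baire-category (or Haar-measure) argument on the compact group $\T^J$ finds $\lambda$ avoiding all countably many $B_{y,y'}$. To handle the general case, I would first apply Proposition \ref{partition} to a symmetric entourage containing $E\cup E^{-1}\cup(E\circ E^{-1})\cup(E^{-1}\circ E)$, partitioning $X$ into finitely many pieces in which each of $\supp(\xi_j)$ and $\supp(\eta_j)$ meets any piece in at most one point (since any two points of $\supp(\xi_j)$ are $(E\circ E^{-1})$-related via a common element of $\supp(\eta_j)$). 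Decomposing each $T_j$ into at most $n^2$ matrix-unit summands $\alpha_{j,i,i'}e_{x_{i,j}y_{i',j}}$ and regrouping into finitely many subfamilies, one can either verify directly that the relevant union is countable within each subfamily, using distinctness constraints on matrix-unit indices forced by orthonormality, or extract a countable cofinal witness by selecting, for each generator $F$ of $\mathcal{F}$, an index $j_F$ and a pair $(y_F,y'_F)\in\supp(\Phi(T_{j_F}))\setminus F$, thereby reducing to the countable case.
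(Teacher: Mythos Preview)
Your argument through the countable case is sound: the SOT convergence and boundedness of $S_\lambda$ are correctly established, and your Baire-category (or Haar-measure) argument does produce a non-cancelling $\lambda\in\T^J$ whenever $\bigcup_j\supp(\Phi(T_j))$ is countable. This is a legitimate alternative to the paper's Claim~1, which instead builds an explicit $\lambda$ with rapidly decreasing positive entries so that at each pair $(y,y')$ the first nonzero term dominates the tail and cannot be cancelled.

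The genuine gap is in your last paragraph. When $J$ is uncountable, the union $\bigcup_j\supp(\Phi(T_j))$ is typically uncountable (each $\supp(\Phi(T_j))$ is finite and nonempty), so you face uncountably many nowhere-dense sets $B_{y,y'}$, and neither Baire category nor Haar measure lets you avoid them all. Neither of your proposed remedies closes this. The matrix-unit decomposition gives at most $n^2$ subfamilies with controlled supports in $X$, but it does nothing to control supports on the $Y$ side, and the resulting subfamilies need not have mutually orthogonal images (distinct orthonormal $\xi_j,\xi_{j'}$ may well share a point of $X_i$ in their supports), so you cannot feed them back into your argument. Your cofinal-witness extraction tacitly assumes that $\mathcal F$ has a \emph{countable} generating family; but the lemma is stated and used for arbitrary uniformly locally finite coarse spaces, with no metrizability or smallness hypothesis on $(Y,\mathcal F)$. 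Avoiding any such hypothesis is exactly the point of Section~\ref{SectionIsoAlgebraicRoeAlg} as opposed to Section~\ref{SectionIsoRoeAlg}.

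The paper bypasses the obstacle by a different mechanism. It declares $j\sim' j'$ when $\supp(T_j)\cap\supp(T_{j'})\neq\emptyset$ \emph{or} $\supp(\Phi(T_j))\cap\supp(\Phi(T_{j'}))\neq\emptyset$, and lets $\sim$ be the generated equivalence relation. Using Lemma~\ref{LemmaFiniteSupp} together with pigeonhole arguments carried out on \emph{both} the $X$ side and the $Y$ side, each $\sim$-class $J_i$ is shown to be countable. The countable case then produces $T^{(i)}\in\cstu[X]$ with $\supp(\Phi(T^{(i)}))=\bigcup_{j\in J_i}\supp(\Phi(T_j))$, and by construction the sets $\supp(T^{(i)})$ \emph{and} the sets $\supp(\Phi(T^{(i)}))$ are pairwise disjoint across $i$. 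Hence $\sum_i T^{(i)}\in\cstu[X]$ (after rescaling) and $\supp\big(\Phi(\sum_i T^{(i)})\big)=\bigcup_i\supp(\Phi(T^{(i)}))=\bigcup_j\supp(\Phi(T_j))$. Disjointness of supports on the $Y$ side replaces any need for genericity at the uncountable level; this is the idea your proposal is missing.
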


\begin{proof}
The proof consists of a series of claims. Throughout this proof,  fix $E\in\mathcal{E}$. 

\begin{claim} \label{Claim111}  Suppose $(T_j)_{j\in \N} $ is a bounded family of rank $1$ operators in $\cstu[X]$ such that $\supp(T_j)\subset E$, for all $j\in \N$.  Then, there exists an operator $T\in\cstu[X]$ such that $\supp(T)\subset E$ 
and  
\[
\supp(\Phi(T))=\bigcup_{j\in \N}\supp(\Phi(T_j)).
\]
In particular, 
\[
\bigcup_{j\in \N}\supp(\Phi(T_j))\in \mathcal{F}.
\]  
\end{claim} 

\begin{proof} 
For each $j\in \N$, let 
\[
m_j\coloneqq \inf\Big\{|\langle \Phi(T_j)\delta_y,\delta_{y'}    \rangle| \mid  (y,y')\in \supp(\Phi(T_j))\Big\}.
\]
By Corollary \ref{LemmaRank} and  Lemma \ref{LemmaFiniteSupp},  $\supp(\Phi(T_j))$ is finite, so  $m_j>0$, for all $j\in \N$. Let $M=\sup_{j\in\N}\|T_j\|$. Define a sequence $(\lambda_j)_{j\in \N}$ of positive reals as follows. Let $\lambda_1=1$ and assume that $\lambda_{j-1}$ had been defined for $j\geq 2$. Let $\lambda_j$ be a positive real smaller than \[\frac{\min\{m_1,\ldots, m_{j-1}\}}{ 2^{j}M}.\]
Clearly, $\sum_j|\langle \lambda_jT_j\delta_x,\delta_{x'}\rangle|\leq M$, for all $x,x'\in X$. Also, $\langle \lambda_jT_j\delta_x,\delta_{x'}\rangle=0$, for all $(x,x')\not\in E$. Hence, by the definition of $\cstu[X]$, it follows that $T\coloneqq \sum_{j\in\N}\lambda_jT_j\in \cstu[X]$ is well-defined.  So, $\Phi(T)\in \cstu[Y]$ is defined and $\supp(\Phi(T))\in\mathcal{F}$. At last, notice that the sequence  $(\lambda_j)_j$ was chosen such that 
\[\supp(\Phi(T))=\bigcup_{j\in \N}\supp(\Phi(T_j)).\]
This completes the proof. 
\end{proof} 

\begin{claim} \label{Claim222}   Let  $(T_j)_{j\in J} $ be an infinite family of nonzero operators in $B(\ell_2(X))$  with mutually orthogonal images. There exists an infinite $J'\subset J$ and a family $(x_j,x'_j)_{j\in J'}$ of pairwise distinct elements  in $X\times X$ 
such that $(x_j,x'_j)\in \supp(T_j)$, for all $j\in J'$. 
\end{claim} 
\begin{proof} 
Since the operators $(T_j)_{j\in J} $ have mutually orthogonal images, it is clear that, for all infinite $J_0\subset J$,  there is no finite  $A\subset X\times X$ such that $\supp(T_j)\subset A$, for all $j\in J_0$.  We now construct $J'\subset J$ and the required  family $(x_j,x'_j)_{j\in J'}$ by induction. Pick any $j_1\in J$ and any $(x_{j_1}, x_{j_1}')\in \supp(T_{j_1})$. Let $n\geq 1$ and assume that $j_k$ and $(x_{j_k}, x_{j_k}')$ have already been picked, for all $k\in\{1,\ldots,n\}$. Let \[A=\bigcup_{k=1}^n \{(x_{j_k}, x_{j_k}')\}\ \ \text{and}\ \ J_0=J\setminus \{j_1,\ldots,j_n\}.\]
Since $A$ is finite and $J_0$ is infinite, there exists $j_{k+1}\in J_0$ such that $\supp(T_j)\not\subset A$. Now pick $(x_{j_{k+1}}, x_{j_{k+1}}')\in \supp(T_j)\setminus A$. This proves the claim. 
\end{proof} 

\begin{claim} \label{Claim333}   Let $A\subset X\times X$ be finite and let $(T_j)_{j\in J} $ be an infinite family of rank $1$ operators in $B(\ell_2(X))$  with mutually orthogonal images  such that $ A\cap \supp(T_j)\neq 0$, for all $j\in J$. Then, there exist $x\in X$, an  infinite  $J'\subset J$ and  a family   $(x_j)_{j\in J'}$ in $X$ of pairwise distinct elements such that either 
\begin{enumerate}[(i)]
\item $(x,x_j)\in \supp(T_j)$, for all $j\in J'$, or 
\item $(x_j,x)\in \supp(T_j)$, for all $j\in J'$.
\end{enumerate}
\end{claim} 

\begin{proof} 
Let  $A\subset X\times X$ and  $(T_j)_{j\in J} $ be as above. Since $A$ is finite, a simple pigeonhole argument gives us $x,x'\in X$ and an infinite $J_0\subset J$  such that $(x,x')\in \supp(T_j)$, for all $j\in J_0$. By Claim~\ref{Claim222}, pick  a countably  infinite  $J_1\subset J_0$ and a family $(x_j,x_j')_{j\in J_1}$ of distinct elements such that $(x_j,x'_j)\in \supp(T_j)$, for all $j\in J_1$. Since $J_1$ is infinite, there exists an infinite  $J'\subset J_1$ such that either $(x_j)_{j\in J'}$ or $(x'_j)_{j\in J' }$ is a sequence of pairwise distinct elements. Assume that $(x_j)_{j\in J'}$ is a sequence of pairwise distinct elements. 

As the operators $(T_j)_{j\in J'} $ have rank $1$, so do their adjoints $(T^*_j)_{j\in J'}$. Since $(x,x'),(x_j,x'_j)\in \supp(T_j)$, for all $j\in J'$, we have that  $T^*\delta_{x'}\neq 0$ and $T^*_j\delta_{x'_j}\neq 0$, for all $j\in J' $. Hence, for all $j\in J'$, there exists $\lambda_j\neq 0$ such that $T^*_j\delta_{x'}=\lambda_j T_j^*\delta_{x'_j}$. It follows that 
\[\langle T_j\delta_{x_j},\delta_{x'}\rangle=\langle \delta_{x_j},T^*_j\delta_{x'}\rangle=\langle \delta_{x_j},\lambda_j T_j^*\delta_{x'_j}\rangle=\overline{\lambda}_j\langle T_j\delta_{x_j},\delta_{x'_j}\rangle\neq 0,\]
for all $j\in J' $. So, $(x_j,x')\in\supp(T_j)$, for all $j\in J' $, and (ii) holds. If we had assumed that  $(x'_j)_{j\in J'}$ is a sequence of distinct elements, similar arguments would give us that (i) holds. 
\end{proof}

\begin{claim}\label{Claim444}   Let $A\subset X\times X$ be finite and let $(T_j)_{j\in J} $ be a family of rank $1$ operators in $\cstu[X]$  with mutually orthogonal images and such that $\supp(T_j)\subset E$, for all $j\in J$. Then,  
\[
\Big|\{j\in J\mid A\cap \supp(T_j)\neq \emptyset\}\Big|<\infty.
\] 
\end{claim}

\begin{proof} 
Suppose the claim does not hold and let $x\in X$, $J'\subset J$ and  $(x_j)_{j\in J'}$ be given by Claim~\ref{Claim333}. Without loss of generality,  assume that $(x,x_j)\in \supp(T_j)$, for all $j\in J'$. Hence, as $\supp(T_j)\subset E$, for all $j\in J'$, it follows that 
\[\{x_j\mid j\in J' \}\subset E_x.\]
Since $(x_j)_{j\in J'}$ is an infinite sequence of distinct elements, this shows that $|E_x|=\infty$, which contradicts the fact that $(X,\mathcal{E})$ is uniformly locally finite.
\end{proof} 

\begin{claim}\label{Claim555}   Let  $B\subset Y\times Y$ be finite and let $(T_j)_{j\in J} $ be a bounded family of rank $1$ operators in $\cstu[X]$  with mutually orthogonal images and such that $\supp(T_j)\subset E$, for all $j\in J$. 
Then,  
\[
\Big|\{j\in J\mid B\cap \supp(\Phi(T_j))\neq \emptyset\}\Big|<\infty.
\] 
\end{claim} 

\begin{proof} Assume the claim does not hold. By Corollary \ref{LemmaRank}, $(\Phi(T_j))_{j\in J}$ is a  bounded family of rank $1$ operators in $\cstu[Y]$  with mutually orthogonal images. Let $y\in Y$, $J'\subset J$ and  $(y_j)_{j\in J'}$ be given by Claim~\ref{Claim333} applied to $(\Phi(T_j))_{j\in J}$. Without loss of generality,  assume that $J'$ is countable and that  $(y,y_j)\in \supp(\Phi(T_j))$, for all $j\in J'$. Since $J'$ is countable,  Claim~\ref{Claim111} gives us that
\[\{(y,y_j)\mid j\in J'\}\subset \bigcup_{j\in J'}\supp(\Phi(T_j))\in\mathcal{F}.\]
Since the elements $(y_j)_{j\in J'} $ are pairwise distinct and $(Y,\mathcal{F})$ is uniformly locally finite, this gives us a contradiction.
\end{proof}

We can finally finish the proof of the lemma. For now on, we fix a bounded family $(T_j)_{j\in J}$ of rank $1$ operators in $\cstu[X]$ with mutually orthogonal images.  Define an equivalence relation $\sim$ on $J$ as follows. First, define a relation  (not necessarily an equivalence relation) $\sim'$ on $J$ by saying that $j\sim' j'$  either
\[
\supp(T_{j})\cap \supp(T_{j'})\neq \emptyset\ \ \text{or}\ \ \supp(\Phi(T_{j}))\cap \supp(\Phi(T_{j'}))\neq \emptyset.
\]
Then, say that $j\sim j'$ if  there exist $n\in\N$ and $j_1,\ldots,j_n\in J$, with $j_1=j$ and $j_n=j'$, such that $j_k\sim'j_{k+1}$, for all $k\in\{1,\ldots,k-1\}$. This defines a  partition on $J$, say $J=\bigsqcup_{i\in I}J_i$, for some index set $I$. By the definition of $\sim$, we have that 
  \[
  \supp(T_j)\cap \supp(T_{j'})= \emptyset\ \ \text{and}\ \ \supp(\Phi(T_j))\cap \supp(\Phi(T_{j'}))= \emptyset,
  \]
for all $j,j'\in J$ such that $j\not\sim j'$. 

By Lemma \ref{LemmaFiniteSupp}, Claim~\ref{Claim444} and Claim~\ref{Claim555},  $J_i$ is countable, for all $i\in I$. Therefore,  by Claim~\ref{Claim111} there exists an operator $T^{(i)}\in\cstu[X]$ such that $\supp(T^{(i)})\subset E$ and  
\[\supp(\Phi(T^{(i)}))=\bigcup_{j\in J_i}\supp(\Phi(T_j)).\]
By multiplying $T^{(i)}$ by an appropriate scalar if necessary,  assume that $|\langle T^{(i)}\delta_x,\delta_{x'}\rangle|\leq 1$, for all $x,x'\in X$. Since $\supp(T_j)\cap \supp(T_{j'})=\emptyset$, for all $j\not\sim j'$, it follows that $\supp(T^{(i)})\cap \supp(T^{(i')})=\emptyset$, for all $i\neq i'$. This shows that $\sum_{i\in I}T^{(i)}$ is a well-defined element of $\cstu[X]$ with support contained in $E$. 

By Lemma \ref{LemmaSpakulaWillett}, $\Phi: \cstu[X]\to \cstu[Y]$  is continuous in the strong operator topology. By Lemma \ref{LemmaSOTConv}, the series $\sum_{i\in I}T^{(i)}$ is convergent in the strong operator topology, so  
\[\Phi\Big(\sum_{i\in I}T^{(i)}\Big)=\sum_{i\in I}\Phi(T^{(i)}).\]
Since  \[\supp(\Phi(T_j))\cap \supp(\Phi(T_{j'}))= 0,\]
for all $j\not\sim j'$, it follows that  $\supp(\Phi(T^{(i)}))\cap \supp(\Phi(T^{(i')}))=\emptyset$, for all $i\neq i'$. Therefore, 
\[\supp\Big(\sum_{i\in I}\Phi(T^{(i)})\Big)=\bigcup_{i\in I} \supp\big(\Phi(T^{(i)})\big)=\bigcup_{j\in J}\supp\big(\Phi(T_j)\big).\]
As $\supp(\Phi(\sum_{i\in I}T^{(i)}))\in\mathcal{F}$, this completes the proof.
\end{proof}

We can now prove that a    isomorphism $\Phi:\cstu[X]\to \cstu[Y]$ between the algebraic uniform Roe algebras of uniformly locally finite coarse spaces must satisfy a `coarse-like' property (cf. Theorem~\ref{T.Approximation}). 

\begin{thm}\label{ThmAlgebraicMain}
Let $(X,\mathcal{E})$ and $(Y,\mathcal{F})$ be uniformly locally finite coarse spaces and let   $\Phi:\cstu[X]\to\cstu[Y]$ be an    isomorphism. For all  $E\in \mathcal{E}$, there exists $F\in \mathcal{F}$ such that, for all $T\in \cstu[X]$,
\[\supp(T)\subset E\ \ \text{implies}\ \ \supp(\Phi(T))\subset F.\]
\end{thm}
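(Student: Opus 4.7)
The plan is to reduce the problem to a finite number of applications of Lemma~\ref{LemmaAlgebraicMain}, using the matrix unit decomposition of $T$ together with spatial implementation of $\Phi$ to propagate support control from the rank-$1$ pieces to $T$ itself.

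Fix $E\in\mathcal{E}$ and, without loss of generality, assume $E$ is symmetric and $\Delta_X\subseteq E$. Let $N=\sup_{x\in X}|E_x|$, which is finite by uniform local finiteness. First I would partition $E$ into finitely many pieces $E=E_1\sqcup\cdots\sqcup E_N$ so that within each $E_k$ the first coordinate determines the pair uniquely, i.e.\ $(x,x'),(x,x'')\in E_k$ implies $x'=x''$. Such a partition is obtained by well-ordering each finite fiber $E_x$ and placing $(x,x')$ into $E_k$ whenever $x'$ is the $k$-th element of $E_x$. Now for each fixed $k$, the family $\{e_{xx'}\mid (x,x')\in E_k\}$ is uniformly norm-bounded (by $1$), each $e_{xx'}$ has rank~$1$ with support contained in $E$, and their ranges $\C\delta_x$ are mutually orthogonal (because first coordinates are distinct within $E_k$). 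Lemma~\ref{LemmaAlgebraicMain} then yields
\[
F_k\coloneqq \bigcup_{(x,x')\in E_k}\supp(\Phi(e_{xx'}))\in\mathcal{F},
\]
and setting $F=F_1\cup\cdots\cup F_N\in\mathcal{F}$ gives a single entourage into which every $\Phi(e_{xx'})$ with $(x,x')\in E$ has support contained.

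Now let $T\in\cstu[X]$ with $\supp(T)\subseteq E$. Writing $T_{xx'}=e_{xx}Te_{x'x'}=\langle T\delta_{x'},\delta_x\rangle e_{xx'}$, the family $(T_{xx'})_{(x,x')\in\supp(T)}$ is uniformly bounded (by $\|T\|$), has pairwise disjoint supports, and the union of its supports is contained in $E$. By Lemma~\ref{LemmaSOTConv}, the net of finite partial sums converges in the strong operator topology to $T$. By Lemma~\ref{LemmaSpakulaWillett}, $\Phi$ is spatially implemented by some unitary $U$, so $\Phi(\cdot)=U\cdot U^*$ is strong-operator continuous on norm-bounded sets; hence $\Phi(T)=\sum_{(x,x')\in\supp(T)}\Phi(T_{xx'})$ as a SOT limit. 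Each summand $\Phi(T_{xx'})$ is a scalar multiple of $\Phi(e_{xx'})$, so its support is contained in $F$. Since SOT limits preserve the vanishing of individual matrix entries, $\supp(\Phi(T))\subseteq F$, which is the desired conclusion.

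The main technical obstacle I would expect is the SOT-convergence step in the last paragraph: one needs $\Phi$ to respect the strong operator limit of the decomposition $\sum T_{xx'}$, which is not automatic for a purely algebraic $*$-isomorphism between $\cstu[X]$ and $\cstu[Y]$. This is precisely why spatial implementation (Lemma~\ref{LemmaSpakulaWillett}) is essential, together with the uniform boundedness of the partial sums guaranteed by the uniformly locally finite hypothesis via Lemma~\ref{LemmaSOTConv}. The partitioning step and Lemma~\ref{LemmaAlgebraicMain} are what convert the a priori one-parameter-family statement into a single entourage $F$.
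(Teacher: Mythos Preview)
Your proof is correct, modulo a harmless indexing slip: the matrix-unit decomposition should read $T=\sum_{(x,x')\in E}T_{xx'}$ (or sum over those $(x,x')$ with $(x',x)\in\supp(T)$), since $T_{xx'}\neq 0$ iff $(x',x)\in\supp(T)$, and $\supp(T)$ need not be symmetric. With that fix, everything goes through as you describe.

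The approach, however, is genuinely different from the paper's. The paper argues by contradiction: assuming no single $F$ works, for each $F\in\mathcal F$ it picks a bad $T^F$ with $\supp(T^F)\subset E$ and a witness $(y_1^F,y_2^F)\in\supp(\Phi(T^F))\setminus F$, uses the SOT decomposition to locate a single pair $(x_1^F,x_2^F)\in E$ responsible for that witness, then invokes the net-refinement Lemma~\ref{LemmaNetsCoarseStructure} to pass to a subfamily $J$ along which the $x_1^F$ (and $x_2^F$) are distinct, so that the $T^F_{x_1^Fx_2^F}$ have mutually orthogonal images; a single application of Lemma~\ref{LemmaAlgebraicMain} to this family then yields the contradiction. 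Your direct route instead partitions $E$ into $N=\sup_x|E_x|$ pieces by enumerating fibers and applies Lemma~\ref{LemmaAlgebraicMain} once per piece to the matrix units themselves, producing $F$ explicitly. This avoids Lemma~\ref{LemmaNetsCoarseStructure} altogether and makes the role of uniform local finiteness transparent; the paper's version has the minor advantage of reusing the net-refinement machinery already developed for the $\cstu(X)$ case in Section~\ref{SectionIsoRoeAlg}, keeping the two proofs parallel.
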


\begin{proof}
Suppose otherwise.  Then, there exists $E\in\mathcal{E}$ such that for all $F\in\mathcal{F}$ there exists $T^F\in \cstu[X]$ with $\supp(T^F)\subset E$ and $\supp(\Phi(T^F))\not\subset F$. Without loss of generality,  assume that $\Delta_X\subset E$. By multiplying $T^F$ by an appropriate scalar if necessary,  assume that $\|T^F\|\leq 1$, for all $F\in\mathcal{F}$. For each $F\in \mathcal{F}$, pick $(y^F_1,y^F_2)\in \supp(\Phi(T^F))$ such that $(y^F_1,y^F_2)\not\in F$. By Lemma \ref{LemmaSpakulaWillett}, $\Phi$ is continuous in the strong operator topology, therefore, using Lemma \ref{LemmaSOTConv}, it follows that \[\Phi(T^F)=\Phi\Big(\sum_{(x_1,x_2)\in E}T^F_{x_1x_2}\Big)=\sum_{(x_1,x_2)\in E}\Phi(T^F_{x_1x_2}).\]
So, for each $F\in\mathcal{F}$, there exists $(x^F_1,x^F_2)\in E$ such that $(y^F_1,y^F_2)\in\supp(\Phi(T^F_{x^F_1x^F_2}))$. 

We make  $\mathcal{F}$ into a directed set by setting $F_1\preceq F_2$ if $F_1\subset F_2$. By Lemma \ref{LemmaNetsCoarseStructure}, there exists a cofinal subset $I$ of $\mathcal{F}$, a subset $J$ of  $I$, and a map $\varphi:I\to J$ such that
\begin{enumerate}[(i)]
\item   $x^F_2\neq x^{F'}_2$, for all $F\neq F'$ in $J$, and
\item $x^F_1=x^{\varphi(F)}_1$ and $x^F_2=x^{\varphi(F)}_2$, for all $F\in I$.
\end{enumerate}

Notice that $\|T^F_{x^F_1x^F_2}\|\leq \|T^F\|\leq 1$. Hence, by Item (i),  $(T^F_{x^F_1x^F_2})_{F\in J}$ is a bounded family of rank $1$ operators in $\cstu[X]$ with mutually orthogonal images. Therefore, by Lemma \ref{LemmaAlgebraicMain}, it follows that 
\[F'\coloneqq\bigcup_{F\in J}\supp\Big(\Phi\big(T^F_{x^F_1x^F_2}\big)\Big)\in \mathcal{F}.\]
As $I$ is cofinal in $\mathcal{F}$, we can pick $F\in I$  such that $F'\subset F$. Fix such $F\in I$. By  our choice of $(y^F_1,y^F_2)$, we have that  $(y^{F}_1,y^{F}_2)\not\in F$. On the other hand, 
\[
(y^{F}_1,y^{F}_1)\in \supp\Big(\Phi\big(T^F_{x^F_1x^F_2}\big)\Big)=\supp\Big(\Phi\big(T^{\varphi(F)}_{x^{\varphi(F)}_1x_2^{\varphi(F)}}\big)\Big)\subset  F';
\]
 contradiction.
\end{proof}

We need to introduce some notation which will be used in the following
 lemmas. Let $(X,\mathcal{E})$ and $(X,\mathcal{F})$ be uniformly locally finite coarse spaces and let $U:\ell_2(X)\to\ell_2(Y)$ be a unitary isomorphism which spatially implements a    isomorphism between $\cstu[X]$ and $\cstu[Y]$. For each $x\in X$ and $y\in Y$,  define
\[X_y=\{x\in X\mid e_{xx}U^*e_{yy}U\neq 0\}\]
and 
\[Y_x=\{y\in Y\mid Ue_{xx}U^*e_{yy}\neq 0\}.\]

\begin{lemma}\label{LemmaDiamXY}
Let $(X,\mathcal{E})$ and $(X,\mathcal{F})$ be uniformly locally finite coarse spaces and let $U:\ell_2(X)\to\ell_2(Y)$ be a unitary isomorphism which spatially implements a    isomorphism between $\cstu[X]$ and $\cstu[Y]$. The following holds.

\begin{enumerate}[(i)]
\item There exists $E\in \mathcal{E}$ such that $(x,x')\in E$, for all $y\in Y$ and all $x,x'\in X_y$. In particular, $\sup_{y\in Y}|X_y|\leq \sup_{x\in X}|E_x|$.
\item There exists $F\in \mathcal{F}$ such that $(y,y')\in F$, for all $x\in X$ and all $y,y'\in Y_x$.  In particular, $\sup_{x\in X}|Y_x|\leq \sup_{y\in Y}|F_y|$.
\end{enumerate}
\end{lemma}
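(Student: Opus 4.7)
The plan is to prove (i) using Theorem~\ref{ThmAlgebraicMain} applied to the inverse isomorphism $\Phi^{-1}\colon \cstu[Y]\to\cstu[X]$ spatially implemented by $U^*$; part (ii) follows symmetrically by applying Theorem~\ref{ThmAlgebraicMain} to $\Phi$ directly.

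First I would reinterpret $X_y$ in a more transparent way. Writing $\xi_y=U^*\delta_y\in \ell_2(X)$, the operator $U^*e_{yy}U$ equals the rank-one projection $\xi_y\otimes \xi_y^*$, whose support is exactly $\supp(\xi_y)\times\supp(\xi_y)$. Computing $e_{xx}U^*e_{yy}U=(e_{xx}\xi_y)\otimes \xi_y^*$, this operator is nonzero if and only if $\langle \xi_y,\delta_x\rangle\neq 0$, i.e.\ $x\in\supp(\xi_y)$. Therefore $X_y=\supp(\xi_y)$ and $\supp(U^*e_{yy}U)=X_y\times X_y$.

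Next I would apply Theorem~\ref{ThmAlgebraicMain} to the isomorphism $\Phi^{-1}\colon\cstu[Y]\to\cstu[X]$ (spatially implemented by $U^*$; recall $\Phi^{-1}(T)=U^*TU$) with the entourage $\Delta_Y\in \cF$. The theorem produces a single $E\in\cE$ such that $\supp(U^*TU)\subseteq E$ whenever $\supp(T)\subseteq \Delta_Y$. Since $\supp(e_{yy})=\{(y,y)\}\subseteq\Delta_Y$ for each $y\in Y$, we obtain $X_y\times X_y=\supp(U^*e_{yy}U)\subseteq E$ for every $y\in Y$. This is precisely the statement that $(x,x')\in E$ for all $y\in Y$ and all $x,x'\in X_y$.

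For the quantitative bound, I would observe that $X_y\neq\emptyset$ because $U$ is unitary and hence $\xi_y=U^*\delta_y\neq 0$. Fixing any $x_0\in X_y$, the inclusion $\{x_0\}\times X_y\subseteq E$ gives $X_y\subseteq E_{x_0}$, whence $|X_y|\leq |E_{x_0}|\leq \sup_{x\in X}|E_x|$, which is finite by uniform local finiteness of $(X,\cE)$. Taking the supremum over $y$ yields $\sup_{y\in Y}|X_y|\leq \sup_{x\in X}|E_x|$. Part (ii) is proved identically with the roles of $X$ and $Y$ (and of $U$ and $U^*$) reversed: one applies Theorem~\ref{ThmAlgebraicMain} to $\Phi$ with $\Delta_X\in\cE$, uses that $Ue_{xx}U^*$ is the rank-one projection onto $U\delta_x$ with support $Y_x\times Y_x$, and concludes as above. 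There is no substantive obstacle in the argument, since Theorem~\ref{ThmAlgebraicMain} supplies exactly the uniformity needed; the only point requiring care is the bookkeeping that identifies $X_y$ and $Y_x$ with the supports of $U^*\delta_y$ and $U\delta_x$, respectively.
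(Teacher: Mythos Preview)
Your proof is correct and follows essentially the same approach as the paper: both apply Theorem~\ref{ThmAlgebraicMain} to the inverse isomorphism $T\mapsto U^*TU$ with the entourage $\Delta_Y$. Your execution is in fact slightly cleaner: by first identifying $X_y=\supp(U^*\delta_y)$ and hence $\supp(U^*e_{yy}U)=X_y\times X_y$ exactly, you conclude $(x,x')\in E$ directly, whereas the paper argues via an auxiliary point $x''$ to obtain $(x,x''),(x',x'')\in E$ and then passes to the larger entourage $E\circ E$.
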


\begin{proof}
By symmetry, we only need to prove (i). Let $E$ be given by Theorem \ref{ThmAlgebraicMain} applied to the    isomorphism $T\in \cstu[Y]\mapsto U^*TU \in\cstu[X]$ and $\Delta_Y\in \mathcal{F}$. So, 
\[\supp(U^*e_{yy}U)\subset E,\]
for all $y\in Y$. Without loss of generality,  assume that $E$ is symmetric. Let $y\in Y$ and $x,x'\in X_y$. Since $e_{xx}U^*e_{yy}U\neq 0$, there exists $x''\in X$ such that $e_{xx}U^*e_{yy}U\delta_{x''}\neq 0$. As $U^*e_{yy}U$ is a rank $1$ operator and  $e_{x'x'}U^*e_{yy}U\neq 0$, we have that $e_{x'x'}U^*e_{yy}U\delta_{x''}\neq 0$. In other words, this gives us that 
\[\langle U^*e_{yy}U\delta_{x},\delta_{x''}\rangle\neq 0\ \ \text{and}\ \ \langle U^*e_{yy}U\delta_{x'},\delta_{x''}\rangle\neq 0.\]
So, $(x,x''),(x',x'')\in\supp(U^*e_{yy}U)\subset E$. As $E$ is symmetric, this implies that $(x,x')\in E\circ E$. The entourage $E\circ E$ has the desired property.
\end{proof}

\begin{lemma}\label{LemmaCardXB}
Let $(X,\mathcal{E})$ and $(Y,\mathcal{F})$ be uniformly locally finite coarse spaces and let $U:\ell_2(X)\to\ell_2(Y)$ be a unitary isomorphism which spatially implements a    isomorphism between $\cstu[X]$ and $\cstu[Y]$. The following holds.
\begin{enumerate}[(i)]
\item $|\bigcup_{y\in B}X_y|\geq |B|$, for all $B\subset Y$.
\item $|\bigcup_{y\in B}Y_x|\geq |B|$, for all $B\subset X$.
\end{enumerate}
\end{lemma}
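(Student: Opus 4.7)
The plan is to unwind the definition and then apply a straightforward Hilbert space dimension argument. First, observe that
\[
e_{xx}U^*e_{yy}U\delta_{x'} = \langle U\delta_{x'},\delta_y\rangle\,\overline{\langle U\delta_x,\delta_y\rangle}\,\delta_x,
\]
so $e_{xx}U^*e_{yy}U\neq 0$ if and only if $\langle U\delta_x,\delta_y\rangle\neq 0$, i.e.\ if and only if $\langle \delta_x,U^*\delta_y\rangle\neq 0$. Hence
\[
X_y = \{x\in X\mid \langle \delta_x, U^*\delta_y\rangle\neq 0\} = \supp(U^*\delta_y),
\]
where we view $U^*\delta_y\in \ell_2(X)$. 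Symmetrically, $Y_x=\supp(U\delta_x)\subseteq Y$.

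For (i), fix $B\subseteq Y$ and set $Z=\bigcup_{y\in B}X_y$. For each $y\in B$, the computation above shows that $\supp(U^*\delta_y)\subseteq X_y\subseteq Z$, and so $U^*\delta_y\in \ell_2(Z)$. Since $U$ is unitary, the family $(U^*\delta_y)_{y\in B}$ is orthonormal. The Hilbert space $\ell_2(Z)$ admits an orthonormal basis of cardinality $|Z|$, and any orthonormal subset of a Hilbert space has cardinality at most the Hilbert space dimension. Therefore $|B|\leq |Z|$, which is exactly the desired inequality. Part (ii) follows in the same way by working with the orthonormal family $(U\delta_x)_{x\in B}$ in $\ell_2\bigl(\bigcup_{x\in B}Y_x\bigr)$.

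There is no real obstacle here: the lemma is essentially a bookkeeping consequence of the fact that an isometry cannot compress an orthonormal family into a subspace of strictly smaller dimension. The only minor subtlety is the cardinal comparison in the infinite case, which is handled by the fact that an orthonormal set in $\ell_2(Z)$ has cardinality at most $|Z|$ regardless of whether $Z$ is finite or infinite.
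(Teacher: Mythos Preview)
Your proof is correct and follows essentially the same approach as the paper: both arguments exhibit an orthogonal family indexed by $B$ inside $\ell_2\bigl(\bigcup_{y\in B}X_y\bigr)$ and invoke the dimension bound. Your version is marginally more direct, since you use the orthonormal family $(U^*\delta_y)_{y\in B}$ itself, whereas the paper works with the images of the rank-one projections $U^*e_{yy}U$ applied to suitably chosen vectors (which are just nonzero scalar multiples of your $U^*\delta_y$).
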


\begin{proof}
By symmetry, it suffices to prove (i). Fix $B\subset Y$ and let $X_B=\bigcup_{y\in B}X_y$. Since
 $\supp(U^*e_{yy}U)\subset X_y\times X_y$, for all $y\in Y$, it follows  that 
 \[
 \bigcup_{y\in B}\supp(U^*e_{yy}U)\subset X_B\times X_B.
 \]
  Therefore, for each $y\in B$, we can naturally identify $U^*e_{yy}U$ with an operator in $\ell_2(X_B)$. For each $y\in B$, let $x_y\in X_B$ be such that $U^*e_{yy}U\delta_{x_y}\neq 0$. Since the images of $U^*e_{yy}U$ and $U^*e_{y'y'}U $ are orthogonal for all $y\neq y'$ in $B$, we have that 
$(U^*e_{yy}U\delta_{x_y})_{y\in B}$ is an orthogonal family of nonzero vectors in $\ell_2(X_B)$. So $|B|\leq |X_B|$. 
\end{proof}

\begin{lemma}\label{LemmaInjection}
Let $(X,\mathcal{E})$ and $(X,\mathcal{F})$ be uniformly locally finite coarse spaces and let $U:\ell_2(X)\to\ell_2(Y)$ be a unitary isomorphism which spatially implements a    isomorphism between $\cstu[X]$ and $\cstu[Y]$. There exist injections $f:X\to Y$ and $g:Y\to X$ such that 
\begin{enumerate}[(i)]
\item $f(x)\in Y_x$, for all $x\in X$, and
\item $g(y)\in X_y$, for all $y\in Y$.
\end{enumerate}
\end{lemma}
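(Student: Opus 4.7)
The plan is to recognize the construction of $f$ (and symmetrically $g$) as the search for a system of distinct representatives for a family of finite sets, and then invoke the infinite version of Hall's marriage theorem. I will describe the argument for $f$; the construction of $g$ is identical after swapping the roles of $X$ and $Y$.

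Consider the family $(Y_x)_{x\in X}$ of subsets of $Y$. First I would verify that each $Y_x$ is a nonempty finite set. Finiteness follows directly from Lemma~\ref{LemmaDiamXY}(ii): there is an $F\in\mathcal{F}$ such that $Y_x\times Y_x\subset F$ for every $x$, and since $(Y,\mathcal{F})$ is uniformly locally finite, $|Y_x|\leq \sup_{y\in Y}|F_y|<\infty$. For non-emptiness, note that $Ue_{xx}U^*$ is the rank-one projection onto the span of the unit vector $U\delta_x$, so
\[
Y_x=\{y\in Y\mid \langle U\delta_x,\delta_y\rangle\neq 0\}\neq\emptyset.
\]

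Next I would verify Hall's matching condition for $(Y_x)_{x\in X}$: for every finite $B\subset X$,
\[
\Big|\bigcup_{x\in B}Y_x\Big|\geq |B|.
\]
But this is exactly (the finite case of) Lemma~\ref{LemmaCardXB}(ii).

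Having the two ingredients above, I would invoke the following classical theorem (due to M.~Hall, and provable e.g.\ by a compactness argument on $\prod_{x\in X}Y_x$ with the product topology of discrete spaces, or via Zorn's lemma on partial systems of distinct representatives): if $(A_i)_{i\in I}$ is a family of finite sets such that $|\bigcup_{i\in S}A_i|\geq |S|$ for every finite $S\subset I$, then there is an injection $h\colon I\to\bigcup_{i\in I}A_i$ with $h(i)\in A_i$ for every $i\in I$. Applying this to $(Y_x)_{x\in X}$ produces the required injection $f\colon X\to Y$ with $f(x)\in Y_x$ for all $x\in X$. The construction of $g$ is obtained symmetrically by applying the same theorem to the family $(X_y)_{y\in Y}$, using Lemma~\ref{LemmaDiamXY}(i) for finiteness and Lemma~\ref{LemmaCardXB}(i) for Hall's condition. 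No genuine obstacle remains once Hall's theorem is in hand; the only real content of the lemma is that the two previous lemmas provide exactly the finiteness and matching-deficiency bounds needed to apply it.
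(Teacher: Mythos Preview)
Your proposal is correct and follows essentially the same route as the paper: both arguments verify finiteness of the $Y_x$ via Lemma~\ref{LemmaDiamXY}(ii), check Hall's matching condition via Lemma~\ref{LemmaCardXB}(ii), and then invoke Hall's marriage theorem to produce the injection. Your explicit remark about non-emptiness and the observation that only the finite-subset case of Lemma~\ref{LemmaCardXB} is needed are minor clarifications that the paper leaves implicit.
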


\begin{proof}
By symmetry, it suffices to prove (i). Define $\varphi:X\to \mathcal{P}(Y)$ by letting $\varphi(x)=Y_x$, for all $x\in X$. By Lemma \ref{LemmaCardXB}, we know that 
\[|B|\leq \Big|\bigcup_{x\in B}Y_x\Big|=\Big|\bigcup_{x\in B}\varphi(x)\Big|,\]
for all $B\subset X$. By Lemma \ref{LemmaDiamXY}(ii), we also have that $Y_x$ is finite, for all $x\in X$. Therefore, by Hall's marriage theorem (see \cite{HallBook}, Theorem 5.1.2), there exists a map $\psi:\{Y_x\mid x\in X\}\to Y$ such that $\psi(Y_x)\in Y_x$, for all $x\in X$, and $\psi(Y_x)\neq \psi(Y_{x'})$, for all $x\neq x'$. Therefore, the map $f=\psi\circ \varphi$ is an injection such that $f(x)\in Y_x$, for all $x\in X$.
\end{proof}

\begin{thm}\label{ThmV}
Let $(X,\mathcal{E})$ and $(Y,\mathcal{F})$ be uniformly locally finite coarse spaces and suppose that $\Phi:\cstu[X]\to \cstu[Y]$ is a    isomorphism. Then, there exists a unitary operator $V\in \cstu[Y]$ such that $V\Phi(\ell_\infty(X))V^*=\ell_\infty$.
\end{thm}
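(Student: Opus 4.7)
The plan is to apply Hall's marriage theorem to produce a bijection $f\colon X\to Y$ with $f(x)\in Y_x$ for every $x\in X$, use it to define a unitary $V\in \cstu[Y]$ which conjugates each rank-one projection $\Phi(e_{xx})$ onto the diagonal projection $e_{f(x)f(x)}$, and then pass from the span of the $e_{xx}$'s to all of $\ell_\infty(X)$ by an SOT-continuity argument.

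By Lemma~\ref{LemmaSpakulaWillett} we may write $\Phi=\Ad U$ for a unitary $U\colon \ell_2(X)\to \ell_2(Y)$; set $\xi_x\coloneqq U\delta_x$, so that $(\xi_x)_{x\in X}$ is an orthonormal basis of $\ell_2(Y)$ and $\Phi(e_{xx})=\xi_x\odot \xi_x$ is the rank one projection onto $\C\xi_x$. The set $Y_x$ introduced above is precisely the support of $\xi_x$, which is finite by Lemma~\ref{LemmaDiamXY}(ii); the same lemma produces $F_0\in\cF$ with $Y_x\times Y_x\subseteq F_0$ for every $x\in X$. Now consider the bipartite graph $G$ on $X\sqcup Y$ whose edges are the pairs $\{x,y\}$ with $y\in Y_x$ (equivalently, $x\in X_y$). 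By Lemma~\ref{LemmaDiamXY} the graph $G$ is locally finite, and by Lemma~\ref{LemmaCardXB} Hall's condition $|N(S)|\geq |S|$ holds for every finite $S$ on both sides of the bipartition. The two-sided marriage theorem for locally finite bipartite graphs then yields a perfect matching, i.e., a bijection $f\colon X\to Y$ with $f(x)\in Y_x$ for all $x\in X$.

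Define $V\in B(\ell_2(Y))$ by prescribing $V\xi_x=\delta_{f(x)}$ and extending linearly; since both $(\xi_x)_{x\in X}$ and $(\delta_{f(x)})_{x\in X}=(\delta_y)_{y\in Y}$ are orthonormal bases of $\ell_2(Y)$, the operator $V$ is unitary. To see that $V\in \cstu[Y]$, expand $\delta_y$ in the basis $(\xi_x)_x$ to obtain
\[
\langle V\delta_y,\delta_{y'}\rangle=\sum_{x\in X}\overline{\langle \xi_x,\delta_y\rangle}\,\langle \delta_{f(x)},\delta_{y'}\rangle=\overline{\langle \xi_{f^{-1}(y')},\delta_y\rangle},
\]
which is nonzero only if both $y$ and $y'=f(f^{-1}(y'))$ lie in $Y_{f^{-1}(y')}=\supp(\xi_{f^{-1}(y')})$; hence $\supp(V)\subseteq F_0\in \cF$.

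Finally, $V\Phi(e_{xx})V^*=(V\xi_x)\odot(V\xi_x)=e_{f(x)f(x)}$ for every $x$. The map $T\mapsto V\Phi(T)V^*$ is SOT-continuous on bounded sets, so for $a\in\ell_\infty(X)$, writing $a$ as the SOT-limit of its finite truncations $\sum_{x\in A}a(x)e_{xx}$ yields $V\Phi(a)V^*=\sum_{y\in Y}a(f^{-1}(y))e_{yy}\in\ell_\infty(Y)$; as $f$ is a bijection, $a\mapsto a\circ f^{-1}$ maps $\ell_\infty(X)$ onto $\ell_\infty(Y)$, which delivers the required equality. The main obstacle is the third step: upgrading the two injections supplied by Lemma~\ref{LemmaInjection} to a single bijection that still respects the assignment $x\mapsto Y_x$. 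This requires the two-sided form of the marriage theorem, which is available precisely because $G$ is locally finite and Lemma~\ref{LemmaCardXB} yields Hall's condition on both sides simultaneously.
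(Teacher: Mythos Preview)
Your argument is correct and follows the same overall strategy as the paper: produce a bijection $f\colon X\to Y$ with $f(x)\in Y_x$, define $V$ by $V\xi_x=\delta_{f(x)}$, and check that $\supp(V)$ lies in the entourage supplied by Lemma~\ref{LemmaDiamXY}(ii). The one difference is in how the bijection is obtained: the paper first applies the one-sided Hall theorem twice (this is Lemma~\ref{LemmaInjection}) to get injections $f\colon X\to Y$ and $g\colon Y\to X$ with $f(x)\in Y_x$ and $g(y)\in X_y$, and then runs K\"onig's proof of Cantor--Schr\"oder--Bernstein to splice them into a single bijection, verifying afterwards (Claim~1 there) that the spliced map still has the required property $Y_{h(y)}\subset F_y$. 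You instead invoke the two-sided marriage theorem for locally finite bipartite graphs directly; this is cleaner, and it is exactly what the paper's K\"onig argument is proving in situ (note that $x\in X_y$ iff $y\in Y_x$, so the bipartite graph is the same from either side). Both routes arrive at the same $V$ and the same support bound.
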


 \begin{proof}
 Let $f:X\to Y$ and $g:Y\to X$ be the injections given by Lemma \ref{LemmaInjection}. Using K\"{o}nig's proof of the Cantor-Scr\"{o}der-Bernstein theorem to the injections $f$ and $g$, we obtain a bijection $h:Y\to X$ such that, for all $y\in Y$, we  have that either $h(y)=g(y)$ or  $y\in \text{Im}(f)$  and $h(y)=f^{-1}(y)$.
 
Let $F\in\mathcal{F}$  be given by Lemma \ref{LemmaDiamXY}(ii). Without loss of generality,  assume that $F$ is symmetric. So, $(y,y')\in F$, for all $x\in X$ and all $y,y'\in Y_x$. 

\begin{claim}\label{Claim1x} $Y_{h(y)}\subset F_y$, for all $y\in Y$.
\end{claim}

\begin{proof} 
Fix $y\in Y$. Suppose $y\in \text{Im}(f)$ and $h(y)=f^{-1}(y)$. By the definition of~$f$, we have that  $y=f(h(y))\in Y_{h(y)}$. Therefore, $(y,y')\in F$, for all $y'\in Y_{h(y)}$. So, $Y_{h(y)}\subset F_y$. Suppose now that $h(y)=g(y)$. By the definition of $g$, $g(y)\in X_y$. Hence, by the definition of $X_y$, $e_{g(y)g(y)}U^*e_{yy}U\neq 0$ and we must have that $Ue_{g(y)g(y)}U^*e_{yy}\neq 0$. Therefore, by the definition of $Y_{h(y)}$, it follows that $y\in Y_{h(y)}$ and we conclude that $Y_{h(y)}\subset F_y$. This finishes the proof of the claim.
\end{proof} 

For each $x\in X$, let $\xi_x=U\delta_x$. So $(\xi_x)_{y\in X}$ is an orthonormal basis of $\ell_2(Y)$. We define a unitary operator $V\in B(\ell_2(Y))$ by letting
\[V\xi=\sum_{x\in X}\langle\xi,\xi_x\rangle \delta_{h^{-1}(x)},\]
for all $\xi\in \ell_2(Y)$. 

\begin{claim}\label{Claim2x} $\supp(V) \subset F$. In particular, $V\in \cstu[Y]$.
\end{claim} 

\begin{proof} 
Let $y,y'\in Y$. Then, by the definition for $V$, we have that
\begin{align*}
\big|\langle V\delta_y,\delta_{y'}\rangle\big|&=\Big|\Big\langle\sum_{x\in X}\langle\delta_y,\xi_x\rangle \delta_{h^{-1}(x)},\delta_{y'}\Big\rangle\Big|\\
& =|\langle\delta_y, \xi_{h(y')}\rangle|\\
&=|\langle \delta_y,U\delta_{h(y')}\rangle|\\
&=|\langle U^*\delta_y,\delta_{h(y')}\rangle|\\
&=\|e_{h(y')h(y')}U^*e_{yy}\|\\
&= \|Ue_{h(y')h(y')}U^*e_{yy}\|
\end{align*}
By Claim~\ref{Claim1x},  $Y_{h(y')}\subset F_{y'}$. Hence,  $(y',y)\not\in F$ implies $y\not\in Y_{h(y')}$. By the definition of $Y_{h(y')}$, $y\not\in Y_{h(y')}$ implies $Ue_{h(y')h(y')}U^*e_{yy}=0$. Since $F$ is symmetric, we conclude that $(y,y')\not\in F$ implies $\langle V\delta_y,\delta_{y'}\rangle=0$. Therefore, $\supp(V)\subset F$ and the claim is proven.
\end{proof} 

In order to finish the proof,  notice that  $VU\ell_\infty(X)U^*V^*=\ell_\infty (X)$. Indeed, a simple computation gives us   that
\[V^*\xi=\sum_{x\in X}\langle \xi,\delta_{h^{-1}(x)}\rangle\xi_x,\]
for all $\xi\in \ell_2(Y)$. Hence, for any $x\in X$, we have that 
\[VUe_{xx}U^*V^*\xi=\langle \xi,\delta_{h^{-1}(x)}\rangle\delta_{h^{-1}(x)}=e_{h^{-1}(x)h^{-1}(x)}\xi,\]
for all $\xi\in \ell_2(Y)$. So, $VUe_{xx}U^*V^*=e_{h^{-1}(x)h^{-1}(x)}$. Since $h$ is a bijection, we conclude that  $VU\ell_\infty(X)U^*V^*=\ell_\infty (X)$.  This finishes the proof.
 \end{proof}

\begin{proof}[Proof of (ii)$\Rightarrow$(iii) of Theorem \ref{T.ThmMAIN}]
Let $\Phi:\cstu[X]\to \cstu[Y]$ be a    isomorphism and let $V$ be given by Theorem \ref{ThmV}. Define a    isomorphism $\Psi:\cstu[X]\to \cstu[Y]$ by letting  $\Psi(T)=V\Phi(T)V^*$, for all $T\in \cstu[X]$. Clearly, $\Psi(\ell_\infty(X))\subset \ell_\infty(Y)$ and $\Psi$ extends to a    isomorphism $\cstu(X)\to \cstu(Y)$, so the proof is complete. 
\end{proof}

We now turn to the proof of (iii) $\Rightarrow$ (i) of Theorem \ref{T.ThmMAIN}. We show that a    isomorphism  $\Phi:\cstu(X)\to \cstu(Y)$ sending $\ell_\infty(X)$ to $\ell_\infty(Y)$ must satisfy the same `coarse-like' property of Theorem \ref{ThmAlgebraicMain} above (see Theorem \ref{ThmMainSpecialCase} below). In particular, we obtain that  $\Phi(\cstu[X])\subset \cstu[Y]$.

\begin{lemma}\label{LemmaBij}
Let $(X,\mathcal{E})$ and $(Y,\mathcal{F})$ be uniformly locally finite coarse spaces and consider a    isomorphism $\Phi:\cstu(X)\to \cstu(Y)$ such that $\Phi(\ell_\infty(X))\subset\ell_\infty(Y)$. There exists a bijection $f:X\to Y$ such that  

\begin{enumerate}[(i)]
\item $\Phi(e_{xx})=e_{f(x)f(x)}$, for all $x\in X$, and 
\item for all $x,x'\in X$ there exists $\lambda_{x'x}\in\C$, with $|\lambda_{x'x}|=1$, such that $\Phi(e_{x'x})=\lambda_{x'x} e_{f(x')f(x)}$.
\end{enumerate}
In particular $\Phi^{-1}(\ell_\infty(Y))\subset\ell_\infty(X)$.
\end{lemma}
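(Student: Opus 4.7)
The plan is to use the fact, established in Lemma~\ref{LemmaSpakulaWillett}, that $\Phi$ is spatially implemented by a unitary $U\colon \ell_2(X)\to\ell_2(Y)$, together with Corollary~\ref{LemmaRank}, which guarantees that $\Phi$ preserves rank and orthogonality of images.

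First, for each $x\in X$, the operator $\Phi(e_{xx})$ is a rank-one projection by Corollary~\ref{LemmaRank}, and by hypothesis it belongs to $\ell_\infty(Y)$. Since the only rank-one projections inside the diagonal masa $\ell_\infty(Y)$ are the operators of the form $e_{yy}$ with $y\in Y$, there is a unique $f(x)\in Y$ with $\Phi(e_{xx})=e_{f(x)f(x)}$, which establishes~(i). The map $f$ is injective because $\Phi$ is, so the only work left for the bijectivity of $f$ is surjectivity. For this, note that $\sum_{x\in X} e_{xx}$ converges to the identity of $B(\ell_2(X))$ in the strong operator topology; since $\Phi=\Ad U$ is SOT-continuous on bounded sets, it follows that $\sum_{x\in X} e_{f(x)f(x)}$ converges strongly to the identity of $B(\ell_2(Y))$. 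The projections $e_{f(x)f(x)}$ are pairwise orthogonal (this is preserved by $\Phi$), so each $e_{yy}$ must appear, i.e., $f$ is surjective onto $Y$.

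For~(ii), fix $x,x'\in X$ and use the relation $e_{x'x}=e_{x'x'}e_{x'x}e_{xx}$, which $\Phi$ preserves; hence
\[
\Phi(e_{x'x})=e_{f(x')f(x')}\Phi(e_{x'x})e_{f(x)f(x)},
\]
so $\Phi(e_{x'x})$ is supported on the single entry $(f(x'),f(x))$, i.e., $\Phi(e_{x'x})=\lambda_{x'x}\,e_{f(x')f(x)}$ for some scalar $\lambda_{x'x}\in\C$. Since $\|e_{x'x}\|=1$ and $\Phi$ is isometric (being a $*$-isomorphism), $|\lambda_{x'x}|=1$.

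For the final assertion, every $T\in\ell_\infty(Y)$ can be written as $\sum_{y\in Y}\lambda_y e_{yy}$ (SOT-convergent), and using the bijectivity of $f$ we can rewrite this as $\sum_{x\in X}\lambda_{f(x)}e_{f(x)f(x)}$. Applying $\Phi^{-1}=\Ad U^*$, which is also SOT-continuous on bounded sets, yields $\Phi^{-1}(T)=\sum_{x\in X}\lambda_{f(x)}e_{xx}\in\ell_\infty(X)$. The main (mild) obstacle is the surjectivity of $f$, which is the only step where one has to leave the purely algebraic manipulation of matrix units and invoke a strong-operator continuity argument.
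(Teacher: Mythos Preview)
Your proof is correct and follows essentially the same approach as the paper. The only noticeable difference is in the surjectivity step: you invoke the spatial implementation and SOT-continuity of $\Ad U$ to push $\sum_x e_{xx}=1$ through $\Phi$, whereas the paper argues directly that if some $y\notin f(X)$ then $\Phi^{-1}(e_{yy})e_{xx}=0$ for all $x$, forcing $\Phi^{-1}(e_{yy})=0$; both arguments rest on the same underlying fact that the $e_{xx}$ sum to the identity.
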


\begin{proof}
(i)  Fix $x\in X$.  As $e_{xx}\in \ell_\infty(X)$, we have that $\Phi(e_{xx})\in \ell_\infty(Y)$. Since $e_{xx}$ has rank $1$, so does $\Phi(e_{xx})$ . Therefore, there exists $y_x\in Y$ and $\lambda_x\in \C\setminus \{0\}$ such that $\Phi(e_{xx})=\lambda_x e_{y_xy_x}$. Since $\sigma(e_{xx})=\{0,1\}$, it follows that $\sigma(\Phi(e_{xx}))=\{0,1\}$ and we must have $\lambda_x=1$. 
 
Define $f(x)=y_x$, for all $x\in X$, and let us show that $f$ is a bijection. Say $x\neq x'$. Then $e_{xx}e_{x'x'}=0$, so \[e_{f(x)f(x)}e_{f(x')f(x')}=\Phi(e_{xx}e_{x'x'})=0.\] Hence, $f(x)\neq f(x')$ and  $f$ is injective. Say $f$ is not surjective. Then, there exists $y\in Y$ such that $\Phi(e_{xx})\neq e_{yy}$, for all $x\in X$. Then, $e_{yy}e_{f(x)f(x)}=0$, for all $x\in X$. This implies that  $\Phi^{-1}(e_{yy})e_{xx}=0$, for all $x\in X$, so  $\Phi^{-1}(e_{yy})=0$. Since $\Phi^{-1}$ is a    isomorphism, this gives us a contradiction. This shows that  $f$ is a bijection. 

(ii) Let $f:X\to Y$ be the bijection in Item (i). Fix $x,x'\in X$. 
Using Item (i) and that $e_{x'x}^*=e_{ x x'}$,  $e_{x'x}=e_{x'x}e_{xx}$ and $e_{xx'}=e_{xx'}e_{x'x'}$,  we have that
\begin{align*}
\langle \Phi(e_{x'x})\delta_y,\delta_{y'} \rangle &=\langle \Phi(e_{x'x})e_{f(x)f(x)}\delta_y,\delta_{y'} \rangle\\
&=\langle e_{f(x)f(x)}\delta_y,\Phi(e_{xx'})\delta_{y'} \rangle\\
&=\langle e_{f(x)f(x)}\delta_y,\Phi(e_{xx'})e_{f(x')f(x')}\delta_{y'} \rangle.
\end{align*}
Since $\Phi(e_{x'x})\neq 0$, it follows that $\langle \Phi(e_{x'x})\delta_y,\delta_{y'} \rangle \neq 0$ if and only if  $y=f(x)$ and $y'=f(x')$. So, $\Phi(e_{x'x})=\lambda_{x'x} e_{f(x')f(x)}$, for some $\lambda_{x'x}\in \C$. At last, notice that $|\lambda_{x'x}|=\|\Phi(e_{x'x})\|=1$. 
\end{proof}

\begin{lemma}\label{LemmaBijU}
Let $(X,\mathcal{E})$ and $(Y,\mathcal{F})$ be uniformly locally finite coarse spaces and consider a    isomorphism $\Phi:\cstu(X)\to \cstu(Y)$ such that $\Phi(\ell_\infty(X))\subset\ell_\infty(Y)$. Let $U:\ell_2(X)\to\ell_2(Y)$ be a unitary operator such that $\Phi(T)=UTU^*$, for all $T\in \cstu(X)$. Let $f:X\to Y$ be the bijection given by Lemma \ref{LemmaBij}. Then, for all $x\in X$, there exists $\lambda\in \C$, with $|\lambda|=1$, such that $U(\delta_x)=\lambda\delta_{f(x)}$.
\end{lemma}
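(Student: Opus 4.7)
The plan is to use Lemma~\ref{LemmaBij}(i) together with the fact that $\Phi$ is spatially implemented by $U$ to identify two rank-one projections, and then to read off the conclusion from the equality of their ranges.

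First, I would fix $x \in X$ and compute the operator $U e_{xx} U^*$ directly on a vector $\xi \in \ell_2(Y)$. Since $e_{xx}\eta = \langle \eta,\delta_x\rangle \delta_x$ for all $\eta \in \ell_2(X)$, a one-line calculation gives
\[
U e_{xx} U^* \xi \;=\; U\bigl(\langle U^*\xi,\delta_x\rangle \delta_x\bigr) \;=\; \langle \xi, U\delta_x\rangle\, U\delta_x,
\]
which (because $U$ is unitary and hence $\|U\delta_x\|=1$) is precisely the rank-one orthogonal projection of $\ell_2(Y)$ onto the line $\C\cdot U\delta_x$.

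Next, invoking Lemma~\ref{LemmaBij}(i), we have $\Phi(e_{xx}) = e_{f(x)f(x)}$, and by hypothesis $\Phi(e_{xx}) = U e_{xx} U^*$. The operator $e_{f(x)f(x)}$ is by definition the rank-one projection onto $\C\cdot \delta_{f(x)}$. Two rank-one orthogonal projections in $B(\ell_2(Y))$ coincide exactly when their ranges coincide, so
\[
\C\cdot U\delta_x \;=\; \C\cdot \delta_{f(x)}.
\]
Hence there exists $\lambda \in \C$ with $U\delta_x = \lambda \delta_{f(x)}$. Taking norms and using that $U$ is unitary gives $|\lambda| = \|U\delta_x\| = \|\delta_x\| = 1$, which is the desired conclusion.

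There is no real obstacle here: the lemma is essentially a direct unpacking of what it means for the unitary $U$ to spatially implement an isomorphism that sends the rank-one projection $e_{xx}$ to the rank-one projection $e_{f(x)f(x)}$. The only point worth being careful about is that the scalar $\lambda$ depends on $x$ (the assignment $x \mapsto \lambda$ is in general only a function $X\to \T$, not globally constant), but this is exactly what the statement asserts.
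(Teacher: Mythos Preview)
Your proof is correct and follows essentially the same approach as the paper: both use Lemma~\ref{LemmaBij}(i) to identify $Ue_{xx}U^*$ with $e_{f(x)f(x)}$ and then read off that $U\delta_x$ lies in the line $\C\cdot\delta_{f(x)}$. The paper phrases this last step slightly differently (applying $e_{f(x)f(x)}$ to $U\delta_x$ rather than comparing ranges of rank-one projections), but the content is identical.
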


\begin{proof}
Fix $x\in X$. By Lemma \ref{LemmaBij}, $Ue_{xx}U^*=e_{f(x)f(x)}$, so  $U\delta_x=e_{f(x)f(x)}U\delta_x$, for all $x\in X$. This gives us that $U\delta_x$ is a multiple of $\delta_{f(x)}$. Since $U$ is an isometry, it follows that $U(\delta_x)=\lambda\delta_{f(x)}$, for some $\lambda\in \C$, with $|\lambda|=1$.
\end{proof}

\begin{thm}\label{ThmMainSpecialCase}
Let $(X,\mathcal{E})$ and $(Y,\mathcal{F})$ be uniformly locally finite coarse spaces and let   $\Phi:\cstu(X)\to \cstu(Y)$ be a    isomorphism such that $\Phi(\ell_\infty(X))\subset\ell_\infty(Y)$. For all  $E\in \mathcal{E}$, there exists $F\in \mathcal{F}$ such that, for all $T\in \cstu(X)$,
\[\supp(T)\subset E\ \ \text{implies}\ \ \supp(\Phi(T))\subset F.\]
In particular $\Phi(\cstu[X])\subset \cstu[Y]$.
\end{thm}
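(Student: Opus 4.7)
The plan is to exploit the rigid ``permutation-with-phases'' form that $\Phi$ must take when the assumption $\Phi(\ell_\infty(X))\subseteq \ell_\infty(Y)$ is imposed. By Lemma \ref{LemmaSpakulaWillett}, $\Phi$ is spatially implemented by a unitary $U\colon \ell_2(X)\to \ell_2(Y)$, and then Lemma \ref{LemmaBijU} produces a bijection $f\colon X\to Y$ together with scalars $(\lambda_x)_{x\in X}$ of modulus one such that $U\delta_x=\lambda_x\delta_{f(x)}$ for every $x\in X$.

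With this normal form in hand, a direct computation from $\Phi(T)=UTU^*$ shows that for every $T\in \cstu(X)$ and every $x,x'\in X$,
\[
\langle \Phi(T)\delta_{f(x)},\delta_{f(x')}\rangle = \overline{\lambda_x}\lambda_{x'}\,\langle T\delta_x,\delta_{x'}\rangle.
\]
Since $|\overline{\lambda_x}\lambda_{x'}|=1$, each side vanishes precisely when the other does, so $\supp(\Phi(T))=(f\times f)(\supp(T))$ for every $T\in \cstu(X)$. Consequently the theorem reduces to showing that $F\coloneqq (f\times f)(E)$ belongs to $\cF$ for every $E\in \cE$: granting this, $\supp(T)\subseteq E$ forces $\supp(\Phi(T))\subseteq F\in \cF$, and the ``in particular'' clause follows at once, since $\Phi(T)$ is then a bounded operator whose support lies in $\cF$, i.e.\ $\Phi(T)\in \cstu[Y]$.

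The main (and really only serious) obstacle is establishing this coarse-like property $(f\times f)(E)\in \cF$ for each $E\in \cE$. The idea is to detect $(f\times f)(E)$ entry-by-entry through the image of a single carefully chosen operator of $\cstu[X]$. Uniform local finiteness of $(X,\cE)$ together with Lemma \ref{LemmaSOTConv} guarantees that $V_E\coloneqq \sum_{(x,x')\in E} e_{x'x}$ converges strongly to an element of $\cstu[X]\subseteq \cstu(X)$ having $\supp(V_E)=E$ and matrix entries identically equal to $1$ on $E$. By the displayed identity above, the matrix entries of $\Phi(V_E)\in \cstu(Y)$ are scalars of modulus one at every point of $(f\times f)(E)$ and vanish elsewhere. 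Since $\cstu[Y]$ is norm-dense in $\cstu(Y)$, I then pick $S\in \cstu[Y]$ with $\|S-\Phi(V_E)\|<1/2$; this forces $|\langle S\delta_y,\delta_{y'}\rangle|>1/2$ for every $(y,y')\in (f\times f)(E)$, so $(f\times f)(E)\subseteq \supp(S)\in \cF$. Downward closure of the coarse structure then yields $(f\times f)(E)\in \cF$, completing the argument with $F=(f\times f)(E)$.
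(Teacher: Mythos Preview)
Your proof is correct and considerably more direct than the paper's. Both arguments rest on the same ingredients---Lemma~\ref{LemmaSpakulaWillett}, Lemma~\ref{LemmaBijU}, and the trick of approximating an element of $\cstu(Y)$ with modulus-one matrix entries by an element of $\cstu[Y]$ within distance less than $1$---but you organize them more efficiently. You first extract the global identity $\supp(\Phi(T))=(f\times f)(\supp(T))$ from the permutation-with-phases description of $U$, which reduces the theorem to the purely coarse statement $(f\times f)(E)\in\cF$; you then verify this in one stroke by applying $\Phi$ to the single operator $V_E\in\cstu[X]$ with support exactly $E$ and constant entries. The paper instead argues by contradiction: assuming the conclusion fails, it builds nets $(T^F)_{F\in\cF}$ witnessing failure, invokes Lemma~\ref{LemmaNetsCoarseStructure} to thin these to an injective family, assembles the corresponding rank-one pieces into a single operator, and derives a contradiction via the same approximation trick. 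Your route avoids Lemma~\ref{LemmaNetsCoarseStructure} and the net machinery entirely; the paper's route is more in parallel with the proof of Theorem~\ref{ThmAlgebraicMain}, where no analogue of the clean support identity is available and the contradiction-with-nets structure is genuinely needed.
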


\begin{proof}
Assume the conclusion of the theorem does not hold. Then, there exists $E\in\mathcal{E}$ such that for all symmetric entourage $F\in\mathcal{F}$ there exists $T^F\in \cstu(X)$ with $\supp(T^F)\subset E$ and $\supp(\Phi(T^F))\not\subset F$. We can assume that $\|T^F\|\leq 1$, for all $F\in\mathcal{F}$. For  $F\in \mathcal{F}$, pick $(y^F_2,y^F_1)\in \supp(\Phi(T^F))$ such that $(y^F_2,y^F_1)\not\in F$.  By continuity of $\Phi$ in the strong operator topology, Lemma \ref{LemmaSOTConv} and Lemma \ref{LemmaSpakulaWillett} imply  that \[\Phi(T^F)=\Phi\Big(\sum_{(x_1,x_2)\in E}T^F_{x_1x_2}\Big)=\sum_{(x_1,x_2)\in E}\Phi(T^F_{x_1x_2}).\]
For each $F\in\mathcal{F}$, pick $(x^F_1,x^F_2)\in E$ such that $(y^F_2,y^F_1)\in\supp(\Phi(T^F_{x^F_1x^F_2}))$.  Let $f:X\to Y$ be the bijection given in Lemma \ref{LemmaBij}. By Lemma \ref{LemmaBij},  $\supp(\Phi(T^F_{x^F_1x^F_2}))=\{(f(x^F_2),f(x^F_1))\}$, $y^F_1=f(x^F_1)$ and $y^F_2=f(x^F_2)$.

We make  $\mathcal{F}$ into a directed set by setting $F_1\preceq F_2$ if $F_1\subset F_2$. By Lemma \ref{LemmaNetsCoarseStructure}, we can pick a cofinal subset $I$ of $\mathcal{F}$, a subset $J$ of $ \mathcal{F}$ and a map $\varphi :I\to J$ such that
\begin{enumerate}[(i)]
\item $x_1^F\neq x^{F'}_1$ and $x_2^F\neq x^{F'}_2$, for all distinct $F,F'\in J$, and
\item $x^F_1=x^{\varphi(F)}_1$ and $x^F_2=x^{\varphi(F)}_2$, for all $F\in I$.
\end{enumerate} 

By Item (i) and Lemma \ref{LemmaSOTConv}, the sum \[\sum_{F\in J} e_{x^F_1x^F_2}\]
converges in the strong operator topology to an operator in  $\cstu[X]$. Let $(\lambda_{x^F_1x^F_2})_{F\in J}$ be given by Lemma \ref{LemmaBij}(ii). Then, as $\Phi$ is continuous in the strong operator topology, the sum
\[\sum_{F\in J}\Phi( e_{x^F_1x^F_2})=\sum_{F\in J} \lambda_{x^F_1x^F_2}e_{f(x^F_1)f(x^F_2)}\]
converges strongly to an operator $S$ in $\cstu(Y)$. Pick $S'\in \cstu[Y]$ such that $\|S-S'\|<1$. In particular, $\supp(S')\in\mathcal{F}$.

\begin{claim}\label{Claim:} $(f(x^F_2),f(x^F_1))\in \supp(S') $, for all $F\in J$.
\end{claim} 

\begin{proof} 
Notice that 
\[|\langle S\delta_y,\delta_{y'} \rangle|=\left\{\begin{array}{l l}
1, & \ \ \text{ if }(y,y')=(f(x^F_2),f(x^F_1))\ \ \text{ for some }\ \ F\in J,\\
0, &\ \ \text{ otherwise.}
\end{array}\right.\]
Let $F\in \mathcal{F}$. Since $\|S(\delta_{f(x^F_2)})-S'(\delta_{f(x^F_2)})\|<1$, we have that \[|\langle S(\delta_{f(x^F_2)})-S'(\delta_{f(x^F_2)}),\delta_{f(x^F_1)}\rangle|< 1.\]
This gives us that  $(f(x^F_2),f(x^F_1))\in \supp(S') $, and the claim is proved. 
\end{proof} 

Since $I$ is cofinal in $\mathcal{F}$, we can pick $F\in I$ such that $\supp(S')\subset F$. Fix such $F\in I$. By hypothesis, \[(f(x^F_2),f(x^F_1))=(y^F_2,y^F_1)\not\in F.\] Therefore, by Item (ii), we must have \[\Big(f(x^{\varphi(F)}_2),f(x^{\varphi(F)}_1)\Big)\not\in F.\]
Since $\varphi(F)\in J$ and $\supp(S')\subset F$, the claim 
above gives us a contradiction.
\end{proof}

\begin{proof}[Proof of (iii)$\Rightarrow$(i) of Theorem \ref{T.ThmMAIN}]
Let $f:X\to Y$ be the bijection given in Lemma \ref{LemmaBij}. By Lemma \ref{LemmaSpakulaWillett}, there exists a unitary isomorphism $U:\ell_2(X)\to \ell_2(Y)$ such that $\Phi(T)=UTU^*$, for all $T\in \cstu[X]$. By Lemma \ref{LemmaBijU}, we have that $\langle U\delta_x,\delta_{f(x)}\rangle\neq 0$, for all $x\in X$. Let $g=f^{-1}$.\\

\begin{claim}\label{Claim1y} $f$ and $g$ are coarse maps.
\end{claim}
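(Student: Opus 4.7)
The plan is to deduce the claim almost directly from Theorem \ref{ThmMainSpecialCase} combined with Lemma \ref{LemmaBij}. The key observation is that, since $\Phi(\ell_\infty(X))\subset \ell_\infty(Y)$, Lemma \ref{LemmaBij} guarantees that $\Phi^{-1}(\ell_\infty(Y))\subset \ell_\infty(X)$ as well, so Theorem \ref{ThmMainSpecialCase} applies to both $\Phi$ and $\Phi^{-1}$.

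First, I would show that $f$ is coarse. Fix $E\in\mathcal{E}$; replacing $E$ by $E\cup E^{-1}$, assume $E$ is symmetric. By Theorem \ref{ThmMainSpecialCase}, there exists $F\in\mathcal{F}$ such that $\supp(T)\subseteq E$ implies $\supp(\Phi(T))\subseteq F$ for all $T\in \cstu(X)$; replacing $F$ by $F\cup F^{-1}$, assume $F$ is symmetric. Given $(x,x')\in E$, consider the rank-one operator $e_{x'x}$, whose support equals $\{(x,x')\}$, hence is contained in $E$. By Lemma \ref{LemmaBij}(ii), $\Phi(e_{x'x})=\lambda_{x'x}e_{f(x')f(x)}$ for some unimodular $\lambda_{x'x}\in\C$, so $\supp(\Phi(e_{x'x}))=\{(f(x),f(x'))\}\subseteq F$. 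Thus $(f(x),f(x'))\in F$, which shows $f$ is coarse.

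For $g=f^{-1}$, I would run the symmetric argument. Since $\Phi^{-1}(\ell_\infty(Y))\subseteq \ell_\infty(X)$, Theorem \ref{ThmMainSpecialCase} applied to $\Phi^{-1}$ gives, for every symmetric $F\in\mathcal{F}$, a symmetric $E\in\mathcal{E}$ such that $\supp(S)\subseteq F$ implies $\supp(\Phi^{-1}(S))\subseteq E$. Applying Lemma \ref{LemmaBij} to $\Phi^{-1}$ (whose associated bijection is $f^{-1}=g$), the same computation as above, with the roles of $X$ and $Y$ exchanged, yields $(g(y),g(y'))\in E$ whenever $(y,y')\in F$. Hence $g$ is coarse.

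No genuine obstacle is expected here: all the real work has been done in Theorem \ref{ThmMainSpecialCase} and Lemma \ref{LemmaBij}, and the claim is merely the extraction of the coarseness of the bijection $f$ (and its inverse) from the fact that $\Phi$ moves rank-one matrix units $e_{x'x}$ to scalar multiples of $e_{f(x')f(x)}$ and preserves the support-boundedness in the coarse-like sense. The only minor subtlety is keeping track of the order of coordinates in $\supp(\cdot)$ so as to correctly identify $\supp(e_{x'x})=\{(x,x')\}$; symmetrising $E$ and $F$ at the outset makes this bookkeeping invisible.
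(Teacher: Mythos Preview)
Your proof is correct and follows essentially the same approach as the paper: both combine Theorem~\ref{ThmMainSpecialCase} with the fact that $\Phi$ sends $e_{x'x}$ to a nonzero multiple of $e_{f(x')f(x)}$. The only cosmetic difference is that you invoke Lemma~\ref{LemmaBij}(ii) directly, whereas the paper re-derives the relevant nonvanishing of $\langle \Phi(e_{xx'})\delta_{f(x')},\delta_{f(x)}\rangle$ by an explicit computation with the implementing unitary $U$ (a computation it then reuses in the next claim).
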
 

\begin{proof} 
By symmetry, it suffices to show that $f$ is coarse. Let $E\in\mathcal{E}$ and let $F\in\mathcal{F}$ be given by Theorem \ref{ThmMainSpecialCase}. Without loss of generality, we can assume that $F$ is symmetric. For all $x,x'\in X$, we have that 
\begin{align}\label{equality}
\langle \Phi(e_{xx'})\delta_{f(x')},\delta_{f(x)}\rangle &= \langle Ue_{xx'}U^*\delta_{f(x')},\delta_{f(x)}\rangle\\
&=\langle e_{xx'}U^*\delta_{f(x')},U^*\delta_{f(x)}\rangle\nonumber\\
&=\langle \langle \delta_{x'},U^*\delta_{f(x')}\rangle \delta_{x},U^*\delta_{f(x)}\rangle\nonumber\\
&=\langle \delta_{x},U^*\delta_{f(x)}\rangle\langle \delta_{x'},U^*\delta_{f(x')}\rangle\nonumber\\
&=\langle U\delta_{x},\delta_{f(x)}\rangle\langle U\delta_{x'},\delta_{f(x')}\rangle\nonumber
\end{align}
Therefore, by the definition of $f$, $\langle \Phi(e_{xx'})\delta_{f(x')},\delta_{f(x)}\rangle\neq 0$, for all $x,x'\in X$. Hence, if $(x,x')\in E$, this gives us that  \[(f(x'),f(x))\in \supp(\Phi(e_{xx'}))\subset F.\]
Since $F$ is symmetric, we are done.
\end{proof} 

\begin{claim}\label{Claim2y} $g\circ f$ and $f\circ g$ are close to $\Id_X$ and $\Id_Y$, respectively.
\end{claim}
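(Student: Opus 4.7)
The plan for this claim is to observe that the assertion is essentially a tautology given the way $g$ was defined earlier in the proof of (iii)$\Rightarrow$(i). Since we set $g = f^{-1}$, with $f:X\to Y$ the bijection produced by Lemma \ref{LemmaBij}, we have $g\circ f = \Id_X$ and $f\circ g = \Id_Y$ as literal set-theoretic equalities of functions, not just as approximate identities.

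Next, I would invoke the definition of closeness from Section~\ref{SectionBackground}: two maps $p,q\colon Z\to X$ are close when $\{(p(z),q(z))\mid z\in Z\}\in\cE$. Applied to the pair $(\Id_X,\Id_X)$, this set is $\Delta_X$, which lies in $\cE$ by axiom~(i) of Definition~\ref{DefiCoarseSpace}. Hence $g\circ f=\Id_X$ is close to $\Id_X$, and by an identical argument $f\circ g=\Id_Y$ is close to $\Id_Y$. Combined with Claim~\ref{Claim1y} this establishes that $f$ is a bijective coarse equivalence from $(X,\cE)$ to $(Y,\cF)$, finishing the implication (iii)$\Rightarrow$(i).

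There is no genuine obstacle at this stage; all of the substantive work was already carried out earlier. The content of the implication resides in Theorem~\ref{ThmMainSpecialCase} (the coarse-like control on $\Phi$), Lemma~\ref{LemmaBij} together with Lemma~\ref{LemmaBijU} (which extract a bijection $f$ from the hypothesis $\Phi(\ell_\infty(X))\subseteq \ell_\infty(Y)$ and match it with the unitary $U$), and Claim~\ref{Claim1y} (coarseness of $f$ and $g$). The only subtle point in the present claim is to recognize that we need not estimate anything: setting $g=f^{-1}$ converts the would-be closeness statement into the trivial observation that $\Delta_X\in\cE$ and $\Delta_Y\in\cF$.
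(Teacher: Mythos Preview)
Your argument is correct, and indeed simpler than the paper's. Since the surrounding proof explicitly sets $g=f^{-1}$ for the bijection $f$ of Lemma~\ref{LemmaBij}, the compositions $g\circ f$ and $f\circ g$ are literally the identity maps, and closeness reduces to $\Delta_X\in\cE$ and $\Delta_Y\in\cF$.

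The paper, by contrast, argues as in the proof of Theorem~\ref{T.ThmRigIsoImpliesCoarseEqui}: it invokes Theorem~\ref{ThmMainSpecialCase} applied to $\Phi^{-1}$ and $\Delta_Y$, then uses the matrix-coefficient identity~\eqref{equality} to show that $(y,f(g(y)))$ lies in the support of $\Phi(e_{g(y)g(y)})$, which in turn is controlled by a fixed entourage. This machinery is genuinely needed in the rigid-isomorphism setting of Section~\ref{SectionIsoRoeAlg}, where $f$ and $g$ are chosen independently via the condition $|\langle U\delta_x,\delta_{f(x)}\rangle|\geq\delta$ and are not inverses of one another. Here, however, the hypothesis $\Phi(\ell_\infty(X))\subset\ell_\infty(Y)$ has already forced $f$ to be a bijection, so the paper's argument is doing strictly more work than necessary. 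Your observation that the claim is a tautology in this context is the cleaner route.
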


\begin{proof} By symmetry, it suffices to prove that $f\circ g$ is close to $\Id_Y$. Let $E\in\mathcal{E}$ be given by Theorem \ref{ThmMainSpecialCase} applied to the    isomorphism $\Phi^{-1}:\cstu(Y)\to \cstu(X)$ and the diagonal $\Delta_Y\subset Y\times Y$.  By \ref{equality} above, we have that 
\[\langle \Phi(e_{g(y)g(y)})\delta_{y},\delta_{f(g(y))}\rangle =\langle U\delta_{g(y)},\delta_{f(g(y))}\rangle\langle U\delta_{g(y)},\delta_{y}\rangle,\]
for all $y\in Y$. Therefore, by the definition of $f$ and $g$, we get that
\[\langle \Phi(e_{g(y)g(y)})\delta_{y},\delta_{f(g(y))}\rangle =\langle U\delta_{g(y)},\delta_{f(g(y))}\rangle\langle \delta_{g(y)},U^*\delta_{y}\rangle\neq 0,\]
for all $y\in Y$. We conclude that  \[(y,f(g(y))\in  \supp(\Phi(e_{g(y)g(y)}))\subset E,\]
for all $y\in Y$. 
\end{proof}
This finishes the proof.
\end{proof}

\section{Appendix: Generically absolute isomorphisms} 
\label{S.Forcing} 
\setcounter{claim}{0}
In this appendix  some familiarity with models of set theory and absoluteness is desirable; see
for example  \cite[Section~I.16 and Section~II.5]{Ku:Set} or \cite{Jech:SetTheory}. 
Before  defining the  notion of a generically absolute isomorphism 
between uniform Roe algebras, we should point out that the absoluteness theorem for $\Pi^1_1$ statements (\cite{Jech:SetTheory}, Theorem  25.4) implies that 
every isomorphism between uniform Roe algebras associated to metric spaces is generically absolute. 
Therefore Theorem~\ref{T.ThmRigIsoImpliesCoarseEqui.absolute}
 below is a generalization of 
the instance of Theorem~\ref{T.ThmRigIsoImpliesCoarseEqui} for metric spaces. 

Suppose that $M\subseteq N$ are two transitive models of a large enough fragment of ZFC with the same set of ordinals. (Because of metamathematical 
considerations not directly relevant to our discussions, we cannot assume that ZFC is consistent and therefore have to work with 
a model of a large enough finite fragment of ZFC; see \cite[II]{Ku:Set} for an extensive discussion.)
Furthermore suppose that $(X,\cE)$ and $(Y,\cF)$ are coarse spaces,  $\Phi\colon \cstu(X)\to \cstu(Y)$ is an    isomorphism and all those objects are in $M$. 
 
Since $M\subseteq N$, all of these objects belong to $N$. However, they need not be objects of the required form. 
For example,  in $N$ the set $\cE$ is a collection of subsets of $X\times X$, and it satisfies (i), (iii), (iv), and (v) of Definition~\ref{DefiCoarseSpace}. 
(This is a consequence of the absoluteness of the notions involved in these axioms; see  \cite[II.4]{Ku:Set}.)
However, $N$ may contain a subset of $X$ that does not belong to $M$, and in this case (ii) of  Definition~\ref{DefiCoarseSpace} will fail 
for $\cE$. The way to remedy this issue is to take the coarse structure on $X$ generated by $\cE$ in the model $N$.

We take (writing $\cP(E)=\{E'\mid  E'\subseteq E\}$)
\[
\cE^N=\bigcup_{E\in \cE} \cP(E)
\]
 \emph{as computed in $N$}. Then we have that $(X, \cE^N)$ is a coarse space in $N$.\footnote{Purists may object our not distinguishing  $X^M$ from $X^N$ and using $X$
 to denote both sets; this is however the same set and we find writing $\ell_2(X)^N$  preferable to writing $\ell_2(X^N)^N$. The notation 
 $K(\ell_2(X)^N)^N$ appears to be a necessary evil.}

We proceed to define  interpretations of  other relevant objects in the model $N$. 
For   $\ell_2(X)^N$ we take the completion of  $\ell_2(X)^M$; this space has $\delta_x$, for $x\in X$, 
as an orthonormal basis and it clearly agrees with $\ell_2(X)$ as computed in $N$. 
The coarse space $(Y,\cF^N)$ and the Hilbert space $\ell_2(Y)^N$ are defined analogously. 
Then $U^M$ is a linear isometry between dense subspaces of 
$\ell_2(X)^N$ and $\ell_2(Y)^N$, and we let $U^N$ denote its continuous extension. 
This is a unitary. 

It remains to see how the uniform Roe algebras $\cstu(X)^N$ and $\cstu(Y)^N$ 
relate to the uniform Roe algebras $\cstu(X)^M$ and $\cstu(Y)^M$. 
A minor inconvenience is caused by the following two facts.  (We assume the `worst case scenario'' that $N$ contains a subset of $X$ which is not in $M$.) 
\begin{enumerate}
\item   The algebra $\ell_\infty(X)^N$ is equal the closure of $\ell_\infty(X)^M$ in the weak operator topology, 
and it is strictly lager than the closure of $\ell_\infty(X)^M$ in the norm topology. 
\item   The algebra $K(\ell_2(X)^N)^N$ is equal to the closure of $K(\ell_2(X)^M)^M$ in the norm topology. 
It is strictly \emph{smaller} than the closure of $K(\ell_2(X)^M)^M$ in the weak operator  topology. 
\end{enumerate}

Lemma~\ref{L.RoeCode} will provide us with  a recipe for how to compute $\cstu(X)^N$ directly from $(X,\cE^M)$. 
(The reason for the absence of the  superscripts ${}^M$ and ${}^N$ in the statement of Lemma~\ref{L.RoeCode} is that 
this lemma has nothing to do with models of  fragments of ZFC.)
The proof of the lemma is omitted, being an immediate consequence of the definition of $\cstu(X)$. 
  
\begin{lemma} \label{L.RoeCode} Suppose that $(X,\cE)$ is a uniformly locally discrete coarse space. 
For $E\in \cE$ the set 
\[
\cZ_E=\{T\in B(\ell_2(X)): \supp(T)\subseteq E\}. 
\]
is closed in the weak operator topology and contained in $\cstu(X)$. 
If $\cE_1\subseteq \cE$ is cofinal with respect to inclusion, then  
 $\cstu(X)=\overline{\bigcup_{E\in \cZ_1} \cZ_E}^{\|\cdot\|}$. \qed
 \end{lemma}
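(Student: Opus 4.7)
The plan is to verify the three assertions in sequence, each of which follows directly from the definitions together with the uniform local finiteness hypothesis.

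First I would check that $\cZ_E$ is weak operator closed. Suppose $(T_\lambda)$ is a net in $\cZ_E$ converging to $T\in B(\ell_2(X))$ in the weak operator topology. For any $(x,x')\not\in E$ and every $\lambda$ we have $\langle T_\lambda \delta_x,\delta_{x'}\rangle=0$, and passing to the limit in the weak operator topology gives $\langle T\delta_x,\delta_{x'}\rangle=0$. Hence $\supp(T)\subseteq E$, i.e., $T\in \cZ_E$.

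Next I would check that $\cZ_E\subseteq \cstu(X)$. Any $T\in \cZ_E$ is by definition a bounded operator on $\ell_2(X)$, and the matrix entries $T_{xx'}=\langle T\delta_{x'},\delta_x\rangle$ satisfy $\sup_{x,x'}|T_{xx'}|\leq \|T\|$ together with $T_{xx'}=0$ whenever $(x,x')\not\in E$. By the discussion following the definition of $\cstu[X]$ in Section~\ref{SubsectionUnformRoeAlg}, any family of matrix coefficients satisfying these two conditions defines an element of $\cstu[X]$. Therefore $T\in \cstu[X]\subseteq \cstu(X)$.

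Finally I would verify the cofinality identity. Since $\cZ_E\subseteq \cstu(X)$ for every $E\in \cE$ and $\cstu(X)$ is norm closed, the inclusion
\[
\overline{\bigcup_{E\in \cE_1}\cZ_E}^{\|\cdot\|}\subseteq \cstu(X)
\]
is immediate. For the reverse inclusion, by the definition of $\cstu(X)$ it suffices to show that $\cstu[X]\subseteq \bigcup_{E\in \cE_1}\cZ_E$. If $S\in \cstu[X]$, then there exists $E'\in \cE$ with $\supp(S)\subseteq E'$, and by cofinality of $\cE_1$ there exists $E\in \cE_1$ with $E'\subseteq E$, so $S\in \cZ_E$. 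Taking norm closures yields the claimed equality.

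No genuine obstacle is anticipated; the only point requiring attention is to invoke the correct form of the observation that a family $(T_{xx'})$ supported in an entourage of a uniformly locally finite coarse space and bounded in $\sup$-norm automatically defines a bounded operator in $\cstu[X]$, which is exactly where uniform local finiteness enters.
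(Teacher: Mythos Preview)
Your proof is correct and matches the paper's approach; the paper actually omits the proof entirely, stating only that it is ``an immediate consequence of the definition of $\cstu(X)$,'' and your verification from the definitions is exactly what is intended. One minor simplification: in the second step you can observe that $\cZ_E\subseteq\cstu[X]$ is literally the definition of $\cstu[X]$ (a bounded operator with support contained in some entourage), so there is no need to pass through the matrix-coefficient discussion or to invoke uniform local finiteness explicitly there.
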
 
 
Back to interpreting notions in models $M$ and $N$, we note that 
for $E\in \cE^M$, the set $(\cZ_E)^N$ is equal 
to the closure of $(\cZ_E)^M$ in the weak operator topology. 
We can now define
\[
\cstu(X)^N=\overline{\bigcup_{E\in \cE^M} (\cZ_E)^N}^{\|\cdot\|}
\]
where the norm closure is computed in $N$. Since $\cE^M$ is cofinal in $\cE^N$,  Lemma~\ref{L.RoeCode} implies that 
$\cstu(X)^N$ defined in this way  coincides with the uniform Roe algebra of the coarse space $(X,\cE^N)$ as computed in $N$.

 Suppose that $\Phi\colon \cstu(X)\to \cstu(Y)$ is a    isomorphism. 
Then 
$X,Y,\cstu(X), \cstu(Y)$,  and $\Phi$ all belong to a large enough rank initial segment 
$R(\theta)$ (commonly denoted $V_\theta$; we use Kunen's notation) 
 of von Neumann's cumulative universe for set theory.  
Let $M_0$ be a countable elementary submodel of $R(\theta)$ 
containing 
$X,Y,\cstu(X), \cstu(Y), \Phi$, and the unitary $U$ implementing $\Phi$, 
 and let $M$ denote the Mostowski collapse of $M_0$. 
This is a countable transitive model isomorphic to $M_0$, 
and it contains copies 
$X^M,Y^M,\cstu(X)^M, \cstu(Y)^M, \Phi^M$, and $U^M$ 
of the above objects. (This time we write $X^M$ because $X$ may not belong to $M$.)
By elementarity, $M$ will satisfy the assertion 
\[
(\forall T\in B(\ell_2(X))) T\in \cstu(X)\Leftrightarrow UTU^*\in \cstu(Y). 
\]
We proceed to describe how an extension $N$ of a model $M$ that will serve our purpose is obtained. 
 Suppose that   $J$ is a set in $M$.  Observe that $\D^M$  is a countable dense subset of $\D$
 and that $(\D^J)^M$ is a countable dense subset of $\D^J$.  Therefore if $A\in M$ and  
 $A\subseteq (\D^J)^M$, then $A$ is nowhere dense in $\D^J$ if and only if the assertion 
 `$A$ is nowhere dense in $\D^J$' holds in  $M$. 

By the Baire category theorem, we can choose $G\in \D^J$ such that $G$ does not belong to the closure
 of any nowhere dense subset of $\D^J$ that belongs to $M$. 
 Such   $G$ is said to be  \emph{generic over $M$}. Then $G$ is generic (in the technical sense from the theory of forcing) for the poset 
 of all nonempty open subsets of $\D^J$ ordered by the inclusion.  
  A transitive model $M[G]$ that contains $G$ and includes $M$  can be formed
 as in \cite[IV.2]{Ku:Set}.

\begin{defi} \label{Def.Generically}. 
An isomorphism $\Phi\colon \cstu(X)\to \cstu(Y)$ 
implemented by a unitary $U$
 is  generically absolute
  if and only if  for all $M$ and $M[G]$ as in the previous paragraph, 
 $U$  implements an isomorphism between $\cstu(X)$ and $\cstu(Y)$.
\end{defi} 

A proof of the following lemma is now straightforward. 

\begin{lemma} 
A    isomorphism $\Phi\colon \cstu(X)\to \cstu(Y)$ 
between uniform Roe algebras of 
coarse spaces is generically absolute  if and only if  it  satisfies the conclusion of Lemma~\ref{LemmaCoarseSpakulaWillett}. 
More precisely, there is a function $f\colon \cE\times \N\to \cF$ such that 
for all $E\in \cE$,  every $n\geq 1$, and every  $T\in \cstu(X)$ such that $\supp(T)\subseteq E$
there exist $S\in \cstu(Y)$ such that $\supp(S)\subseteq f(E,n)$ and 
$\|\Phi(T)-S\|<1/n$. \qed
\end{lemma}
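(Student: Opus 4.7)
The plan is to prove both implications by Baire-category/absoluteness arguments patterned on the proof of Theorem \ref{T.Approximation}. Fix an isomorphism $\Phi\colon\cstu(X)\to\cstu(Y)$ implemented by a unitary $U$. For $E\in\cE$ set $J=E$ and for $\lambda\in\D^J$ put $T_\lambda=\sum_{(x,x')\in J}\lambda_{(x,x')}e_{xx'}$; this converges strongly to an element of $\cstu[X]$ with support in $E$ by uniform local finiteness of $(X,\cE)$. Throughout, let $M$ be a countable transitive model (obtained by Mostowski collapse of a countable elementary submodel of some $R(\theta)$) containing $X, Y, \cE, \cF, \Phi, U$ and any further relevant parameters. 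Since $M$ is countable and transitive, every set in $M$---in particular every entourage in $\cE^M\cup\cF^M$ and the whole of $Y$ itself---is countable.

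For the sufficiency direction, assume the coarse-like property holds with witness $f\colon\cE\times\N\to\cF$ and, by reflection, include $f$ in $M$. Let $G$ be generic over $M$ for the forcing from Definition \ref{Def.Generically}. Fix $T\in\cstu(X)^{M[G]}$; by norm density we may assume $\supp(T)\subseteq E$ for some $E\in\cE^M$. For each $n\geq 1$ set $F_n=f(E,n)\in\cF^M$. Because $F_n$ is countable and $\ell_2(Y)$ is separable, the subset
\[
K_n=\{S\in B(\ell_2(Y))\colon\supp(S)\subseteq F_n,\ \|S\|\leq \|\Phi(T)\|+1/n\}
\]
is compact metric in the weak operator topology (i.e.\ Polish), and the map $S\mapsto\|\Phi(T)-S\|$ is WOT-lower-semicontinuous. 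Hence $\{S\in K_n\colon\|\Phi(T)-S\|\leq 1/(2n)\}$ is a closed subset of the compact Polish space $K_n$ whose Borel code lies in $M[G]$, and it is nonempty in $V$ by the coarse-like property. Nonemptiness of a closed subset of a compact Polish space, coded in $M[G]$, is absolute between $V$ and $M[G]$ (by the standard finite-intersection / tree-branching argument), so the set contains an $S_n\in M[G]$ with $\supp(S_n)\subseteq F_n$ and $\|\Phi(T)-S_n\|\leq 1/(2n)$. Thus $\Phi(T)\in\cstu(Y)^{M[G]}$, and a symmetric argument applied to $\Phi^{-1}$ gives generic absoluteness.

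For the necessity direction, assume $\Phi$ is generically absolute but the coarse-like property fails: some $E\in\cE$ and $n\geq 1$ admit, for each $F\in\cF$, an operator $T$ with $\supp(T)\subseteq E$ and $\|T\|\leq 1$---which after rescaling is of the form $T_\lambda$ for some $\lambda\in\D^J$---such that $\Phi(T_\lambda)$ is not $(1/n)$-$F$-approximable. Reproducing the $(\ast)\Rightarrow(\ast\ast)$ step of the proof of Lemma \ref{L.Approx} produces $\epsilon>0$ such that for every $F\in\cF$ the set
\[
U_F = \{\lambda\in\D^J\colon\Phi(T_\lambda)\text{ is }\epsilon\text{-}F\text{-approximable}\}
\]
is closed and nowhere dense in $\D^J$; closedness uses the WOT-compactness of the bounded candidate-approximator set, SOT-continuity of $\lambda\mapsto\Phi(T_\lambda)$, and WOT-lower-semicontinuity of the operator norm, while emptiness of interior is precisely the argument of the Claim inside the proof of Lemma \ref{L.Approx}. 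Then $\{U_F\colon F\in\cF^M\}$ is a countable family of nowhere-dense closed subsets of $\D^J$ coded in $M$, so any $G\in\D^J$ generic over $M$ satisfies $G\not\in\bigcup_{F\in\cF^M}U_F$. However $T_G\in\cstu(X)^{M[G]}$ (its support lies in $E\in\cE^M\subseteq\cE^{M[G]}$), so generic absoluteness of $\Phi$ provides $F\in\cF^M$ and $S\in\cstu(Y)^{M[G]}$ with $\supp(S)\subseteq F$ and $\|\Phi(T_G)-S\|<\epsilon$; this places $G\in U_F$, a contradiction.

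The principal obstacle is the verification that each $U_F$ is a closed nowhere-dense set coded in $M$: closedness is the compactness-plus-LSC argument sketched above, while nowhere-denseness is a direct line-by-line transcription of the Baire-category argument of Lemma \ref{L.Approx}. The sufficiency step reduces to the absoluteness of nonemptiness of a closed subset of a compact Polish space---itself a routine consequence of the countability and transitivity of $M$, which forces both $F_n$ and $Y$ to be countable and hence the candidate space to be compact metric.
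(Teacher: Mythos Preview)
The paper gives no proof for this lemma beyond the remark ``A proof of the following lemma is now straightforward,'' so there is nothing to compare against line-by-line. Your elaboration is in the spirit of the appendix and is essentially correct; both directions follow the template the appendix sets up (pass to a countable elementary submodel $M$, force with basic open subsets of $\D^J$, and exploit that the machinery of Lemma~\ref{L.Approx} up to and including the Claim uses no smallness hypothesis).

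Two points deserve tightening. First, in the necessity direction you need the witnesses $E\in\cE$ and $\epsilon$ (or $n$) for the failure of the coarse-like property to lie in $M$; this is immediate from elementarity of $M_0\prec R(\theta)$, but you should say so, since only then is $J=E\in M$ and only then are the parameters defining each $U_F$ available to $M$. Relatedly, the sets $U_F$ themselves are uncountable and hence not literally elements of $M$; what you actually use is that the dense set of conditions $\{p:p\subseteq U_F^c\}$ is definable from parameters in $M_0$, and elementarity transfers both its density and the implication $p\subseteq U_F^c$ back to $V$. Second, in the sufficiency direction your absoluteness argument is correct but heavier than necessary: since $(\cZ_E)^M$ is WOT-dense in $(\cZ_E)^{M[G]}$ and $f\in M$ witnesses the coarse-like property there, one can take WOT-limits of approximants (using that $\cZ_{f(E,n)}$ is WOT-closed and the norm is WOT-lower-semicontinuous) to get the approximation directly inside $M[G]$, bypassing the Shoenfield-type step. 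Either route works.

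A minor caveat on the statement itself: as written (with no bound on $\|T\|$), the $f$-property is, by rescaling, equivalent to $\Phi(\cZ_E)\subseteq\cZ_{f(E,1)}$ for every $E$---the strong coarse-like property the paper explicitly says it does not know for isomorphisms (see the remark after Example~\ref{Ex.Approximate}). The intended reading, consistent with the reference to Lemma~\ref{LemmaCoarseSpakulaWillett} and with how the lemma is used in Theorem~\ref{T.ThmRigIsoImpliesCoarseEqui.absolute}, is the normalized version (contractions $T$), and that is what your argument actually establishes.
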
 

The proof of Theorem~\ref{T.ThmRigIsoImpliesCoarseEqui} 
in which the smallness assumption is replaced by the assumption that $\Phi$ be generically absolute gives the following.

\begin{theorem} \label{T.ThmRigIsoImpliesCoarseEqui.absolute}
 Suppose that $(X,\cE)$ and $(Y,\cF)$ are  uniformly locally finite coarse spaces. 
If  $ \cstu(X)$ and   $\cstu(Y)$ are rigidly    isomorphic via a generically absolute isomorphism,  
then $X$ and $Y$ are coarsely equivalent. \qed
\end{theorem}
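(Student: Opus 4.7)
The strategy is to mimic the proof of Theorem~\ref{T.ThmRigIsoImpliesCoarseEqui}, with generic absoluteness of $\Phi$ taking the place of the smallness hypothesis. The lemma immediately preceding the statement supplies precisely the approximation conclusion of Theorem~\ref{T.Approximation}: a function $f\colon \cE\times\N\to\cF$ such that for every $E\in\cE$, every $n\ge 1$, and every $T\in\cstu(X)$ with $\supp(T)\subseteq E$, the image $\Phi(T)$ admits a $(1/n)$-$f(E,n)$-approximation. Since $\Phi^{-1}$, implemented by the unitary $U^*$, is also a generically absolute    isomorphism, the same approximation property holds for $\Phi^{-1}$.

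Next, I would re-run the proof of Lemma~\ref{LemmaCoarseSpakulaWillett} without invoking smallness. Inspection of that proof shows that the hypothesis "small" is used only to apply Theorem~\ref{T.Approximation} once; everything else---the pigeonhole and refinement via Lemma~\ref{LemmaNetsCoarseStructure}, the strong-operator convergence of sums of rank-one operators $e_{x^F_1 x^F_2}$ to an element of $\cstu(X)$, and the final contradiction between the lower bound $\delta^2$ on $|\langle \delta_{a(y)},U\delta_{a(x)}\rangle\langle U\delta_{b(x)},\delta_{b(y)}\rangle|$ and the upper bound coming from the approximation---relies only on uniform local finiteness. Substituting the approximation property above for that single appeal to Theorem~\ref{T.Approximation} therefore yields, for every $E\in\cE$ and $\delta>0$, that the set
\[
F_{E,\delta}=\{(y_1,y_2)\in Y^2\mid (\exists (x_1,x_2)\in E)(|\langle U\delta_{x_1},\delta_{y_1}\rangle|\ge\delta\wedge|\langle U\delta_{x_2},\delta_{y_2}\rangle|\ge\delta)\}
\]
belongs to $\cF$, with no smallness assumption.

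With this form of Lemma~\ref{LemmaCoarseSpakulaWillett} in hand, the proof of Theorem~\ref{T.ThmRigIsoImpliesCoarseEqui} transfers verbatim: rigidity of $\Phi$ produces $\delta>0$ and maps $f\colon X\to Y$ and $g\colon Y\to X$ with $|\langle U\delta_x,\delta_{f(x)}\rangle|\ge\delta$ and $|\langle U^*\delta_y,\delta_{g(y)}\rangle|\ge\delta$; the lemma, applied to $\Delta_X$ and $\delta$, shows that $f$ and $g$ are coarse; and a second application, combined with $(g(y),g(y))\in\Delta_X$ and $|\langle U\delta_{g(y)},\delta_y\rangle|=|\langle \delta_{g(y)},U^*\delta_y\rangle|\ge\delta$, yields $(y,f(g(y)))\in F_{\Delta_X,\delta}$, so $f\circ g$ is close to $\Id_Y$, and symmetrically $g\circ f$ is close to $\Id_X$. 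The main obstacle in this plan is identifying the correct substitute for smallness, namely the approximation property of Theorem~\ref{T.Approximation}; once this is extracted from generic absoluteness (as done in the preceding lemma), the rest of the argument is purely a transcription, and uniform local finiteness suffices throughout.
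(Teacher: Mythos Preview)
Your proposal is correct and follows exactly the approach the paper indicates: the preceding lemma provides the approximation conclusion of Theorem~\ref{T.Approximation} directly from generic absoluteness, after which the proofs of Lemma~\ref{LemmaCoarseSpakulaWillett} and Theorem~\ref{T.ThmRigIsoImpliesCoarseEqui} transfer verbatim, using only uniform local finiteness. Your observation that $\Phi^{-1}=\Ad U^*$ is also generically absolute (immediate from the symmetry of Definition~\ref{Def.Generically}) is the one point the paper leaves implicit, and it is needed to run the argument in both directions.
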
 

We do not know whether it is possible to construct an isomorphism between uniform Roe algebras that is not generically  absolute.

\begin{acknowledgments}
The authors  would like to thank R. Willett for  suggesting the problem of rigidity of  uniform Roe algebras for not necessarily metrizable coarse spaces and   useful conversations. The authors would also like to thank the Fields Institute 
for allowing them to make use of its facilities. 
Some final touches were added after the very stimulating 
``Approximation Properties in Operator Algebras and Ergodic Theory''
workshop at UCLA/IPAM. IF would like to thank 
Hanfeng Li and Wouter van Limbeek for stimulating conversations and 
the organizers of the workshop for the invitation. 
\end{acknowledgments}

\end{document}